\newtheorem{definition}{Definition}[section]
\newtheorem{theorem}[definition]{Theorem}
\newtheorem{lemma}[definition]{Lemma}
\newtheorem{corollary}[definition]{Corollary}
\newtheorem{remark}[definition]{Remark}
\newtheorem{proposition}[definition]{Proposition}
\begin{document} 

\title{\bf Projective geometries,  $Q$-polynomial \\structures,
and quantum groups
}
\author{
Paul Terwilliger}
\date{}

\maketitle
\begin{abstract} In 2023 we obtained a $Q$-polynomial structure for the projective geometry $L_N(q)$. In the present paper,
we display a more general $Q$-polynomial structure for $L_N(q)$. Our new  $Q$-polynomial structure is defined using a free parameter $\varphi$ that takes any positive real value.
For $\varphi=1$ we recover the original $Q$-polynomial structure.
We interpret the new $Q$-polynomial structure using the quantum group $U_{q^{1/2}}(\mathfrak{sl}_2)$ in the equitable presentation.
We use the new $Q$-polynomial structure to obtain analogs of the four split decompositions that appear in the theory
of $Q$-polynomial distance-regular graphs.
\medskip

\noindent
{\bf Keywords}.  $Q$-polynomial property; projective geometry; quantum group; tridiagonal relations.
\hfil\break
\noindent {\bf 2020 Mathematics Subject Classification}.
Primary: 05E30. Secondary: 05C50, 06C05.
 \end{abstract}
 
\section{Introduction} Given a finite undirected graph $\Gamma$, 
one often studies the eigenvalues of its adjacency matrix. Of interest is the interaction between these eigenvalues and the combinatorial structure of $\Gamma$;
see for example the 1974 book \cite{biggs}  by N. Biggs. In that book, one of the topics is a type of graph said to be distance-regular \cite[p.~155]{biggs}.
The distance-regular graphs have a certain combinatorial
regularity, that makes the eigenvalues and eigenspace dimensions relatively easy to compute. Two examples of distance-regular graphs are
the Hamming graphs \cite[Section~2.10.2]{bbit}, \cite[Section~9.2]{bcn} and the Johnson graphs \cite[Section~2.10.3]{bbit}, \cite[Section~9.1]{bcn}.
\medskip

\noindent
In his 1973 thesis \cite{delsarte} about association schemes and coding theory, P. Delsarte introduced the $Q$-polynomial property for distance-regular graphs.  By Delsarte's definition \cite[Section~5.3]{delsarte}, a $Q$-polynomial distance-regular graph comes with two sequences of orthogonal polynomials that are related by what is now
called Delsarte duality \cite{leonard} or Askey-Wilson duality \cite[p.~261]{bannai}. Delsarte showed that the Hamming and Johnson graphs are $Q$-polynomial, and he described the corresponding orthogonal polynomials \cite[Chapter~4]{delsarte}.
\medskip

\noindent
In their 1985 book \cite[Section~3.6]{bannai}, E.~Bannai and T.~Ito demonstrated  that all the known infinite families of primitive distance-regular graphs with unbounded diameter are $Q$-polynomial.
Based on this evidence, they  conjectured that any primitive distance-regular graph is $Q$-polynomial if the diameter is sufficiently large \cite[p.~312]{bannai}. They also proposed
the classification of the $Q$-polynomial distance-regular graphs \cite[p.~xiii]{bannai}.  To aid in this classification, they gave a detailed version of a result of D.~Leonard \cite{leonard}  that classifies
 the pairs of orthogonal polynomial sequences that satisfy Askey-Wilson duality \cite[Theorem~5.1]{bannai}. For a modern treatment of the Leonard theorem, see \cite[Chapter~6]{bbit} and \cite{LS99,LSnotes}.
\medskip

\noindent  The $Q$-polynomial property has multiple characterizations, such as the cometric condition \cite[Theorem~5.16]{delsarte}, \cite[Definition~11.1]{int}, the balanced set condition \cite[Theorem~1.1]{QPchar}, \cite[Section~19]{int}, and the Pascasio condition \cite[Theorem~1.2]{pascasio}, \cite[Section~17]{int}.
Each of these conditions involves a different point of view.
In the present paper, we adopt the following point of view.
Assume that $\Gamma$ is distance-regular.  Pick a vertex $x$ of $\Gamma$ and consider the subconstituents of $\Gamma$ with respect to $x$ \cite[Section~2]{int}.
By the triangle inequality, the adjacency matrix $A$ of $\Gamma$ acts on these subconstituents in a block-tridiagonal fashion.
We now add the assumption that $\Gamma$ is $Q$-polynomial.
 It was shown in \cite[Section~13]{int} that for each vertex $x$ of $\Gamma$, there exists a diagonal matrix $A^*=A^*(x)$ such that the eigenspaces of $A^*$ are the subconstituents of $\Gamma$ with respect to $x$, and
 $A^*$ acts on the eigenspaces of $A$ in a block-tridiagonal fashion. In summary, each of $A, A^*$ acts on the eigenspaces of the other one in a block-tridiagonal fashion.
The matrix $A^*$ is called the dual adjacency matrix of $\Gamma$
with respect to $x$ \cite[Section~2]{ds}. For information about $A^*$ and related topics, see \cite{bbit, augIto, tSub1,tSub2,tSub3, someAlg, qSerre}.
\medskip

\noindent Motivated by the dual adjacency matrix point of view, in \cite[Section~20]{int} we generalized the $Q$-polynomial property in three directions: (i) we drop the assumption that $\Gamma$ is distance-regular;
(ii) we drop the assumption that every vertex of $\Gamma$ has a dual adjacency matrix, and instead require that one distinguished vertex of $\Gamma$ has a dual adjacency matrix; (iii) we replace
the adjacency matrix by a weighted adjacency matrix.
\medskip

\noindent We comment on the definition of a weighted adjacency matrix. According to \cite[Definition~2.1]{Lnq}, a weighted adjacency matrix is obtained from the adjacency matrix by replacing
each nonzero entry by an arbitrary nonzero scalar. In the present paper, we will relax this definition a bit. In the present paper,  a weighted adjacency matrix is obtained from the adjacency
matrix by (i) replacing each nonzero entry by an arbitrary nonzero scalar, and (ii) replacing each diagonal entry by an arbitrary scalar.
\medskip

\noindent  We now describe some graphs that are not distance-regular, but are $Q$-polynomial in the above generalized sense.
In this description we identify a poset with  its Hasse diagram, which we view as an undirected graph.
In \cite{Lnq} we displayed a $Q$-polynomial structure for the projective geometry $L_N(q)$.
In \cite{Atten} we displayed a $Q$-polynomial structure for the attenuated space poset $\mathcal A_q(N,M)$. In \cite{fullBip} the authors Fern\'{a}ndez, Maleki, Miklavi\v{c}, and Monzillo
obtained a $Q$-polynomial structure for the full bipartite graph of the Hamming graph.
We expect a similar $Q$-polynomial structure for the full bipartite graph of a dual polar graph.
\medskip

\noindent In the present paper, we revisit the projective geometry $L_N(q)$. We have three main results. In our first main result, we display a $Q$-polynomial structure for $L_N(q)$ that is more general than the one in \cite{Lnq}. Our new
$Q$-polynomial structure is defined using a free parameter $\varphi$ that takes any positive real value. For $\varphi=1$ we recover the  $Q$-polynomial structure from \cite{Lnq}. 
In our second main result, we interpret our new $Q$-polynomial structure using the quantum group $U_{q^{1/2}}(\mathfrak{sl}_2)$ in the equitable presentation. 
In our third main result, we use our new $Q$-polynomial structure to obtain analogs of the four split
decompositions that appear in the theory of $Q$-polynomial distance-regular graphs \cite{ds, kim1,kim2,qtet}.
\medskip

\noindent 
We will summarize our main results in more detail, after some brief definitions.
  Let $GF(q)$ denote a finite field with cardinality $q$. Fix an integer $N\geq 1$. Let $\bf V$ denote a vector space over $GF(q)$
that has dimension $N$. Let the set $X$ consist of the subspaces of $\bf V$.  The set $X$, together with the inclusion relation,
is a poset denoted $L_N(q)$.
 We distinguish the zero subspace ${\bf 0} \in X$. We define two matrices $R, L  \in {\rm Mat}_X(\mathbb C)$ with the following entries.
 Let $y,z \in X$.  The matrix $R$ has $(y,z)$-entry $1$ (if $y$ covers $z$), and $0$ (if $y$ does not cover $z$). 
  The matrix $L$ has $(y,z)$-entry $1$ (if $z$ covers $y$), and $0$ (if $z$ does not cover $y$). We call $R$ (resp. $L$) the raising matrix (resp. lowering matrix) for $L_N(q)$.
 \medskip
 
 \noindent We now summarize our first main result.
 We define two matrices $A, A^* \in {\rm Mat}_X(\mathbb C)$ as follows. The matrix $A^*$ is diagonal, with $(y,y)$-entry $q^{-{\rm dim}\,y}$ for all $y\in X$.  Define
 \begin{align*}
A = R + \varphi (A^*)^{-1} L+ \frac{\varphi-1}{q-1}(A^*)^{-1}.
\end{align*}
By construction, $A^*$ has $N+1$  eigenvalues 
\begin{align*}
\theta^*_i = q^{-i} \qquad \qquad (0 \leq i \leq N).
\end{align*}
 By construction, $A$
acts on the eigenspaces of $A^*$ in a block-tridiagonal fashion.
We show that $A$ is diagonalizable, with $N+1$ eigenvalues
\begin{align*}
\theta_i = \frac{\varphi q^{N-i}-q^i}{q-1}  \qquad \qquad (0 \leq i \leq N).
\end{align*}
 We show that $A^*$ acts on the  eigenspaces of $A$ in a block-tridiagonal fashion. Using these facts, we show that $A$ is $Q$-polynomial
  with respect to $\bf 0$, in the sense of Definition  \ref{def:AQpoly} below. 
  \medskip
  
  \noindent We now summarize our second main result. It has to do with the quantum group $U_{q^{1/2}}(\mathfrak{sl}_2)$ in the equitable presentation \cite[Theorem~2.1]{equit}.
In this presentation, the algebra
$U_{q^{1/2}}({\mathfrak{sl}_2})$ is defined by generators
$\mathcal X$, $\mathcal Y$, $\mathcal Y^{-1}$, $\mathcal Z$ and relations $\mathcal Y\mathcal Y^{-1} = \mathcal Y^{-1}\mathcal Y = 1$, 
\begin{align*}
\frac{q\mathcal X \mathcal Y-\mathcal Y \mathcal X}{q-1} = 1,
\qquad \quad
\frac{q  \mathcal Y \mathcal Z- \mathcal Z \mathcal Y}{q-1} = 1,
\qquad \quad
\frac{q \mathcal Z \mathcal X- \mathcal X \mathcal Z}{q-1} = 1. 
\end{align*}
   Define
\begin{align*}
A^+ =  \frac{(A^*)^{-1}}{q-1}-R, \qquad \qquad
A^- =  \frac{(A^*)^{-1}}{q-1} +(A^*)^{-1} L
\end{align*}
and note that
$A =  \varphi A^{-}-A^+$.
We show that
\begin{align*}
&q A^-  A^* - A^* A^- = I, \qquad \qquad  q A^* A^+  -A^+ A^*= I, \\
&   q A^+ A^-   -A^- A^+ = \frac{q^N I}{q-1}. 
\end{align*}
Using these relations, we show that the standard module $V$ for $L_N(q)$ becomes a $U_{q^{1/2}} (\mathfrak{sl}_2)$-module on which the generators $\mathcal X$, $\mathcal Y$, $\mathcal Y^{-1}$, $\mathcal Z$ 
act as follows:
\begin{align*} 
\begin{tabular}[t]{c|cccc}
{\rm generator $g$ }& $\mathcal X$ & $\mathcal Y$ & $\mathcal Y^{-1}$ & $\mathcal Z$
 \\
 \hline
 {\rm action of $g$ on $V$} & $(q-1)q^{-N/2} A^-$ & $q^{N/2}A^*$ & $q^{-N/2} (A^*)^{-1}$ & $(q-1)q^{-N/2} A^+$
    \end{tabular}
\end{align*}
Next we define
\begin{align*}
A_+ =  \frac{(A^*)^{-1}}{q-1}+\varphi^{-1} R, \qquad \qquad
A_- =  \frac{(A^*)^{-1}}{q-1} -\varphi (A^*)^{-1} L
\end{align*}
and note that
$A =  \varphi A_+- A_-$.
We show that
\begin{align*}
&q A_-  A^* - A^* A_- = I, \qquad \qquad  q A^* A_+  -A_+ A^*= I, \\
&   q A_+ A_-   -A_- A_+ = \frac{q^N I}{q-1}.
\end{align*}
Using these relations, we show that the standard module $V$ becomes a $U_{q^{1/2}} (\mathfrak{sl}_2)$-module on which the generators $\mathcal X$, $\mathcal Y$, $\mathcal Y^{-1}$, $\mathcal Z$ 
act as follows:
\begin{align*} 
\begin{tabular}[t]{c|cccc}
{\rm generator $g$ }& $\mathcal X$ & $\mathcal Y$ & $\mathcal Y^{-1}$ & $\mathcal Z$
 \\
 \hline
 {\rm action of $g$ on $V$} & $(q-1)q^{-N/2} A_-$ & $q^{N/2}A^*$ & $q^{-N/2} (A^*)^{-1}$ & $(q-1)q^{-N/2} A_+$
    \end{tabular}
\end{align*}
  \noindent We now describe our third main result.
  We show that
 each of  $A^+, A^-, A_+, A_-$ is diagonalizable, with eigenvalues $q^i/(q-1)$ $(0 \leq i \leq N)$.
 We display four bases for the standard module $V$, said to be split. We show that each split basis diagonalizes one of $A^+, A^-, A_+, A_-$.
 We  give the action of $A, A^*$ on each split basis. We show that on each split basis, one of $A, A^*$ acts in an upper triangular fashion
 and the other one acts in a lower triangular fashion.
  Using these actions, we argue that the eigenspace decompositions of $A^+, A^-, A_+, A_-$ are the analogs of the four split decompositions
   that appear in \cite{ds, kim1,kim2,qtet}.
 \medskip
 
 \noindent We just described our three main results. The following auxiliary results may be of independent interest.
   We show that $A, A^*$ satisfy the tridiagonal relations \cite[Definition~3.9]{qSerre}.
  We consider the subalgebra $T$ of ${\rm Mat}_X(\mathbb C)$ generated by $A, A^*$.
   We show that the standard module $V$ is  a direct sum of irreducible $T$-submodules. 
   For each irreducible $T$-submodule $W$ of $V$,
   we describe the action of  $A,A^*$  on $W$ in terms of Leonard systems
   of dual $q$-Krawtchouk type \cite[Example~20.7]{LSnotes}.
 \medskip

\noindent This paper is organized as follows.
Section 2 contains some preliminaries.
In Section 3 we recall the poset $L_N(q)$.
In Sections 4, 5 we discuss the matrix $A^*$ and its primitive idempotents.
In Section 6 we discuss the matrices $R, L$.
In Section 7 we consider the matrix $A$.
In Section 8 we discuss the algebra $T$ generated by $A, A^*$.
In Section 9 we describe the irreducible $T$-modules.
In Section 10 we show that $A$ is diagonalizable, and find its eigenvalues.
In Section 11 we describe the primitive idempotents of $A$.
In Section 12 we display a $Q$-polynomial structure for $L_N(q)$.
In Section 13 we  explain how $L_N(q)$ is related to $U_{q^{1/2}}(\mathfrak{sl}_2)$.
Section 14 is about the irreducible $T$-modules and Leonard systems of dual $q$-Krawtchouk type.
Section 15 is about $U_{q^{1/2}}(\mathfrak{sl}_2)$ and split decompositions.
\medskip

\noindent The main results of the paper are Theorems \ref{cor:AQpoly}, \ref{lem:U2module}, \ref{lem:U2module2},  \ref{lem:ECE}.

\section{Preliminaries}  We now begin our formal argument.
The following concepts and notation will be used throughout the paper. Let $\mathbb C$ denote the field of complex numbers.
 Every algebra discussed in this paper is
understood to be
associative, over $\mathbb C$, and have a multiplicative identity. A subalgebra has the same multiplicative identity as the parent algebra.
\medskip

\noindent
Let $X$ denote a nonempty finite set. 
Let ${\rm Mat}_X(\mathbb C)$ denote the algebra consisting of the matrices that have rows and columns indexed by $X$ and  all entries in $\mathbb C$.
The identity matrix in ${\rm Mat}_X(\mathbb C)$ is denoted by $I$.
Let $V=\mathbb C^X$ denote the vector space over $\mathbb C$ consisting of the column vectors that have coordinates indexed by $X$ and all entries in $\mathbb C$.  Note that ${\rm Mat}_X(\mathbb C)$ acts on $V$
by left multiplication. We call $V$ the {\it standard module}. We endow $V$ with a Hermitean form $\langle\,,\,\rangle$ such that $\langle u,v\rangle = u^t \overline v$ for all $u,v \in V$, where $t$ denotes transpose and $-$ denotes complex-conjugation.
For $x \in X$ let ${\hat x}$ denote the vector in $V$ that has $x$-coordinate $1$ and all other coordinates 0. The vectors $\lbrace {\hat x} \vert x \in X\rbrace$
form an orthonormal basis for $V$. A matrix $B \in {\rm Mat}_X(\mathbb C)$ is called {\it diagonalizable} whenever $V$ is spanned by the eigenspaces of $B$. Assume that $B$ is diagonalizable, and let $\theta$ denote an
eigenvalue of $B$. By the {\it primitive idempotent for $B$ and $\theta$} we mean  the matrix in ${\rm Mat}_X(\mathbb C)$ that acts as the identity on the $\theta$-eigenspace of $B$, and as zero on every other eigenspace of $B$.
\medskip

\noindent For a positive real number $\alpha$ let $\alpha^{\frac{1}{2}}$ denote the positive square root of $\alpha$.
\section{The projective geometry $L_N(q)$}

\noindent In this section, we recall the projective geometry $L_N(q)$ and discuss its basic properties.
Background information about $L_N(q)$ can be found in \cite[Section~9.3]{bcn},  \cite[Chapter~1]{cameron},
 \cite{murali, murali2}, 
  \cite[Example 3.1(5) with $M=N$]{uniform},  \cite[Section~7]{LSintro}, \cite{Lnq}.

\begin{definition}\label{def:LNq} \rm Let $GF(q)$ denote a finite field with cardinality $q$. Fix an integer $N\geq 1$. Let $\bf V$ denote a vector space over $GF(q)$
that has dimension $N$. Let the set $X$ consist of the subspaces of $\bf V$. 
The zero subspace of $\bf V$ is denoted by $\bf 0$. Define a partial order $\leq$ on $X$ such that for $x,y \in X$,
$x\leq y$ whenever $x \subseteq y$. The poset $X, \leq $ is denoted $L_N(q)$ and called a {\it projective geometry}.  An element of $X$ is  called a {\it vertex}.
\end{definition}
\noindent For the rest of this paper, we refer to the poset $L_N(q)$ in Definition \ref{def:LNq}.
\medskip

\noindent The following result is well known; see for example \cite[p.~47]{axler}.
\begin{lemma} \label{lem:modular} {\rm (See \cite[p.~47]{axler}.)}  For $x,y \in X$ we have
\begin{align*}
{\rm dim}\,x
+{\rm dim}\,y =
{\rm dim}\,(x \cap y)
+ {\rm dim}\, (x+y).
\end{align*}
\end{lemma}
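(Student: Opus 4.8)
The plan is to prove the identity by the standard basis-extension argument, working over the field $GF(q)$ that underlies $L_N(q)$. First I would set $k = \dim(x \cap y)$ and choose a basis $v_1, \dots, v_k$ of $x \cap y$. Since $x \cap y$ is a subspace of both $x$ and $y$, I can extend this basis to a basis $v_1, \dots, v_k, u_1, \dots, u_m$ of $x$ and, separately, to a basis $v_1, \dots, v_k, w_1, \dots, w_n$ of $y$, so that $\dim x = k + m$ and $\dim y = k + n$.

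The heart of the argument is to show that the combined list $v_1, \dots, v_k, u_1, \dots, u_m, w_1, \dots, w_n$ is a basis of $x + y$. It spans $x + y$ because $x + y$ is the span of $x \cup y$, and every vector of $x$ and every vector of $y$ is a linear combination of the listed vectors. For linear independence, suppose $\sum_i a_i v_i + \sum_j b_j u_j + \sum_\ell c_\ell w_\ell = 0$. Then $\sum_\ell c_\ell w_\ell = -\sum_i a_i v_i - \sum_j b_j u_j$ lies in $x$; it obviously also lies in $y$, hence in $x \cap y$, so it is a linear combination of $v_1, \dots, v_k$. Moving that combination to the other side and using that $v_1, \dots, v_k, w_1, \dots, w_n$ is a basis of $y$ forces all $c_\ell = 0$ (and the coefficients of the $v_i$ to vanish as well). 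The original relation then reduces to $\sum_i a_i v_i + \sum_j b_j u_j = 0$, and since $v_1, \dots, v_k, u_1, \dots, u_m$ is a basis of $x$, all $a_i = 0$ and $b_j = 0$.

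With the claim established, $\dim(x+y) = k + m + n$, so adding $k = \dim(x \cap y)$ to both sides yields $\dim(x \cap y) + \dim(x+y) = (k+m) + (k+n) = \dim x + \dim y$, which is the asserted identity. There is no genuine obstacle here; the only step that requires care is the linear independence argument, where one must first isolate the $w_\ell$-part so as to land inside $x \cap y$ before invoking the two basis properties in turn. Of course, since the result is classical, one could instead simply cite \cite[p.~47]{axler}, as the statement of the lemma already indicates.
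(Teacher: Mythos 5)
Your proof is correct. The paper itself gives no argument for this lemma --- it simply cites \cite[p.~47]{axler} --- and the basis-extension argument you supply is precisely the classical proof found in that reference, so you have in effect written out the cited proof rather than taken a different route.
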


\noindent  We will use the following concepts and notation.
 Let $x,y \in X$. We write $x<y$ whenever $x \leq y $ and $x \not=y$. We say that $y$ {\it covers} $x$ whenever $x <y$ and there does not
exist $z \in X$ such that $x <z<y$. Note that $y$ covers $x$ if and only if $x \leq y$ and ${\rm dim}\, y - {\rm dim}\, x = 1$. 
We say that $x,y$ are {\it adjacent} whenever one of $x,y$ covers the other one. 
 For an integer $n\geq 0$ define
\begin{align*} 
\lbrack n \rbrack_q = \frac{q^n-1}{q-1}.
\end{align*}
\noindent We further define
\begin{align*}
\lbrack n \rbrack^!_q = \lbrack n \rbrack_q \lbrack n-1 \rbrack_q \cdots \lbrack 2 \rbrack_q \lbrack 1 \rbrack_q.
\end{align*}
We interpret $\lbrack 0 \rbrack^!_q=1$. For $0 \leq i \leq n$ define the $q$-binomial coefficient
\begin{align*}
\binom{n}{i}_q = \frac{\lbrack n \rbrack^!_q}{\lbrack i \rbrack^!_q \lbrack n-i \rbrack^!_q}.
\end{align*}

\begin{lemma} \label{lem:yi} Let $y \in X$ and write $i = {\rm dim}\,y$. Then:
\begin{enumerate}
\item[\rm (i)] $y$ covers exactly $\lbrack i \rbrack_q$ vertices;
\item[\rm (ii)] $y$ is covered by exactly $\lbrack N-i \rbrack_q$ vertices.
\end{enumerate}
\end{lemma}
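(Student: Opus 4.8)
The statement is a standard subspace-counting fact, so the plan is to first translate \emph{covers} into a condition on dimensions and then count. Recall from the discussion preceding the lemma that, for $y \in X$ with $\dim y = i$, a vertex $z$ satisfies ``$y$ covers $z$'' exactly when $z \subseteq y$ and $\dim z = i-1$, and ``$y$ is covered by $z$'' exactly when $y \subseteq z$ and $\dim z = i+1$. Thus part (i) asks for the number of $(i-1)$-dimensional subspaces of $y$, and part (ii) asks for the number of $(i+1)$-dimensional subspaces of $\mathbf V$ that contain $y$. The basic building block I would establish first is that an $n$-dimensional vector space over $GF(q)$ has exactly $(q^n-1)/(q-1) = \lbrack n \rbrack_q$ one-dimensional subspaces, since its $q^n-1$ nonzero vectors are partitioned into classes of size $q-1$ under the relation of being nonzero scalar multiples of one another (equivalently, this is $\binom{n}{1}_q$).

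For part (i), I would pass from $(i-1)$-dimensional subspaces of $y$ to one-dimensional subspaces of the dual space $y^\ast = \mathrm{Hom}_{GF(q)}(y,GF(q))$ via the inclusion-reversing bijection $z \mapsto \{f \in y^\ast : f(z)=0\}$, which carries $(i-1)$-dimensional subspaces of $y$ to one-dimensional subspaces of $y^\ast$. Since $\dim y^\ast = i$, the building block gives $\lbrack i \rbrack_q$ such subspaces. (When $i=0$ this count is $\lbrack 0 \rbrack_q = 0$, consistent with $\mathbf 0$ covering no vertex.)

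For part (ii), I would use the correspondence theorem: the map $z \mapsto z/y$ is an inclusion-preserving bijection from the subspaces of $\mathbf V$ containing $y$ onto the subspaces of $\mathbf V/y$, raising dimension by $i$; hence the $(i+1)$-dimensional subspaces of $\mathbf V$ containing $y$ correspond to the one-dimensional subspaces of $\mathbf V/y$, and since $\dim(\mathbf V/y) = N-i$ the building block gives $\lbrack N-i \rbrack_q$. As an alternative route avoiding quotients, one can argue directly that every such $z$ has the form $y + GF(q)v$ with $v \in \mathbf V \setminus y$, that there are $q^N - q^i$ choices of $v$, and that a fixed $z$ arises from exactly $|z \setminus y| = q^{i+1}-q^i = q^i(q-1)$ of them, so the count is $(q^N-q^i)/(q^i(q-1)) = (q^{N-i}-1)/(q-1) = \lbrack N-i \rbrack_q$. (When $i=N$ this count is $\lbrack 0 \rbrack_q = 0$, consistent with $\mathbf V$ being covered by no vertex.)

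I do not expect any genuine obstacle here; the argument is elementary linear algebra over a finite field. The only points requiring care are the boundary cases $i=0$ and $i=N$, and getting the dimension shift right in the duality and quotient bijections. Lemma \ref{lem:modular} is not needed for this proof.
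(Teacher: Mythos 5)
Your proof is correct, and it simply fills in the details that the paper dispatches with the one-line remark ``by routine combinatorial counting'' together with a citation to \cite[Section~9.3]{bcn}. Both the duality argument for (i) and the quotient (or direct orbit-counting) argument for (ii) are standard and match what that reference does; there is no substantive divergence from the paper's intent.
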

\begin{proof} By routine combinatorial counting; see for example \cite[Section~9.3]{bcn}.
\end{proof}

\noindent The following result is well known; see for example \cite[Lemma~9.3.2]{bcn}.

\begin{lemma} \label{lem:size} {\rm (See \cite[Lemma~9.3.2]{bcn}.)} For $0 \leq i \leq N$,
the number of vertices that have dimension $i$ is equal to $\binom{N}{i}_q$.
\end{lemma}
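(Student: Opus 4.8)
The statement to prove is Lemma~\ref{lem:size}: for $0 \leq i \leq N$, the number of vertices of dimension $i$ equals $\binom{N}{i}_q$.

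The plan is to count the $i$-dimensional subspaces of an $N$-dimensional vector space $\mathbf V$ over $GF(q)$ by a double-counting argument on ordered linearly independent tuples. First I would count the number of ordered $i$-tuples $(v_1, \dots, v_i)$ of vectors in $\mathbf V$ that are linearly independent: having chosen $v_1, \dots, v_{j-1}$ independent, the vector $v_j$ must avoid their span, which has $q^{j-1}$ elements, so there are $q^N - q^{j-1}$ choices. Hence the number of such ordered tuples is $\prod_{j=0}^{i-1}(q^N - q^j)$. Every such tuple spans a unique $i$-dimensional subspace, and each $i$-dimensional subspace $y$ arises from exactly $\prod_{j=0}^{i-1}(q^i - q^j)$ ordered bases (the same count with $N$ replaced by $i$, since $\dim y = i$). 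Therefore the number of $i$-dimensional subspaces is the ratio
\[
\frac{\prod_{j=0}^{i-1}(q^N - q^j)}{\prod_{j=0}^{i-1}(q^i - q^j)}.
\]

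The remaining step is to simplify this ratio to $\binom{N}{i}_q = \frac{[N]^!_q}{[i]^!_q\,[N-i]^!_q}$. I would factor $q^N - q^j = q^j(q^{N-j} - 1) = q^j (q-1)[N-j]_q$ and similarly $q^i - q^j = q^j(q-1)[i-j]_q$. The powers of $q$ and the factors of $(q-1)$ cancel between numerator and denominator, leaving $\frac{\prod_{j=0}^{i-1}[N-j]_q}{\prod_{j=0}^{i-1}[i-j]_q} = \frac{[N]_q [N-1]_q \cdots [N-i+1]_q}{[i]_q [i-1]_q \cdots [1]_q}$. Multiplying numerator and denominator by $[N-i]^!_q$ gives exactly $\frac{[N]^!_q}{[i]^!_q [N-i]^!_q} = \binom{N}{i}_q$, as desired.

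There is really no serious obstacle here; the argument is a standard piece of finite-field linear algebra, which is why the paper simply cites \cite[Lemma~9.3.2]{bcn}. The only point requiring a little care is the bookkeeping in the cancellation of the $q$-powers and $(q-1)$-factors, and confirming that each subspace of dimension exactly $i$ contributes the same number of ordered bases (which follows because any two $i$-dimensional subspaces are isomorphic as $GF(q)$-vector spaces). Alternatively, one could give an inductive proof using Lemma~\ref{lem:yi}: the flags of dimensions $(i-1, i)$ can be counted two ways, yielding a recurrence relating the count for dimension $i$ to that for dimension $i-1$, namely $(\text{number in dim }i)\cdot[i]_q = (\text{number in dim }i-1)\cdot[N-i+1]_q$, which together with the base case (one subspace of dimension $0$) gives the $q$-binomial coefficient; but the direct counting argument is cleaner and self-contained.
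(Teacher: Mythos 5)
Your primary argument — double-counting ordered linearly independent $i$-tuples and dividing by the number of ordered bases of a fixed $i$-dimensional subspace — is correct and self-contained, but it is not the route the paper takes. The paper's proof is the one-liner ``By Lemma~\ref{lem:yi} and induction on $i$,'' which is precisely the inductive alternative you mention in passing at the end: Lemma~\ref{lem:yi} says each $i$-dimensional subspace covers exactly $[i]_q$ subspaces of dimension $i-1$, and each $(i-1)$-dimensional subspace is covered by exactly $[N-i+1]_q$ subspaces of dimension $i$, so double-counting the cover pairs between consecutive ranks gives, writing $g_j$ for the number of $j$-dimensional subspaces, the recurrence $g_i\,[i]_q = g_{i-1}\,[N-i+1]_q$; with the base case $g_0=1$ this telescopes to $g_i=\binom{N}{i}_q$. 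The comparison is: your direct ordered-basis count is independent of Lemma~\ref{lem:yi} and makes the cancellation of the $q$-powers and $(q-1)$ factors explicit, whereas the paper's inductive argument is shorter, reuses the cover counts already established, and thereby fits the paper's modular development. Both are standard and both are correct; you have in effect given the paper's proof as your ``alternative'' and chosen the other standard proof as your main one.
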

\begin{proof} By Lemma \ref{lem:yi} and induction on $i$.
\end{proof}

\section{The matrices $\lbrace E^*_i \rbrace_{i=0}^N$ and the algebra $M^*$}
\noindent We continue to discuss the poset $L_N(q)$. In this section, we introduce the  matrices $\lbrace E^*_i \rbrace_{i=0}^N$ and the algebra $M^*$.

\begin{definition} \label{def:Eis} \rm (See \cite[Section~1]{uniform}.)  For $0 \leq i \leq N$ we define a diagonal matrix $E^*_i \in {\rm Mat}_X(\mathbb C)$ with $(y,y)$-entry
\begin{align*}
(E^*_i)_{y,y} = \begin{cases} 1, & \hbox{if ${\rm dim}\,y = i$};\\
                                              0, & \hbox{if ${\rm dim}\,y \not= i$}
                       \end{cases}                     
                       \qquad \qquad (y \in X).
\end{align*}
\noindent For notational convenience, define $E^*_{-1}=0$ and $E^*_{N+1}=0$.
\end{definition}

\begin{lemma} \label{lem:EE} We have
\begin{align*}
&E^*_i E^*_j = \delta_{i,j} E^*_i \qquad  (0 \leq i,j\leq N), \qquad \qquad I = \sum_{i=0}^N E^*_i.
\end{align*}
\end{lemma}
\begin{proof} By Definition \ref{def:Eis}.
\end{proof}

\begin{definition} \label{def:Ms} \rm (See \cite[p.~378]{tSub1}.)
 By Lemma \ref{lem:EE}, the matrices $\lbrace E^*_i \rbrace_{i=0}^N$ form a basis for a commutative subalgebra $M^*$ of ${\rm Mat}_X(\mathbb C)$. 
We call $M^*$ the {\it dual adjacency algebra} of $L_N(q)$ with respect to the vertex $\bf 0$.
\end{definition}

\begin{lemma} \label{lem:Eis}  For $0 \leq i \leq N$ and $y \in X$,
\begin{align*}
E^*_i {\hat y} = \begin{cases} {\hat y}, & \hbox{if ${\rm dim}\,y = i$};\\
                                              0, & \hbox{if ${\rm dim}\,y \not= i$}.
                       \end{cases}                    
\end{align*}
\end{lemma}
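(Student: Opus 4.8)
The plan is to compute the coordinates of the vector $E^*_i \hat y$ directly from Definition \ref{def:Eis}, using that $E^*_i$ is diagonal and that $\{\hat x \mid x \in X\}$ is the standard basis. Fix $0 \leq i \leq N$ and $y \in X$. For an arbitrary $x \in X$, the $x$-coordinate of $E^*_i \hat y$ equals $\sum_{z \in X} (E^*_i)_{x,z}(\hat y)_z$. Since $\hat y$ has $y$-coordinate $1$ and all other coordinates $0$, this sum collapses to $(E^*_i)_{x,y}$. Because $E^*_i$ is diagonal, $(E^*_i)_{x,y}=0$ unless $x=y$, in which case it equals $(E^*_i)_{y,y}$. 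Therefore $E^*_i \hat y = (E^*_i)_{y,y}\, \hat y$.

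To finish, I would substitute the value of the diagonal entry from Definition \ref{def:Eis}: $(E^*_i)_{y,y}$ equals $1$ if $\dim y = i$ and $0$ if $\dim y \neq i$. This yields $E^*_i \hat y = \hat y$ when $\dim y = i$ and $E^*_i \hat y = 0$ when $\dim y \neq i$, which is exactly the claimed formula. Equivalently, one can phrase the argument by noting that each $\hat x$ is an eigenvector of the diagonal matrix $E^*_i$ with eigenvalue $(E^*_i)_{x,x}$, and then read off the eigenvalue from Definition \ref{def:Eis}.

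There is no genuine obstacle here; this is a direct unpacking of definitions. The only point requiring minor care is the indexing convention — since ${\rm Mat}_X(\mathbb C)$ acts on $V$ by left multiplication, the $x$-coordinate of $E^*_i \hat y$ is read from row $x$ of $E^*_i$ — but as $E^*_i$ is symmetric and diagonal this causes no difficulty. I expect the whole proof to be one or two lines.
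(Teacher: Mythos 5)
Your proof is correct and matches the paper's approach: the paper's own proof of this lemma is just the terse remark ``By Definition \ref{def:Eis},'' and you have simply unpacked that reference by computing coordinates of $E^*_i\hat y$ directly from the definition of a diagonal matrix acting on a standard basis vector. No issues.
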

\begin{proof} By Definition \ref{def:Eis}.
\end{proof}

\begin{lemma} For $0 \leq i \leq N$ we have
\begin{align} \label{eq:cone}
E^*_iV = {\rm Span} \lbrace {\hat y} \vert y \in X, \; {\rm dim}\,y=i \rbrace
\end{align}
and
\begin{align}
{\rm dim}\, E^*_iV = \binom{N}{i}_q. \label{eq:ctwo}
\end{align}
\end{lemma}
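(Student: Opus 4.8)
The plan is to prove both displayed equations by unwinding Definition~\ref{def:Eis} and using the counting lemmas already established. For \eqref{eq:cone}, recall that $E^*_i$ is the diagonal matrix whose $(y,y)$-entry is $1$ precisely when ${\rm dim}\,y=i$. Since $\{\hat y\mid y\in X\}$ is a basis for $V$, and $E^*_i\hat y=\hat y$ when ${\rm dim}\,y=i$ while $E^*_i\hat y=0$ otherwise by Lemma~\ref{lem:Eis}, the image $E^*_iV$ is spanned by $\{\hat y\mid y\in X,\ {\rm dim}\,y=i\}$. This is essentially immediate; the only thing to note is that the spanning vectors are linearly independent (being a subset of the standard orthonormal basis), so they in fact form a basis for $E^*_iV$, not merely a spanning set.

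For \eqref{eq:ctwo}, I would combine \eqref{eq:cone} with Lemma~\ref{lem:size}. By the preceding paragraph, $\dim E^*_iV$ equals the number of vectors $\hat y$ with $y\in X$ and ${\rm dim}\,y=i$, which is exactly the number of $i$-dimensional subspaces of $\bf V$. Lemma~\ref{lem:size} tells us this number is $\binom{N}{i}_q$, so $\dim E^*_iV=\binom{N}{i}_q$, as claimed.

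I do not anticipate any genuine obstacle here: both parts are direct consequences of the definition of $E^*_i$ together with results already in hand (Lemma~\ref{lem:Eis} and Lemma~\ref{lem:size}). If anything deserves a word of care, it is simply the observation that distinct basis vectors $\hat y$ are linearly independent, so that a spanning set of such vectors is automatically a basis and its cardinality computes the dimension; this is what licenses the passage from \eqref{eq:cone} to \eqref{eq:ctwo}.
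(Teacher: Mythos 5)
Your argument is correct and follows exactly the paper's route: \eqref{eq:cone} is read off from Lemma~\ref{lem:Eis}, and \eqref{eq:ctwo} then follows from \eqref{eq:cone} together with the count in Lemma~\ref{lem:size}. The extra remark about linear independence of the standard basis vectors is a harmless elaboration of what the paper leaves implicit.
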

\begin{proof} Assertion \eqref{eq:cone} follows  from Lemma \ref{lem:Eis}. 
Assertion \eqref{eq:ctwo} follows from \eqref{eq:cone} and Lemma \ref{lem:size}.
\end{proof}

\begin{lemma} \label{lem:DS} We have
\begin{align*}
V = \sum_{i=0}^N E^*_iV \qquad \quad \hbox{\rm (orthogonal direct sum)}.
\end{align*}
\end{lemma}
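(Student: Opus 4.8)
The plan is to combine the spectral-type decomposition already established for the family $\lbrace E^*_i \rbrace_{i=0}^N$ with the orthogonality of the standard basis $\lbrace \hat x \vert x \in X \rbrace$. First I would use Lemma \ref{lem:EE}: since $I = \sum_{i=0}^N E^*_i$, applying both sides to an arbitrary $v \in V$ gives $v = \sum_{i=0}^N E^*_i v$, so $V = \sum_{i=0}^N E^*_i V$ as a (not yet proven orthogonal) sum. That this sum is \emph{direct} follows from the idempotent relations $E^*_i E^*_j = \delta_{i,j} E^*_i$: if $\sum_{i=0}^N v_i = 0$ with $v_i \in E^*_i V$, then applying $E^*_j$ and using $E^*_j v_i = E^*_j E^*_i v_i = \delta_{i,j} v_i$ yields $v_j = 0$ for each $j$.

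It then remains to check orthogonality of the summands. The cleanest route is \eqref{eq:cone}: $E^*_i V = {\rm Span}\lbrace \hat y \vert y \in X, \ {\rm dim}\, y = i\rbrace$. For $i \neq j$, any vector in $E^*_i V$ is a linear combination of $\hat y$ with ${\rm dim}\, y = i$, and any vector in $E^*_j V$ is a linear combination of $\hat z$ with ${\rm dim}\, z = j$; since such $y,z$ are necessarily distinct, and since $\lbrace \hat x \vert x \in X\rbrace$ is an orthonormal basis for $V$ with respect to $\langle\,,\,\rangle$, we get $\langle \hat y, \hat z\rangle = 0$, hence $\langle u, v \rangle = 0$ for all $u \in E^*_i V$ and $v \in E^*_j V$. (Alternatively, one notes that each $E^*_i$ is a real diagonal matrix, hence self-adjoint with respect to $\langle\,,\,\rangle$, so its eigenspaces for distinct eigenvalues are mutually orthogonal; but the basis-vector argument is more elementary and self-contained.)

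There is essentially no obstacle here: the lemma is a formal consequence of Lemma \ref{lem:EE} together with the orthonormality of $\lbrace \hat x \vert x \in X\rbrace$ recalled in Section 2 and the description \eqref{eq:cone}. The only point requiring a word of care is to state clearly that ``orthogonal direct sum'' packages two assertions — directness and pairwise orthogonality — and to verify both, as above.
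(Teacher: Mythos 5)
Your proposal is correct and follows essentially the same route as the paper: the paper's one-line proof cites \eqref{eq:cone} together with the orthonormality of $\lbrace \hat y \mid y \in X\rbrace$, which is exactly the core of your argument. You simply spell out the additional (routine) steps that the sum equals $V$ and is direct via Lemma \ref{lem:EE}, which is implicit in the paper's brevity.
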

\begin{proof} By \eqref{eq:cone} and since $\lbrace {\hat y} \vert y \in X\rbrace$ is an orthonormal basis for $V$.
\end{proof}

\section{The matrix $A^*$}
\noindent We continue to discuss the poset $L_N(q)$. In this section, we interpret the subspaces $E^*_iV$ $(0 \leq i \leq N)$ as the eigenspaces
for a certain matrix  $A^*$ that  generates $M^*$.
\begin{definition}\label{def:As} \rm {\rm (See \cite[Definition~5.7]{Lnq}.)} Define a diagonal matrix $A^* \in {\rm Mat}_X(\mathbb C)$ with $(y,y)$-entry
\begin{align*}
A^*_{y,y} = q^{-{\rm dim}\,y}, \qquad \qquad (y \in X).
\end{align*}
\end{definition}

\noindent In Section 12, we will see that  $A^*$ is a dual adjacency matrix for $L_N(q)$, in the sense of  \cite[Definition~20.6]{int}. 
\medskip

\noindent We have some comments about $A^*$. The matrix $A^*$ is invertible. We have
\begin{align}
A^* {\hat y} = q^{-{\rm dim}\,y} {\hat y}, \qquad \qquad (A^*)^{-1} {\hat y} = q^{{\rm dim}\,y} {\hat y},  \qquad \qquad (y \in X). \label{eq:AsAct}
\end{align}
We have 
\begin{align} \label{eq:sumAs}
A^*= \sum_{i=0}^N q^{-i} E^*_i, \qquad \qquad (A^*)^{-1} = \sum_{i=0}^N q^i E^*_i.
\end{align}
The eigenvalues of $A^*$ are $\lbrace q^{-i} \rbrace_{i=0}^N$.
For $0 \leq i \leq N$, $E^*_i$ is the primitive idempotent of $A^*$ for the eigenvalue $q^{-i}$, and $E^*_iV$ is the corresponding eigenspace.
For notational convenience,  abbreviate $\theta^*_i = q^{-i}$ for $0 \leq i \leq N$. By linear algebra,
\begin{align} \label{eq:Ais}
 E^*_i=\prod_{\stackrel{0 \leq j \leq N}{j \neq i}}
       \frac{A^*-\theta^*_jI}{\theta^*_i-\theta^*_j}, \qquad \qquad (0 \leq i \leq N).
\end{align}
\noindent Note that the algebra $M^*$ is generated by $A^*$.

\section{The matrices $R, L$}
\noindent We continue to discuss the poset $L_N(q)$. In this section, we introduce some matrices $R, L \in {\rm Mat}_X(\mathbb C)$. We describe
how $R, L$ act on $V$, and  how $A^*, R, L$ are related.

\begin{definition} \label{def:RL} \rm  (See \cite[Section~3]{dickieTer}.) We define the matrices $R, L \in {\rm Mat}_X(\mathbb C)$ as follows. For $y,z \in X$ their $(y,z)$-entries are
\begin{align*}
R_{y,z} = \begin{cases} 1, & \hbox{if $y$ covers $z$};\\
                                              0, & \hbox{if $y$ does not cover $z$},
                       \end{cases}
             \qquad 
 L_{y,z} = \begin{cases} 1, & \hbox{if $z$ covers $y$};\\
                                              0, & \hbox{if $z$ does not cover $y$}.
                       \end{cases} 
\end{align*}
Note that $R^t=L$. We call $R$ (resp. $L$) the {\it raising matrix} (resp. {\it lowering matrix}) of $L_N(q)$.
\end{definition}

\begin{lemma} \label{lem:RLact} For $y \in X$ we have
\begin{align*}
R {\hat y} = \sum_{z \;{\rm covers}\;  y} {\hat z}, \qquad \qquad L {\hat y} = \sum_{y \;{\rm covers}\;  z} {\hat z}.
\end{align*}
\end{lemma}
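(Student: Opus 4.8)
The statement to prove is Lemma~\ref{lem:RLact}, describing how $R$ and $L$ act on the standard basis vectors $\hat y$.

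The plan is to unwind the definitions directly. Both claims are about identifying the coordinates of the vectors $R\hat y$ and $L\hat y$ with respect to the orthonormal basis $\lbrace \hat z \vert z \in X\rbrace$, and this is pure bookkeeping with the matrix entries from Definition~\ref{def:RL}.

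First I would recall that for any matrix $B \in {\rm Mat}_X(\mathbb C)$ and any $y \in X$, the vector $B\hat y$ has $z$-coordinate equal to $(B\hat y)_z = \sum_{w \in X} B_{z,w} (\hat y)_w = B_{z,y}$, since $\hat y$ has $y$-coordinate $1$ and all other coordinates $0$. Hence $B\hat y = \sum_{z \in X} B_{z,y}\,\hat z$. Applying this with $B = R$: by Definition~\ref{def:RL}, $R_{z,y} = 1$ if $z$ covers $y$ and $0$ otherwise, so $R\hat y = \sum_{z \in X} R_{z,y}\,\hat z = \sum_{z\, {\rm covers}\, y} \hat z$, which is the first assertion. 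Applying this with $B = L$: by Definition~\ref{def:RL}, $L_{z,y} = 1$ if $y$ covers $z$ and $0$ otherwise, so $L\hat y = \sum_{z \in X} L_{z,y}\,\hat z = \sum_{y\, {\rm covers}\, z} \hat z$, which is the second assertion. Alternatively, the second formula follows from the first together with $R^t = L$ (noted in Definition~\ref{def:RL}), since transposing swaps the roles of $y$ and $z$ in the covering relation.

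There is essentially no obstacle here: the only thing to be careful about is the indexing convention, i.e.\ that the $z$-coordinate of $B\hat y$ is $B_{z,y}$ and not $B_{y,z}$, so that the covering relation comes out in the direction claimed ($z$ covers $y$ for $R$, and $y$ covers $z$ for $L$). Once that is pinned down, the lemma is immediate from Definition~\ref{def:RL}.
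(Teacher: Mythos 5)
Your proof is correct and matches the paper's approach, which simply cites Definition~\ref{def:RL} and the construction; you have merely spelled out the coordinate bookkeeping (that the $z$-coordinate of $B\hat y$ is $B_{z,y}$) explicitly, which is the right check to make.
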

\begin{proof} By Definition  \ref{def:RL} and the construction.
\end{proof}

\begin{lemma} \label{lem:RaiseLower} We have
\begin{align*}
&R E^*_iV \subseteq E^*_{i+1} V \qquad  (0 \leq i \leq N-1), \qquad \qquad RE^*_NV=0, \\
& L E^*_iV \subseteq E^*_{i-1}V \qquad  (1 \leq i \leq N), \qquad \qquad LE^*_0V=0.
\end{align*}
\end{lemma}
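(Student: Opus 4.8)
The statement to prove is Lemma~\ref{lem:RaiseLower}, which asserts that $R$ raises the dimension grading by one and $L$ lowers it by one (with the appropriate endpoint cases vanishing).

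\medskip

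\noindent\textbf{Proof proposal.} The plan is to work directly from the explicit action of $R$ and $L$ on the standard basis vectors $\lbrace \hat y \rbrace_{y \in X}$ given in Lemma~\ref{lem:RLact}, combined with the description of $E^*_iV$ as the span of those $\hat y$ with ${\rm dim}\, y = i$, namely \eqref{eq:cone}. First I would fix $i$ with $0 \leq i \leq N-1$ and take an arbitrary $y \in X$ with ${\rm dim}\, y = i$; since $E^*_iV$ is spanned by such $\hat y$, it suffices by linearity to show $R\hat y \in E^*_{i+1}V$. By Lemma~\ref{lem:RLact}, $R\hat y = \sum_{z \,{\rm covers}\, y} \hat z$, and every $z$ covering $y$ satisfies ${\rm dim}\, z = {\rm dim}\, y + 1 = i+1$ (using the characterization of the covering relation recalled in Section~3: $z$ covers $y$ iff $y \leq z$ and ${\rm dim}\, z - {\rm dim}\, y = 1$). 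Hence each $\hat z$ appearing lies in $E^*_{i+1}V$ by \eqref{eq:cone}, so $R\hat y \in E^*_{i+1}V$, giving $RE^*_iV \subseteq E^*_{i+1}V$.

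\medskip

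\noindent For the endpoint $RE^*_NV = 0$: if ${\rm dim}\, y = N$ then $y = {\bf V}$ is the whole space, so there is no $z \in X$ covering $y$ (there is no subspace of dimension $N+1$), and the sum in Lemma~\ref{lem:RLact} is empty, giving $R\hat y = 0$; since these $\hat y$ span $E^*_NV$, we get $RE^*_NV = 0$. The statements for $L$ are entirely symmetric: for $1 \leq i \leq N$ and ${\rm dim}\, y = i$, Lemma~\ref{lem:RLact} gives $L\hat y = \sum_{y \,{\rm covers}\, z} \hat z$, and each such $z$ has ${\rm dim}\, z = i-1$, so $L\hat y \in E^*_{i-1}V$; and for ${\rm dim}\, y = 0$, i.e.\ $y = {\bf 0}$, there is no $z$ with $y$ covering $z$, so $L\hat y = 0$ and $LE^*_0V = 0$. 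Alternatively, the $L$-statements follow from the $R$-statements by taking transposes, using $R^t = L$ and the fact that each $E^*_i$ is symmetric (diagonal), together with Lemma~\ref{lem:DS}: from $RE^*_iV \subseteq E^*_{i+1}V$ one deduces $E^*_iR E^*_j = 0$ unless $j = i-1$, hence $E^*_j L E^*_i = (E^*_i R E^*_j)^t = 0$ unless $j = i-1$, which is exactly $LE^*_iV \subseteq E^*_{i-1}V$.

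\medskip

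\noindent I do not anticipate any genuine obstacle here; the only thing to be careful about is citing the correct characterization of the covering relation in $L_N(q)$ (namely that $z$ covers $y$ precisely when $y \subseteq z$ and the dimensions differ by $1$), which is recorded in Section~3, and handling the two boundary cases where the relevant covering sets are empty. The argument is purely a bookkeeping translation between the matrix description of $R, L$ and the dimension grading of $X$.
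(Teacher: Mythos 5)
Your proposal is correct and takes essentially the same route as the paper, which simply cites \eqref{eq:cone} together with Lemma~\ref{lem:RLact}; you have unpacked that citation into the explicit covering-relation bookkeeping, including the boundary cases, without changing the underlying argument.
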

\begin{proof} By \eqref{eq:cone} 
and Lemma \ref{lem:RLact}.
\end{proof}

\begin{lemma} \label{lem:RLE} We have
\begin{align*}
&R E^*_i = E^*_{i+1} R \not=0 \qquad (0 \leq i \leq N-1), \qquad \quad RE^*_N=0, \quad E^*_0R=0, \\
&L E^*_i = E^*_{i-1} L \not=0\qquad (1 \leq i \leq N), \qquad \qquad LE^*_0=0, \quad E^*_NL=0.
\end{align*}
\end{lemma}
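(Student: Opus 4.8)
The plan is to derive everything from three ingredients already in hand: the explicit action of $R,L$ on the standard basis (Lemma \ref{lem:RLact}), the grading statement that $R$ raises the $E^*$-index by one and $L$ lowers it by one (Lemma \ref{lem:RaiseLower}), and the idempotent identities $E^*_iE^*_j=\delta_{i,j}E^*_i$, $\sum_{i=0}^N E^*_i=I$ of Lemma \ref{lem:EE}.

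First I would establish the commutation relations for $R$. Write $R=IRI=\sum_{j,k=0}^N E^*_jRE^*_k$. By Lemma \ref{lem:RaiseLower} we have $RE^*_kV\subseteq E^*_{k+1}V$, and since the sum $V=\sum_i E^*_iV$ is direct (Lemma \ref{lem:DS}), it follows that $E^*_jRE^*_k=0$ whenever $j\ne k+1$. Hence for $0\le i\le N-1$, on one hand $RE^*_i=\sum_{j}E^*_jRE^*_i=E^*_{i+1}RE^*_i$, and on the other hand $E^*_{i+1}R=\sum_k E^*_{i+1}RE^*_k=E^*_{i+1}RE^*_i$; comparing the two gives $RE^*_i=E^*_{i+1}R$. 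The same bookkeeping yields the boundary cases: $RE^*_N=0$ because $RE^*_NV=0$ by Lemma \ref{lem:RaiseLower}, and $E^*_0R=\sum_k E^*_0RE^*_k=0$ since no index $k\ge 0$ satisfies $0=k+1$ (this also matches the convention $E^*_{-1}=0$ of Definition \ref{def:Eis}, via $E^*_0R=RE^*_{-1}$).

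Next I would dispatch the nonvanishing claim $RE^*_i\ne 0$ for $0\le i\le N-1$. Pick any $y\in X$ with $\dim y=i$; by Lemma \ref{lem:yi}(ii) it is covered by exactly $\lbrack N-i\rbrack_q\ge 1$ vertices, so by Lemma \ref{lem:RLact} the vector $RE^*_i{\hat y}=R{\hat y}$ is a nonempty sum of distinct members of the orthonormal basis $\lbrace {\hat z}\rbrace$, hence nonzero; therefore $RE^*_i\ne 0$.

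Finally, the statements for $L$ follow by transposition. Since $L=R^t$ (Definition \ref{def:RL}) and each $E^*_i$ is a symmetric matrix (Definition \ref{def:Eis}), applying $(\cdot)^t$ to $RE^*_{i-1}=E^*_iR$ (valid for $1\le i\le N$) gives $E^*_{i-1}L=LE^*_i$, and likewise transposing $RE^*_N=0$ and $E^*_0R=0$ gives $E^*_NL=0$ and $LE^*_0=0$; the nonvanishing $LE^*_i\ne 0$ for $1\le i\le N$ is immediate from $(LE^*_i)^t=E^*_iR\ne 0$, or alternatively one repeats the direct argument using Lemma \ref{lem:yi}(i) (each $y$ with $\dim y=i\ge 1$ covers $\lbrack i\rbrack_q\ge 1$ vertices). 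I do not expect a genuine obstacle here; the only point requiring care is consistent handling of the boundary indices $i=0$ and $i=N$ together with the convention $E^*_{-1}=E^*_{N+1}=0$.
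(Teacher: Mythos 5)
Your proof is correct and follows the same route the paper indicates: the equalities come from Lemma \ref{lem:EE} together with Lemma \ref{lem:RaiseLower} via the $I=\sum_i E^*_i$ resolution, and the nonvanishing comes from Lemma \ref{lem:yi}. The only cosmetic difference is that you obtain the $L$-identities by transposing the $R$-identities (using $L=R^t$ and the symmetry of $E^*_i$) rather than repeating the argument directly; both are consistent with the paper's one-line proof.
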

\begin{proof} The equalities are from Lemmas \ref{lem:EE}, \ref{lem:RaiseLower}. The inequalities follow from Lemma \ref{lem:yi}. 
\end{proof}

\noindent Next, we describe how $A^*, R, L$ are related.
\begin{lemma} \label{lem:AsRL}  We have
\begin{align*}
&A^* L = q L A^*, \qquad \qquad A^* R = q^{-1} R A^*, \\
 & LR-RL = \frac{q^N A^* - (A^*)^{-1}}{q-1}.
\end{align*}
\end{lemma}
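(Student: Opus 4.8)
The plan is to verify each of the three identities by evaluating both sides on the standard basis $\{\hat y \mid y \in X\}$ and matching coefficients. This is natural because $A^*$ is diagonal with known eigenvalues on $\hat y$ (see \eqref{eq:AsAct}), and $R, L$ have explicit combinatorial actions on $\hat y$ by Lemma \ref{lem:RLact}.

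First I would handle the two commutation-like relations $A^* L = q L A^*$ and $A^* R = q^{-1} R A^*$. For a vertex $y$ with $\dim y = i$, Lemma \ref{lem:RLact} gives $L\hat y = \sum_{y \text{ covers } z} \hat z$, where each such $z$ has dimension $i-1$. Then $A^* L \hat y = q^{-(i-1)} L\hat y$ while $L A^* \hat y = q^{-i} L \hat y$, so $A^* L \hat y = q \cdot L A^* \hat y$; since this holds for all $y$ we get $A^* L = q L A^*$. The identity $A^* R = q^{-1} R A^*$ is obtained the same way, using that $R\hat y$ is supported on vertices of dimension $i+1$. Alternatively, both identities follow instantly from \eqref{eq:sumAs} together with Lemma \ref{lem:RLE}: writing $A^* = \sum_i q^{-i} E^*_i$ and using $R E^*_i = E^*_{i+1} R$, $L E^*_i = E^*_{i-1} L$, one gets $A^* R = \sum_i q^{-i} E^*_i R = \sum_i q^{-i} R E^*_{i-1} = \sum_j q^{-(j+1)} R E^*_j = q^{-1} R A^*$, and similarly for $L$. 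I would probably present this cleaner algebraic derivation.

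The substantive step is the commutator $LR - RL = \frac{q^N A^* - (A^*)^{-1}}{q-1}$. Since the right-hand side equals $\sum_{i=0}^N \frac{q^N q^{-i} - q^i}{q-1} E^*_i = \sum_{i=0}^N [N-i]_q E^*_i$ after simplifying $\frac{q^{N-i} - q^i}{q-1}$... wait, more carefully: $\frac{q^N q^{-i} - q^i}{q-1}$ does not simplify that way, so I would instead note $LR - RL$ preserves each $E^*_iV$ (by Lemma \ref{lem:RaiseLower}, $LR$ and $RL$ both map $E^*_iV$ to itself), hence it suffices to show that on $E^*_iV$ the operator $LR - RL$ acts as the scalar $\frac{q^N q^{-i} - q^i}{q-1}$. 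The key computation is: for $y$ with $\dim y = i$, expand $RL\hat y = \sum_{y \text{ covers } z}\sum_{w \text{ covers } z}\hat w$ and $LR\hat y = \sum_{z \text{ covers } y}\sum_{z \text{ covers } w}\hat w$. The diagonal ($w = y$) terms contribute $[i]_q$ from $RL$ (number of $z$ covered by $y$, by Lemma \ref{lem:yi}(i)) and $[N-i]_q$ from $LR$ (Lemma \ref{lem:yi}(ii)). For the off-diagonal terms with $w \ne y$, $\dim w = i$, I claim the contributions from $LR$ and $RL$ cancel: for a given $w$ with $\dim(w \cap y) = i - 1$ (the only way to get nonzero contribution), the unique common lower cover is $w \cap y$ and the unique common upper cover is $w + y$, where $\dim(w+y) = i+1$ by Lemma \ref{lem:modular}; so each off-diagonal $w$ appears exactly once in $LR\hat y$ and exactly once in $RL\hat y$, cancelling in the difference. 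Hence $(LR - RL)\hat y = ([N-i]_q - [i]_q)\hat y$, and one checks $[N-i]_q - [i]_q = \frac{q^{N-i} - q^i}{q-1} = \frac{q^N q^{-i} - q^i}{q - 1} \cdot$... actually $\frac{q^{N-i}-1}{q-1} - \frac{q^i-1}{q-1} = \frac{q^{N-i} - q^i}{q-1}$, and on $E^*_iV$ we have $A^* = q^{-i}I$, $(A^*)^{-1} = q^i I$, so $\frac{q^N A^* - (A^*)^{-1}}{q-1}$ acts as $\frac{q^{N-i} - q^i}{q-1}$, matching.

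The main obstacle is the cancellation argument for the off-diagonal terms: one must argue that for each vertex $w \ne y$ of dimension $i$, the number of common lower covers of $w, y$ equals the number of common upper covers, and that this number is $0$ unless $\dim(w\cap y) = i-1$ (equivalently $\dim(w+y) = i+1$, by modularity), in which case both counts equal $1$. This is a small incidence-geometry fact but it is the only place where genuine structure of $L_N(q)$ (beyond the counting in Lemma \ref{lem:yi}) enters. Everything else is bookkeeping with the identities from Section 4.
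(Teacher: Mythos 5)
Your proposal is correct and takes essentially the same route as the paper: the first two relations are verified by evaluating on the standard basis (or equivalently via the $E^*_i$ commutation identities), and the commutator $LR-RL$ is computed on each $\hat y$ using Lemma \ref{lem:RLact}, with the modular law (Lemma \ref{lem:modular}) forcing the off-diagonal cancellation and Lemma \ref{lem:yi} supplying the diagonal counts $[N-i]_q-[i]_q$. The paper's proof cites precisely these lemmas; you have filled in the same argument in detail.
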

\begin{proof} The first two relations follow from Lemma \ref{lem:RaiseLower} and the comments below Definition \ref{def:As}.
To get the third relation, use Lemmas \ref{lem:modular}, \ref{lem:yi}, \ref{lem:RLact}.
\end{proof}

\section{The matrix $A$ and the algebra $M$}
\noindent We continue to discuss the poset $L_N(q)$. In this section, we introduce the matrix $A$ and the algebra $M$.
 The matrix $A$ will depend on a positive parameter $\varphi$ contained in the field $\mathbb R$ of real numbers.
  The choice of $\varphi$ is arbitrary.
We remark that $A$ generalizes the weighted adjacency matrix for $L_N(q)$ given in \cite[Section~1]{murali}.

\begin{definition}\label{def:weightedA} \rm Pick $\varphi\in \mathbb R$ such that $\varphi>0$.
Define a  matrix $A \in {\rm Mat}_X(\mathbb C)$ with $(y,z)$-entry
\begin{align*}
A_{y,z} = \begin{cases} 1, & \mbox{if $y$ covers $z$}; \\
                                       \varphi q^{{\rm dim} \,y}, & \mbox{if $z$ covers $y$}; \\
                                        \frac{\varphi-1}{q-1} q^{{\rm dim}\,y}, & \mbox{if $y=z$}; \\
                                         0, &     \mbox{if $y,z$ are distinct and nonadjacent}.
                  \end{cases} \qquad  (y,z \in X).
\end{align*}
\end{definition}

\begin{remark}\rm For $\varphi=1$, the matrix $A$ in Definition \ref{def:weightedA} appears in \cite[Section~1]{murali}.
\end{remark}

\begin{definition}\label{def:M} \rm Let $M$ denote the subalgebra of ${\rm Mat}_X(\mathbb C)$ generated by $A$.
We call $M$ the {\it adjacency algebra} for $L_N(q)$ associated with $A$.
\end{definition}

\noindent Next, we express $A$ in terms of $R, L, A^*$.

\begin{lemma} \label{lem:Ashape} We have
\begin{align*}
A = R + \varphi (A^*)^{-1} L+ \frac{\varphi-1}{q-1}(A^*)^{-1}.
\end{align*}
\end{lemma}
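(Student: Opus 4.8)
The plan is to verify the claimed identity $A = R + \varphi (A^*)^{-1} L + \frac{\varphi-1}{q-1}(A^*)^{-1}$ by checking that the two sides have the same $(y,z)$-entry for all $y,z \in X$, using the explicit entrywise description of $A$ in Definition \ref{def:weightedA}. First I would record how each of the three summands on the right acts entrywise. Since $(A^*)^{-1}$ is diagonal with $(A^*)^{-1}_{y,y} = q^{{\rm dim}\,y}$ (see \eqref{eq:AsAct}), multiplying a matrix $B$ on the left by $(A^*)^{-1}$ simply scales row $y$ of $B$ by $q^{{\rm dim}\,y}$. Hence:
\begin{itemize}
\item[] $R$ has $(y,z)$-entry $1$ if $y$ covers $z$ and $0$ otherwise (Definition \ref{def:RL});
\item[] $\varphi (A^*)^{-1} L$ has $(y,z)$-entry $\varphi q^{{\rm dim}\,y}$ if $z$ covers $y$ and $0$ otherwise, since $L_{y,z}=1$ exactly when $z$ covers $y$;
\item[] $\frac{\varphi-1}{q-1}(A^*)^{-1}$ has $(y,z)$-entry $\frac{\varphi-1}{q-1}q^{{\rm dim}\,y}$ if $y=z$ and $0$ otherwise.
\end{itemize}

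Next I would combine these contributions case by case according to the relative position of $y$ and $z$. If $y$ covers $z$, then $z$ does not cover $y$ and $y \neq z$, so only the $R$-term contributes, giving entry $1$. If $z$ covers $y$, then symmetrically only the $\varphi(A^*)^{-1}L$-term contributes, giving $\varphi q^{{\rm dim}\,y}$. If $y=z$, only the diagonal term contributes, giving $\frac{\varphi-1}{q-1}q^{{\rm dim}\,y}$. Finally, if $y,z$ are distinct and nonadjacent, all three terms vanish, giving $0$. In every case this matches $A_{y,z}$ from Definition \ref{def:weightedA}, so the two matrices are equal.

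There is essentially no obstacle here; the lemma is a bookkeeping identity, and the only point requiring a moment's care is the observation that left multiplication by the diagonal matrix $(A^*)^{-1}$ scales the entries of row $y$ by $q^{{\rm dim}\,y}$, so that the factor $q^{{\rm dim}\,y}$ in the off-diagonal and diagonal entries of $A$ is correctly produced. One should also note that the four cases in Definition \ref{def:weightedA} are mutually exclusive and exhaust all pairs $(y,z)$, so the entrywise comparison is complete.
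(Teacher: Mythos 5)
Your proof is correct and follows exactly the paper's approach: compute the $(y,z)$-entry of each side using Definitions \ref{def:As}, \ref{def:RL}, \ref{def:weightedA} and compare case by case. Your observation that left multiplication by the diagonal matrix $(A^*)^{-1}$ scales row $y$ by $q^{{\rm dim}\,y}$ is precisely the bookkeeping step the paper's one-line proof leaves implicit.
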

\begin{proof} To verify this equation, compute the entries  of each side using Definitions \ref{def:As}, \ref{def:RL}, \ref{def:weightedA}.
\end{proof}
\noindent Next, we express $R, L$ in terms of $A$ and $\lbrace E^*_i\rbrace_{i=0}^N$.
\begin{lemma} \label{eq:RLpoly} We have
\begin{align*}
R = \sum_{i=1}^N E^*_i A E^*_{i-1}, \qquad \qquad L= \varphi^{-1}  \sum_{i=1}^N q^{1-i}E^*_{i-1} A E^*_i.
\end{align*}
\end{lemma}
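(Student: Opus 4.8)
The plan is to invert the relation from Lemma~\ref{lem:Ashape} and then isolate the raising and lowering parts using the grading matrices $\lbrace E^*_i \rbrace_{i=0}^N$. First I would multiply the identity $A = R + \varphi (A^*)^{-1} L + \frac{\varphi-1}{q-1}(A^*)^{-1}$ on the left by $E^*_i$ and on the right by $E^*_{i-1}$. By Lemma~\ref{lem:RLE} we have $E^*_i R E^*_{i-1} = R E^*_{i-1} = E^*_i R$, so the first term contributes $R E^*_{i-1}$. The term $(A^*)^{-1} L$ maps $E^*_i V$ into $E^*_{i-1}V$ (Lemma~\ref{lem:RaiseLower} and \eqref{eq:sumAs}), hence $E^*_i (A^*)^{-1} L E^*_{i-1} = 0$; similarly the diagonal term $(A^*)^{-1}$ lies in $M^*$ and commutes with the $E^*_j$, so $E^*_i (A^*)^{-1} E^*_{i-1} = 0$. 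Therefore $E^*_i A E^*_{i-1} = R E^*_{i-1}$ for $1 \le i \le N$, and summing over $i$ together with $I = \sum_{i=0}^N E^*_i$ (Lemma~\ref{lem:EE}) gives $\sum_{i=1}^N E^*_i A E^*_{i-1} = R \sum_{i=0}^{N-1} E^*_{i-1}$... more precisely $= R(I - E^*_N) = R$, since $R E^*_N = 0$ by Lemma~\ref{lem:RLE}. This yields the first formula.

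For the second formula I would instead multiply on the left by $E^*_{i-1}$ and on the right by $E^*_i$. Now the term $R$ raises the grading, so $E^*_{i-1} R E^*_i = 0$; the diagonal term $(A^*)^{-1}$ again vanishes between different idempotents. The surviving term is $\varphi\, E^*_{i-1} (A^*)^{-1} L E^*_i$. Using \eqref{eq:sumAs} one has $E^*_{i-1}(A^*)^{-1} = q^{i-1} E^*_{i-1}$, and by Lemma~\ref{lem:RLE} $E^*_{i-1} L E^*_i = L E^*_i$. Hence $E^*_{i-1} A E^*_i = \varphi q^{i-1} L E^*_i$, i.e. $L E^*_i = \varphi^{-1} q^{1-i} E^*_{i-1} A E^*_i$ for $1 \le i \le N$. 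Summing over $i$ and using $L E^*_0 = 0$ (Lemma~\ref{lem:RLE}) together with $I = \sum_{i=0}^N E^*_i$ gives $L = L \sum_{i=0}^N E^*_i = \sum_{i=1}^N L E^*_i = \varphi^{-1} \sum_{i=1}^N q^{1-i} E^*_{i-1} A E^*_i$, which is the claimed expression.

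The argument is essentially bookkeeping with the idempotents; there is no real obstacle, but the one point that needs care is making sure the ``off-diagonal'' pieces $E^*_i (A^*)^{-1} L E^*_{i-1}$ and $E^*_i (A^*)^{-1} E^*_{i-1}$ genuinely vanish. For the first of these, note $(A^*)^{-1} L = L \cdot q^{-1}(\text{something in } M^*)$ by the commutation $A^* L = q L A^*$ of Lemma~\ref{lem:AsRL}, or more directly that $(A^*)^{-1} L$ maps $E^*_iV$ to $E^*_{i-1}V$, so sandwiching with $E^*_i$ on both... with $E^*_i$ on the left kills it. For the second, $(A^*)^{-1} \in M^*$ is a linear combination of the $E^*_j$, and $E^*_i E^*_j E^*_{i-1} = 0$ for all $j$ since $i \ne i-1$. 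Once these vanishings are in hand, the two identities follow by linearity and the completeness relation $I = \sum_{i=0}^N E^*_i$, being careful about the boundary terms $E^*_N$ and $E^*_0$ where $R$ and $L$ act as zero.
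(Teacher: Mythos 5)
Your proof is correct and follows essentially the same route as the paper: substitute the expression for $A$ from Lemma~\ref{lem:Ashape} into the right-hand sides, then simplify using \eqref{eq:sumAs}, Lemma~\ref{lem:EE}, and Lemma~\ref{lem:RLE}. One small wording slip: to see that $E^*_i(A^*)^{-1}LE^*_{i-1}=0$, the relevant fact is that $(A^*)^{-1}L$ sends $E^*_{i-1}V$ into $E^*_{i-2}V$ (not $E^*_iV$ into $E^*_{i-1}V$), which $E^*_i$ then annihilates; your conclusion is right but the degrees you cite in that sentence are off by one.
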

\begin{proof} To verify these equations, eliminate $A$ using Lemma \ref{lem:Ashape}, and evaluate the results using \eqref{eq:sumAs} along with
Lemmas  \ref{lem:EE},  \ref{lem:RLE}.
\end{proof}

\noindent Next, we describe how $A$ and $\lbrace E^*_i \rbrace_{i=0}^N$ are related.

\begin{lemma}\label{lem:triples} For $0 \leq i,j\leq N$ we have
\begin{align*}
E^*_i A E^*_j = \begin{cases} 0, & \mbox{if $\vert i-j\vert >1$}; \\
                                            \not=0, & \mbox{if $\vert i-j\vert=1$}.
                         \end{cases}
\end{align*}
\end{lemma}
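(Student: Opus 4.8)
The plan is to use Lemma~\ref{lem:Ashape}, which expresses $A=R+\varphi(A^*)^{-1}L+\tfrac{\varphi-1}{q-1}(A^*)^{-1}$, together with the commutation/annihilation relations for $R,L$ and $\{E^*_i\}_{i=0}^N$ recorded in Lemma~\ref{lem:RLE} and the fact from \eqref{eq:sumAs} that $(A^*)^{-1}$ is a linear combination of the $E^*_i$, hence commutes with each $E^*_j$. First I would multiply the formula for $A$ on the left by $E^*_i$ and on the right by $E^*_j$. Since $(A^*)^{-1}E^*_j=q^jE^*_j$, the middle and last terms contribute $E^*_iR E^*_j$, a scalar multiple of $E^*_i L E^*_j$, and a scalar multiple of $E^*_i E^*_j$. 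By Lemma~\ref{lem:RLE} we have $E^*_iRE^*_j=0$ unless $i=j+1$, $E^*_iLE^*_j=0$ unless $i=j-1$, and by Lemma~\ref{lem:EE} $E^*_iE^*_j=0$ unless $i=j$.

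For the vanishing statement: if $|i-j|>1$, then none of the three surviving products $E^*_iRE^*_j$, $E^*_iLE^*_j$, $E^*_iE^*_j$ is nonzero, so $E^*_iAE^*_j=0$. This handles the first case immediately.

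For the nonvanishing statement: suppose $|i-j|=1$, say $j=i-1$ (the case $j=i+1$ is symmetric). Then $E^*_jAE^*_{j}$-type terms drop out and we get $E^*_iAE^*_{i-1}=E^*_iRE^*_{i-1}$, since the $L$-term needs $i=j-1$ and the diagonal term needs $i=j$, both false. Now $E^*_iRE^*_{i-1}=RE^*_{i-1}\neq 0$ by the inequality in Lemma~\ref{lem:RLE} (valid for $0\le i-1\le N-1$, i.e. $1\le i\le N$, which is exactly the range forced by $j=i-1\ge 0$ and $i\le N$). Similarly, for $j=i+1$ we obtain $E^*_iAE^*_{i+1}=\varphi(A^*)^{-1}E^*_{i+1}LE^*_{i+1}$... more carefully, $E^*_iAE^*_{i+1}=\varphi\,E^*_i(A^*)^{-1}LE^*_{i+1}=\varphi q^{i}E^*_iLE^*_{i+1}=\varphi q^{i}LE^*_{i+1}$, which is nonzero by Lemma~\ref{lem:RLE} (and $\varphi>0$).

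I expect no serious obstacle here; the only mild subtlety is bookkeeping the index ranges so that the nonvanishing clause of Lemma~\ref{lem:RLE} genuinely applies (one must check $i-1$ lies in $\{0,\dots,N-1\}$ when invoking $RE^*_{i-1}\neq 0$, and $i+1\in\{1,\dots,N\}$ when invoking $LE^*_{i+1}\neq 0$), but both follow automatically from $0\le i,j\le N$ and $|i-j|=1$. One should also note explicitly that $E^*_i(A^*)^{-1}=q^iE^*_i$ so the scalar factors are nonzero, using $q>1$ and $\varphi>0$.
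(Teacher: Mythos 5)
Your proof is correct, but it takes a somewhat different route from the paper's. The paper's own proof is a one-line appeal to Definitions~\ref{def:Eis} and~\ref{def:weightedA} ``and the construction'': since $E^*_iAE^*_j$ has $(y,z)$-entry equal to $A_{y,z}$ exactly when $\dim y=i$ and $\dim z=j$, the explicit entry formula for $A$ (nonzero only when $y,z$ are equal or adjacent, hence $|\dim y-\dim z|\leq 1$) gives the vanishing at once, and the existence of an adjacent pair of the right dimensions (Lemma~\ref{lem:yi}) gives the nonvanishing. You instead pass through the already-established algebraic layer: the decomposition $A=R+\varphi(A^*)^{-1}L+\frac{\varphi-1}{q-1}(A^*)^{-1}$ of Lemma~\ref{lem:Ashape} and the shifting relations $RE^*_i=E^*_{i+1}R\neq 0$, $LE^*_i=E^*_{i-1}L\neq 0$ of Lemma~\ref{lem:RLE}. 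Both routes ultimately rest on the same combinatorics (Lemma~\ref{lem:RLE}'s nonvanishing is itself proved from Lemma~\ref{lem:yi}), but yours isolates the three terms cleanly, tracks the scalar factors $q^i$ and $\varphi$ explicitly, and makes the index-range bookkeeping transparent; the paper's is shorter but leaves those details to the reader. Your argument is sound and the edge cases ($j=i-1$ forcing $1\leq i\leq N$, $j=i+1$ forcing $1\leq i+1\leq N$) are handled correctly.
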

\begin{proof} By Definitions \ref{def:Eis}, \ref{def:weightedA} and the construction.
\end{proof}

\begin{proposition} For $0 \leq i \leq N$,
\begin{align*}
A E^*_iV \subseteq E^*_{i-1}V + E^*_iV + E^*_{i+1}V.
\end{align*}
\end{proposition}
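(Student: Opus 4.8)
The plan is to decompose $A$ using Lemma~\ref{lem:Ashape} and then apply the action of each constituent term to $E^*_iV$, term by term.

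First I would write $A = R + \varphi (A^*)^{-1}L + \frac{\varphi-1}{q-1}(A^*)^{-1}$ as in Lemma~\ref{lem:Ashape}, and let $v \in E^*_iV$. For the first term, Lemma~\ref{lem:RaiseLower} gives $R\,v \in E^*_{i+1}V$ (interpreted as $0$ when $i=N$, consistent with $E^*_{N+1}=0$). For the second term, Lemma~\ref{lem:RaiseLower} gives $L\,v \in E^*_{i-1}V$ (and $0$ when $i=0$); since $(A^*)^{-1}$ is diagonal it stabilizes each $E^*_jV$ (indeed $(A^*)^{-1}$ acts as the scalar $q^j$ on $E^*_jV$ by \eqref{eq:AsAct} or \eqref{eq:sumAs}), so $\varphi (A^*)^{-1}L\,v \in E^*_{i-1}V$. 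For the third term, $(A^*)^{-1}$ acts as the scalar $q^i$ on $E^*_iV$, so $\frac{\varphi-1}{q-1}(A^*)^{-1}v \in E^*_iV$. Summing the three contributions yields $A\,v \in E^*_{i-1}V + E^*_iV + E^*_{i+1}V$, which is the claim.

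An equivalent and perhaps cleaner route is purely in terms of the $E^*_i$: by Lemma~\ref{lem:EE}, $I = \sum_{j=0}^N E^*_j$, so $A E^*_i = \sum_{j=0}^N E^*_j A E^*_i$, and Lemma~\ref{lem:triples} kills every summand with $|j-i|>1$, leaving $A E^*_i = E^*_{i-1}AE^*_i + E^*_iAE^*_i + E^*_{i+1}AE^*_i$; applying this to $v \in E^*_iV$ and noting $E^*_jAE^*_iV \subseteq E^*_jV$ gives the result. I would probably present this second version since it is shorter and uses only Lemmas already set up for exactly this purpose; the first version is a good sanity check.

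There is no real obstacle here: the statement is an immediate consequence of the block-tridiagonal structure already encoded in Lemma~\ref{lem:triples} (or in Lemmas~\ref{lem:Ashape} and \ref{lem:RaiseLower}), and the only thing to be slightly careful about is the boundary cases $i=0$ and $i=N$, which are handled automatically by the conventions $E^*_{-1}=0$ and $E^*_{N+1}=0$ from Definition~\ref{def:Eis}. The proof is therefore a one-line citation of the relevant lemmas.
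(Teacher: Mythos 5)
Your second (and preferred) argument — write $I=\sum_j E^*_j$, expand $AE^*_i=\sum_j E^*_jAE^*_i$, and invoke Lemma~\ref{lem:triples} to kill all terms with $|i-j|>1$ — is exactly the paper's proof. Your first route via Lemma~\ref{lem:Ashape} and Lemma~\ref{lem:RaiseLower} is a correct alternative, but you are right that the $E^*_i$-based version is cleaner and is what the paper does.
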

\begin{proof} By $I=\sum_{i=0}^N E^*_i$ and Lemma \ref{lem:triples},
\begin{align*}
A E^*_i &= (E^*_0 + E^*_1+\cdots + E^*_N) A E^*_i \\
              &= E^*_{i-1} A E^*_i + E^*_iAE^*_i + E^*_{i+1}A E^*_i.
\end{align*}
The result follows.
\end{proof}

\section{The algebra $T$}
\noindent We continue to discuss the poset $L_N(q)$. In this section, we recall the algebra $T$ from  \cite[Section~1]{uniform}. We give several generating sets for $T$. We also describe how $A, A^*$ are related.

\begin{definition}\rm \label{def:T} {\rm (See \cite[Section~1]{uniform}.)} Let $T$ denote the subalgebra of ${\rm Mat}_X(\mathbb C)$ generated by $R, L, \lbrace E^*_i\rbrace_{i=0}^N$.
\end{definition}

\begin{lemma} \label{lem:TAA} The algebra $T$ is generated by $A, A^*$.
\end{lemma}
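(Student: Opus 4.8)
The plan is to show the two subalgebras of ${\rm Mat}_X(\mathbb C)$ coincide by exhibiting each generating set inside the algebra generated by the other. Let $T' = \langle A, A^* \rangle$ denote the subalgebra generated by $A$ and $A^*$. Since $A^*$ generates $M^*$ (as noted after \eqref{eq:Ais}), all of $\lbrace E^*_i \rbrace_{i=0}^N$ lie in $T'$ via the Lagrange interpolation formula \eqref{eq:Ais}. Now Lemma \ref{eq:RLpoly} expresses $R = \sum_{i=1}^N E^*_i A E^*_{i-1}$ and $L = \varphi^{-1}\sum_{i=1}^N q^{1-i} E^*_{i-1} A E^*_i$ as polynomials in $A$ and the $E^*_i$, hence $R, L \in T'$. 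Therefore $T = \langle R, L, \lbrace E^*_i\rbrace\rangle \subseteq T'$.

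For the reverse inclusion, note $A^* = \sum_{i=0}^N q^{-i} E^*_i \in T$ by \eqref{eq:sumAs}, since each $E^*_i$ is a generator of $T$. Then $(A^*)^{-1} = \sum_{i=0}^N q^i E^*_i \in T$ as well, and by Lemma \ref{lem:Ashape},
\begin{align*}
A = R + \varphi (A^*)^{-1} L + \frac{\varphi-1}{q-1}(A^*)^{-1}
\end{align*}
is a polynomial in $R, L, (A^*)^{-1}$, all of which lie in $T$. Hence $A \in T$, so $T' = \langle A, A^*\rangle \subseteq T$. Combining the two inclusions gives $T = T'$.

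I expect no real obstacle here: the result is essentially a bookkeeping consequence of Lemmas \ref{lem:Ashape} and \ref{eq:RLpoly} together with the fact that $M^* \subseteq T$ is generated by $A^*$ via \eqref{eq:Ais}. The only point requiring a moment's care is that $(A^*)^{-1} \in T$ — this follows because $(A^*)^{-1}$ is a polynomial in $A^*$ (indeed a linear combination of the $E^*_i$), not merely because $A^*$ is invertible in the ambient matrix algebra. One should also remark that $M^* \subseteq T$ is immediate since the $E^*_i$ are among the defining generators of $T$ in Definition \ref{def:T}, which is what legitimizes using \eqref{eq:sumAs} and \eqref{eq:Ais} inside $T$.
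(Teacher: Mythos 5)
Your proof is correct and follows the same approach as the paper: show both inclusions, using \eqref{eq:sumAs} and Lemma \ref{lem:Ashape} to get $A, A^* \in T$, and using \eqref{eq:Ais} and Lemma \ref{eq:RLpoly} to express $\lbrace E^*_i\rbrace_{i=0}^N$, $R$, $L$ as polynomials in $A, A^*$. The paper's version is more terse but invokes the identical set of facts.
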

\begin{proof}  By \eqref{eq:sumAs} and Lemma \ref{lem:Ashape}, we have $A, A^* \in T$. By \eqref{eq:Ais} and Lemma \ref{eq:RLpoly}, each of $\lbrace E^*_i \rbrace_{i=0}^N$, $R$, $L$ is a polynomial in $A, A^*$.
\end{proof}

\begin{corollary} The algebra $T$ is generated by $M, M^*$.
\end{corollary}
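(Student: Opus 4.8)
The Corollary asserts that the algebra $T$ is generated by the subalgebras $M$ and $M^*$.

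The plan is to deduce this immediately from Lemma~\ref{lem:TAA} together with the definitions of $M$ and $M^*$. First I would recall that $M$ is by Definition~\ref{def:M} the subalgebra of ${\rm Mat}_X(\mathbb C)$ generated by $A$, so in particular $A \in M$; and $M^*$ is by Definition~\ref{def:Ms} the subalgebra generated by $\lbrace E^*_i\rbrace_{i=0}^N$, which by the comment following \eqref{eq:Ais} coincides with the subalgebra generated by $A^*$, so in particular $A^* \in M^*$. Let $T'$ denote the subalgebra of ${\rm Mat}_X(\mathbb C)$ generated by $M$ and $M^*$. Then $A \in M \subseteq T'$ and $A^* \in M^* \subseteq T'$, so $T'$ contains the subalgebra generated by $A, A^*$, which is $T$ by Lemma~\ref{lem:TAA}; hence $T \subseteq T'$. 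Conversely, by Lemma~\ref{lem:TAA} we have $A, A^* \in T$; since $T$ is an algebra it contains the subalgebra generated by $A$, namely $M$, and the subalgebra generated by $A^*$, namely $M^*$; therefore $T' \subseteq T$. Combining the two inclusions gives $T = T'$, which is the claim.

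There is no real obstacle here: the only point requiring a moment's care is the identification $M^* = \langle A^*\rangle$, which is exactly the content of the last sentence of Section~5 (that $A^*$ generates $M^*$), itself a consequence of \eqref{eq:sumAs} and \eqref{eq:Ais}. Everything else is the formal fact that the subalgebra generated by a union of subalgebras is generated by any generating sets of those subalgebras. So the proof is a one-line invocation of Lemma~\ref{lem:TAA} and Definitions~\ref{def:Ms}, \ref{def:M}.
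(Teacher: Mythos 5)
Your proof is correct and follows exactly the same route as the paper: both arguments rest on the identifications $M = \langle A\rangle$, $M^* = \langle A^*\rangle$, and Lemma~\ref{lem:TAA}. You simply spell out the two inclusions that the paper's one-sentence proof leaves implicit.
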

\begin{proof} The algebras $M$ and $M^*$ are generated by $A$ and $A^*$, respectively. The result follows in view of Lemma \ref{lem:TAA}.
\end{proof}

\noindent Next, we describe how $A, A^*$ are related. For notational convenience, abbreviate
\begin{align} 
\beta =  q+q^{-1}.      \label{eq:beta}
\end{align}
\begin{proposition} \label{prop:qSerre} We have
 \begin{align}
 A^{*3} A - (\beta+1) A^{*2} A A^* + (\beta+1) A^* A A^{*2} - A A^{*3} &=0. \label{eq:Serre}
\end{align}
\end{proposition}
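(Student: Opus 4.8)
The plan is to use the linearity of \eqref{eq:Serre} in $A$ together with the $q$-commutation relations of Lemma~\ref{lem:AsRL}. On ${\rm Mat}_X(\mathbb C)$ introduce the commuting linear operators $\ell$ and $r$ defined by $\ell(B)=A^*B$ and $r(B)=BA^*$; since $A^*$ is a fixed matrix, $\ell r=r\ell$. With $\beta=q+q^{-1}$ as in \eqref{eq:beta}, there is a factorization of commuting operators
\[
\ell^3-(\beta+1)\ell^2 r+(\beta+1)\ell r^2-r^3=(\ell-r)(\ell-q r)(\ell-q^{-1}r),
\]
which is merely the numerical identity $t^3-(\beta+1)t^2+(\beta+1)t-1=(t-1)(t-q)(t-q^{-1})$ transported to the two commuting variables $\ell,r$. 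Applying the operator on the left to $A$ reproduces exactly the left-hand side of \eqref{eq:Serre}, so it suffices to show that $A$ is annihilated by the product $(\ell-r)(\ell-q r)(\ell-q^{-1}r)$.

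For this I would invoke Lemma~\ref{lem:Ashape} to write $A=R+\varphi(A^*)^{-1}L+\tfrac{\varphi-1}{q-1}(A^*)^{-1}$ and check that each of the three summands is killed by one of the three pairwise commuting factors. The summand $\tfrac{\varphi-1}{q-1}(A^*)^{-1}$ commutes with $A^*$, hence is killed by $\ell-r$. The relation $A^*R=q^{-1}RA^*$ from Lemma~\ref{lem:AsRL} says precisely that $(\ell-q^{-1}r)(R)=0$. Finally, from $A^*L=qLA^*$ one obtains $LA^*=q^{-1}A^*L$, so $(A^*)^{-1}LA^*=q^{-1}L$, whence $A^*\bigl((A^*)^{-1}L\bigr)=L=q\bigl((A^*)^{-1}L\bigr)A^*$; that is, $(\ell-q r)\bigl((A^*)^{-1}L\bigr)=0$, and this persists after multiplying by the scalar $\varphi$.

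Because the three operators $\ell-r$, $\ell-q r$, $\ell-q^{-1}r$ commute, their product annihilates $A=R+\varphi(A^*)^{-1}L+\tfrac{\varphi-1}{q-1}(A^*)^{-1}$ term by term, which establishes \eqref{eq:Serre}. There is no real obstacle here: the only steps needing attention are the elementary polynomial factorization and the derivation of $A^*\bigl((A^*)^{-1}L\bigr)=q\bigl((A^*)^{-1}L\bigr)A^*$ from $A^*L=qLA^*$. Equivalently, and just as quickly, one can substitute the expression for $A$ directly into \eqref{eq:Serre} and move every copy of $A^*$ to the right using the $q$-commutation relations; the three resulting scalar coefficients are $(t-1)(t-q)(t-q^{-1})$ evaluated at $t=q^{-1}$, $t=q$, and $t=1$, each of which vanishes.
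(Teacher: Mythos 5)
Your proof is correct, and it takes a genuinely different route from the paper's. The paper proves the relation by sandwiching the left-hand side $C$ between the idempotents $E^*_i$ and $E^*_j$: writing $C = \sum_{i,j} E^*_i C E^*_j$ and computing
\[
E^*_i C E^*_j = E^*_i A E^*_j \,(\theta^*_i - \theta^*_j)(\theta^*_i - q\theta^*_j)(\theta^*_i - q^{-1}\theta^*_j),
\]
then observing that for $\lvert i-j\rvert>1$ the factor $E^*_i A E^*_j$ vanishes by Lemma~\ref{lem:triples}, while for $\lvert i-j\rvert\le 1$ one of the scalar factors vanishes because $\theta^*_i=q^{-i}$. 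You instead decompose $A$ itself via Lemma~\ref{lem:Ashape} into the three summands $R$, $\varphi(A^*)^{-1}L$, $\tfrac{\varphi-1}{q-1}(A^*)^{-1}$, note that the left-hand side of~\eqref{eq:Serre} is $(\ell-r)(\ell-qr)(\ell-q^{-1}r)$ applied to $A$ for the commuting operators $\ell,r$ of left and right multiplication by $A^*$, and check that each summand is killed by one of the three commuting factors via the $q$-commutation relations of Lemma~\ref{lem:AsRL}. The paper's argument leans on the block-tridiagonal structure and the eigenvalue data of $A^*$; yours leans only on the raising/lowering decomposition and the exact commutation relations, which makes it a bit cleaner and incidentally shows that~\eqref{eq:Serre} would hold for any linear combination of $R$, $(A^*)^{-1}L$, and $(A^*)^{-1}$, not just the particular one defining $A$. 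It also foreshadows nicely the quantum-group interpretation in Section~13, where the same relation is deduced from $U_{q^{1/2}}(\mathfrak{sl}_2)$ via Lemma~\ref{lem:UqNeed}.
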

\begin{proof} Let $C$ denote the expression on the left in \eqref{eq:Serre}. We show that $C=0$. Observe that
\begin{align*}
C = I C I =  \Biggl( \sum_{i=0}^N E^*_i \Biggr)  C          \Biggl( \sum_{j=0}^N E^*_j \Biggr)  =\sum_{i=0}^N \sum_{j=0}^N E^*_i C E^*_j.
\end{align*}
For $0 \leq i,j\leq N$ we show that $E^*_i C E^*_j=0$.  Using $E^*_i A^*=\theta^*_i E^*_i$ and $A^* E^*_j = \theta^*_j E^*_j$ along with \eqref{eq:beta}, we obtain
\begin{align*}
E^*_i C E^*_j &= E^*_i A E^*_j \Bigl(\theta^{* 3}_i - (\beta+1) \theta^{*2}_i \theta^*_j +(\beta+1) \theta^*_i \theta^{*2}_j - \theta^{*3}_j \Bigr)  \nonumber \\
                       &= E^*_i A E^*_j (\theta^*_i - \theta^*_j)(\theta^*_i -q \theta^*_j)(\theta^*_i - q^{-1} \theta^*_j).
\end{align*}
We examine the factors in the previous line.
If  $\vert i-j \vert >1$, then $E^*_i A E^*_j=0$ by Lemma \ref{lem:triples}. If $i-j=1$, then $\theta^*_i = q^{-1}\theta^*_j$.
If $i-j=-1$, then $\theta^*_i = q\theta^*_j$.
If $i-j=0$, then of course $\theta^*_i = \theta^*_j$.
In every case $E^*_i C E^*_j=0$. The result follows.
\end{proof}

\noindent For notational convenience, abbreviate
\begin{align} 
      \varrho = \varphi q^{N-2}(q+1)^2.        \label{eq:rhoval}
\end{align}

\noindent  In a moment, we will show that
\begin{align*}
 A^3 A^* - (\beta+1) A^2 A^*& A + (\beta+1) A A^* A^2 -A^* A^3 =\varrho (A A^*-A^* A), 
 \end{align*}
where $\beta=q+q^{-1}$ and $\varrho$ is from    \eqref{eq:rhoval}. The following definition and lemma will simplify the calculations.
\begin{definition}\rm \label{def:Apm} Define
\begin{align*}
A^+ =  \frac{(A^*)^{-1}}{q-1}-R, \qquad \qquad
A^- =  \frac{(A^*)^{-1}}{q-1} +(A^*)^{-1} L.
\end{align*}
\end{definition}
\begin{lemma}\label{lem:UqRel} We have
\begin{align}
A =  \varphi A^{-}-A^+. \label{eq:AAA}
\end{align}
Moreover
\begin{align}
&q A^-  A^* - A^* A^- = I, \qquad \qquad  q A^* A^+  -A^+ A^*= I, \label{eq:UqOne}\\
&   q A^+ A^-   -A^- A^+ = \frac{q^N I}{q-1}. \label{eq:UqTwo}
\end{align}
\end{lemma}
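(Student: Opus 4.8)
The plan is to prove \eqref{eq:AAA} by direct substitution, and then to prove \eqref{eq:UqOne}, \eqref{eq:UqTwo} by expanding $A^+$, $A^-$ in terms of $R$, $L$, $A^*$, $(A^*)^{-1}$ and invoking the commutation relations in Lemma \ref{lem:AsRL} together with the scalar identity $\lbrack N \rbrack_q = (q^N-1)/(q-1)$. Each of the three relations reduces to a short computation once the defining expressions are inserted, so the main work is bookkeeping rather than any conceptual difficulty.

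\textbf{Step 1: the expression for $A$.} First I would compute
\begin{align*}
\varphi A^- - A^+ = \varphi\left(\frac{(A^*)^{-1}}{q-1} + (A^*)^{-1}L\right) - \left(\frac{(A^*)^{-1}}{q-1} - R\right) = R + \varphi(A^*)^{-1}L + \frac{\varphi-1}{q-1}(A^*)^{-1},
\end{align*}
which is exactly the expression for $A$ in Lemma \ref{lem:Ashape}. This gives \eqref{eq:AAA}.

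\textbf{Step 2: the two ``$q$-Weyl'' relations in \eqref{eq:UqOne}.} For the first relation, I would use $A^- = (A^*)^{-1}/(q-1) + (A^*)^{-1}L$. Since $(A^*)^{-1}$ commutes with $A^*$, the term $(A^*)^{-1}/(q-1)$ contributes $q\cdot\frac{(A^*)^{-1}}{q-1}A^* - A^*\cdot\frac{(A^*)^{-1}}{q-1} = \frac{q-1}{q-1}I = I$ to $qA^-A^* - A^*A^-$. For the remaining term I need $q(A^*)^{-1}L A^* - A^*(A^*)^{-1}L$; using $A^*L = qLA^*$ (equivalently $LA^* = q^{-1}A^*L$, so $(A^*)^{-1}LA^* = q^{-1}(A^*)^{-1}A^*L = q^{-1}L$) this becomes $q\cdot q^{-1}L - L = 0$. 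Hence $qA^-A^* - A^*A^- = I$. The second relation $qA^*A^+ - A^+A^* = I$ is handled the same way: the scalar term again gives $I$, and the $R$-term needs $-qA^*R + RA^* = 0$, which follows from $A^*R = q^{-1}RA^*$.

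\textbf{Step 3: the relation \eqref{eq:UqTwo}.} I would expand $qA^+A^- - A^-A^+$ with $A^+ = \frac{(A^*)^{-1}}{q-1} - R$ and $A^- = \frac{(A^*)^{-1}}{q-1} + (A^*)^{-1}L$. Writing $D = (A^*)^{-1}/(q-1)$, this is $q(D-R)(D + (A^*)^{-1}L) - (D + (A^*)^{-1}L)(D-R)$. The $DD$ terms cancel against each other since $D$ commutes with itself, leaving cross terms. Collecting, one gets a combination of $D(A^*)^{-1}L$, $RD$, $R(A^*)^{-1}L$ and their reorderings; using $A^*R = q^{-1}RA^*$ and $A^*L = qLA^*$ repeatedly to move all $A^*$-factors to one side, and finally the third relation $LR - RL = \frac{q^NA^* - (A^*)^{-1}}{q-1}$ from Lemma \ref{lem:AsRL}, the expression should collapse to $\frac{q^N}{q-1}I$. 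The main obstacle — and it is a modest one — is keeping the coefficients of $(A^*)^{-1}$, $A^*$, and $I$ straight through the repeated application of the $q$-commutation rules, since the $R(A^*)^{-1}L$ and $(A^*)^{-1}LR$ terms both produce $LR$- and $RL$-type contributions that must be combined via Lemma \ref{lem:AsRL} before the $A^*$-dependent pieces cancel. I would organize this by first normal-ordering every product so that all $(A^*)^{\pm 1}$ factors sit on the left, then reading off the $I$-coefficient.
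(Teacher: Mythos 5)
Your proposal follows the same route as the paper: eliminate $A^+, A^-$ via Definition \ref{def:Apm} and evaluate using Lemma \ref{lem:Ashape} for \eqref{eq:AAA} and Lemma \ref{lem:AsRL} for \eqref{eq:UqOne}, \eqref{eq:UqTwo}; your Steps 1 and 2 are carried out correctly. One slip in Step 3: writing $D=(A^*)^{-1}/(q-1)$, the $D^2$ terms do \emph{not} cancel, since $qD^2 - D^2 = (q-1)D^2 = (A^*)^{-2}/(q-1) \neq 0$. That leftover $(A^*)^{-2}/(q-1)$ is instead cancelled by the $-(A^*)^{-2}/(q-1)$ that emerges from $-qR(A^*)^{-1}L + (A^*)^{-1}LR = (A^*)^{-1}(LR-RL) = \bigl(q^N I - (A^*)^{-2}\bigr)/(q-1)$ after applying the third relation in Lemma \ref{lem:AsRL}. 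Carrying out the normal-ordering you describe, the $qD(A^*)^{-1}L - (A^*)^{-1}LD$ and $-qRD + DR$ pieces each vanish, and the two $(A^*)^{-2}$ contributions cancel, leaving exactly $\frac{q^N}{q-1}I$ — so the plan is sound, but the cancellation happens between different pairs of terms than you indicated.
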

\begin{proof} To verify \eqref{eq:AAA}, eliminate $A^+, A^-$  using Definition \ref{def:Apm} and evaluate the result using Lemma \ref{lem:Ashape}.
To verify \eqref{eq:UqOne} and \eqref{eq:UqTwo}, eliminate $A^+, A^-$  using Definition \ref{def:Apm} and evaluate the result using Lemma \ref{lem:AsRL}.
\end{proof}

\begin{proposition} \label{thm:TDrel} We have
\begin{align}
 A^3 A^* - (\beta+1) A^2 A^*& A + (\beta+1) A A^* A^2 -A^* A^3 =\varrho (A A^*-A^* A), \label{eq:TDrel}
 \end{align}
where $\beta=q+q^{-1}$ and $\varrho$ is from    \eqref{eq:rhoval}.
\end{proposition}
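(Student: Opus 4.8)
The plan is to reduce the tridiagonal relation \eqref{eq:TDrel} to the $q$-Serre-type relation \eqref{eq:Serre} already established in Proposition \ref{prop:qSerre}, by exploiting the factorization $A = \varphi A^- - A^+$ from \eqref{eq:AAA} together with the quantum-group-style commutation relations \eqref{eq:UqOne}, \eqref{eq:UqTwo} of Lemma \ref{lem:UqRel}. The idea is that the tridiagonal relation for $(A,A^*)$ is really the ``dual'' version of \eqref{eq:Serre}: relations \eqref{eq:UqOne} say that $A^-$ and $A^+$ each satisfy a clean $q$-commutation with $A^*$, so the cubic expression on the left of \eqref{eq:TDrel}, expanded via $A = \varphi A^- - A^+$, should collapse using only \eqref{eq:UqOne} and \eqref{eq:UqTwo} plus the known relation \eqref{eq:Serre}.

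First I would set $C$ equal to the left-hand side minus the right-hand side of \eqref{eq:TDrel} and, as in the proof of Proposition \ref{prop:qSerre}, write $C = \sum_{i=0}^N \sum_{j=0}^N E^*_i C E^*_j$ and show $E^*_i C E^*_j = 0$ for all $i,j$. Here the advantage is that $E^*_i$ is the primitive idempotent of $A^*$ for $\theta^*_i = q^{-i}$, so every $A^*$ in $C$ gets replaced by a scalar on the appropriate side, and $E^*_i A^n E^*_j$ can be controlled by the block-tridiagonal action: by Lemma \ref{lem:triples} and Proposition in Section 7, $E^*_i A E^*_j = 0$ when $|i-j|>1$, hence $E^*_i A^2 E^*_j=0$ when $|i-j|>2$ and $E^*_i A^3 E^*_j=0$ when $|i-j|>3$. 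So only the cases $|i-j|\le 3$ need checking, and the scalar prefactors (polynomials in $\theta^*_i,\theta^*_j$ analogous to $(\theta^*_i-\theta^*_j)(\theta^*_i-q\theta^*_j)(\theta^*_i-q^{-1}\theta^*_j)$ on the left, and $(\theta^*_i-\theta^*_j)$ on the right) must be reconciled. The cleanest route is probably to avoid the case analysis entirely and instead give a direct algebraic manipulation: substitute $A = \varphi A^- - A^+$ into the left side of \eqref{eq:TDrel}, expand, and repeatedly push $A^*$ past $A^-$ and $A^+$ using \eqref{eq:UqOne} in the forms $A^* A^- = q A^- A^* - I$ and $A^+ A^* = q A^* A^+ - I$, collecting terms until one sees the combination that is $\varrho(AA^* - A^*A)$ plus a multiple of the operator identity \eqref{eq:Serre}.

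The main obstacle I anticipate is bookkeeping: the cubic $A^3A^* - (\beta+1)A^2A^*A + (\beta+1)AA^*A^2 - A^*A^3$ expands into eight terms before substituting $A = \varphi A^- - A^+$, and after substitution one has $2^3 = 8$ monomials in $\{A^-, A^+\}$ for each, so a large number of terms whose $A^*$-normal-ordering produces lower-degree correction terms via $I$ and via $\frac{q^N}{q-1}I$ from \eqref{eq:UqTwo}. Keeping $\varphi$ symbolic means tracking which powers of $\varphi$ attach to which monomials; the coefficient $\varrho = \varphi q^{N-2}(q+1)^2$ on the right has a single power of $\varphi$, which is a useful consistency check (the $\varphi^3$, $\varphi^2$, $\varphi^0$ pieces must vanish on their own or combine into the $q$-Serre relation, leaving only the $\varphi^1$ piece to match $\varrho(AA^*-A^*A)$). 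A sensible simplification is to first prove the $\varphi=1$ case (or to observe it follows from known results about the unweighted $L_N(q)$, e.g. from \cite{uniform}) and then argue the general case by tracking the $\varphi$-dependence, or alternatively to prove the identity separately for $A^-$ and $A^+$ in place of $A$ — i.e., show each of $A^\pm$ satisfies a tridiagonal relation with $A^*$ with its own $\varrho$-parameter — and then combine. Given the paper's style, however, the expected proof is the terse one: reduce to showing $E^*_i C E^*_j = 0$ by the eigenvalue/block-tridiagonality argument, citing Proposition \ref{prop:qSerre} and Lemma \ref{lem:triples}, and dispatch the finitely many residual cases with the explicit $\theta^*_i, \theta^*_j$ scalars.
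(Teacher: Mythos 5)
You describe two possible routes and end up recommending the wrong one. The approach you flag in passing as ``the cleanest route'' --- substitute $A = \varphi A^- - A^+$ from \eqref{eq:AAA} into the left side of \eqref{eq:TDrel} and simplify using \eqref{eq:UqOne}, \eqref{eq:UqTwo} --- is in fact the paper's entire proof. Your concern about bookkeeping is legitimate but manageable, and the paper treats it as a routine computation. So that part of your plan is on target.

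The route you ultimately settle on as ``the expected proof,'' namely decomposing $C = \sum_{i,j} E^*_i C E^*_j$ and extracting scalar prefactors in $\theta^*_i,\theta^*_j$, does not work here, and the reason is structural: in \eqref{eq:TDrel} the roles of $A$ and $A^*$ are reversed relative to \eqref{eq:Serre}. In \eqref{eq:Serre} each term has a single $A$ sandwiched between powers of $A^*$, so writing $E^*_i C E^*_j$ converts every $A^*$ into $\theta^*_i$ or $\theta^*_j$ and leaves a common factor $E^*_i A E^*_j$ with a scalar coefficient. In \eqref{eq:TDrel} each term has a single $A^*$ flanked by powers of $A$; for the middle terms $E^*_i A^2 A^* A E^*_j$ and $E^*_i A A^* A^2 E^*_j$, the $A^*$ is interior and cannot be replaced by $\theta^*_i$ or $\theta^*_j$. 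Inserting $I = \sum_k E^*_k$ to move $A^*$ next to an idempotent produces a double sum over intermediate indices $(k,l)$, with coefficients $\theta^*_j-(\beta+1)\theta^*_l+(\beta+1)\theta^*_k-\theta^*_i$ depending on $(k,l)$, and no single common prefactor multiplying one matrix $E^*_i(\cdots)E^*_j$. So the ``reconcile the scalar prefactors'' step you envision has no analogue. The correct $E$-decomposition argument for \eqref{eq:TDrel} uses the primitive idempotents $E_i$ of $A$ (so that $A$ becomes a scalar and $E_i A^* E_j$ is the common factor); the paper carries out exactly that computation in Lemma~\ref{lem:EAsEZ}, but it is only available after $A$ is shown diagonalizable with known eigenvalues in Sections 10--11. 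Invoking it in Section 8 would be circular, which is presumably why the paper proves Proposition~\ref{thm:TDrel} by direct substitution instead.
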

\begin{proof}  
To verify  \eqref{eq:TDrel}, eliminate $A$ using \eqref{eq:AAA} and evaluate the result using \eqref{eq:UqOne}, \eqref{eq:UqTwo}.
\end{proof}

\begin{remark}  \rm The relations \eqref{eq:Serre}, \eqref{eq:TDrel} are a special case of the tridiagonal relations, see \cite[Lemma~5.4]{tSub3} and  \cite[Definition~3.9]{qSerre}.
\end{remark}

\section{The irreducible $T$-modules}

\noindent We continue to discuss the poset $L_N(q)$. Recall the algebra $T$ from Definition \ref{def:T}.
In this section, we describe the irreducible $T$-modules.
\medskip

\noindent
 Let $W$ denote  a $T$-module. Then $W$ is called 
{\it irreducible} whenever $W \not=0$ and $W$ does not contain a $T$-submodule besides $0$ and $W$.

\begin{lemma} \label{lem:ods} {\rm (See \cite[Theorem~2.5(1)]{uniform}.)} The standard module $V$ is an orthogonal direct sum of irreducible $T$-submodules.
\end{lemma}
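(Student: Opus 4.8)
The plan is to exploit the fact that $T$ is a $*$-algebra, i.e. closed under the conjugate-transpose map, so that the orthogonal complement of any $T$-submodule is again a $T$-submodule. Once that is established, the decomposition follows by a routine induction on dimension.

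\medskip

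\noindent\textbf{Step 1: $T$ is closed under conjugate-transpose.} Recall $T$ is generated by $R,L,\lbrace E^*_i\rbrace_{i=0}^N$. Each $E^*_i$ is a real diagonal matrix, hence $(E^*_i)^t = \overline{E^*_i} = E^*_i$, so $E^*_i$ is self-adjoint with respect to $\langle\,,\,\rangle$. By Definition \ref{def:RL}, $R^t = L$, and both $R,L$ have real (indeed $0/1$) entries, so the conjugate-transpose of $R$ is $L$ and vice versa. Therefore the set of generators of $T$ is stable under $B \mapsto \overline{B}^t$; since this map is an anti-automorphism of ${\rm Mat}_X(\mathbb C)$ fixing $I$, it follows that $T$ itself is stable under it. (Here I use that $\langle Bu,v\rangle = \langle u, \overline{B}^t v\rangle$ for all $u,v\in V$ and $B\in{\rm Mat}_X(\mathbb C)$, which is immediate from $\langle u,v\rangle = u^t\overline v$.)

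\medskip

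\noindent\textbf{Step 2: orthogonal complements of submodules are submodules.} Let $W\subseteq V$ be a $T$-submodule and let $W^\perp = \lbrace v\in V : \langle v,w\rangle = 0 \text{ for all } w\in W\rbrace$. For $B\in T$ and $v\in W^\perp$, and any $w\in W$, we have $\langle Bv, w\rangle = \langle v, \overline B^t w\rangle$. By Step 1, $\overline B^t\in T$, so $\overline B^t w\in W$, whence $\langle v,\overline B^t w\rangle = 0$. Thus $Bv\in W^\perp$, so $W^\perp$ is a $T$-submodule. Since $\langle\,,\,\rangle$ is a positive-definite Hermitean form (its Gram matrix in the basis $\lbrace\hat x\rbrace$ is $I$), we have $V = W \oplus W^\perp$ as an orthogonal direct sum.

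\medskip

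\noindent\textbf{Step 3: induction.} Now induct on $\dim V$. If $V$ is irreducible (in particular if $\dim V = 1$), we are done. Otherwise $V$ has a $T$-submodule $W$ with $0\neq W\neq V$; choose such a $W$ of minimal positive dimension, so $W$ is irreducible. By Step 2, $V = W\oplus W^\perp$ orthogonally, and $W^\perp$ is a nonzero $T$-submodule of strictly smaller dimension. By induction $W^\perp$ is an orthogonal direct sum of irreducible $T$-submodules, and appending $W$ gives the desired decomposition of $V$.

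\medskip

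\noindent The only point requiring any care is Step 1 — verifying that the chosen generating set of $T$ is visibly self-adjoint — but this is immediate from $R^t = L$ and the reality of all the generators' entries; everything else is the standard semisimplicity-via-unitarity argument. (Alternatively, one could cite \cite[Theorem~2.5(1)]{uniform} directly, as the lemma statement does.)
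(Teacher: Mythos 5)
Your proof is correct and follows the same approach as the paper: verify that the generators $R$, $L$, $\lbrace E^*_i\rbrace_{i=0}^N$ of $T$ are closed under conjugate-transpose (using $R^t=L$, reality of entries, and symmetry of $E^*_i$), conclude $T$ is a $*$-algebra, and then invoke the standard orthogonal-complement-plus-induction argument. The paper compresses Steps 2 and 3 into the phrase ``the result is a routine consequence of this,'' which you have simply spelled out.
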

\begin{proof} Each of the $T$-generators $R, L, \lbrace E^*_i \rbrace_{i=0}^N$ has real entries. We have $R^t=L$. Moreover, $E^*_i$ is symmetric for $0 \leq i \leq N$.
By these comments,  $T$ is closed under the conjugate-transpose map.
The result is a routine consequence of this.
\end{proof}

\noindent Let $W$ denote an irreducible $T$-submodule of $V$. Observe that $W$ is the orthogonal direct sum of the nonzero subspaces among $\lbrace E^*_i W \vert 0 \leq i \leq N\rbrace$.
By the {\it endpoint} of $W$, we mean
\begin{align*}
{\rm min}\lbrace i \vert 0 \leq i \leq N, E^*_i W \not= 0\rbrace.
\end{align*}
\noindent By the {\it diameter} of $W$, we mean
\begin{align*}
\bigl\vert \lbrace i \vert 0 \leq i \leq N, \; E^*_iW \not=0\rbrace \bigr\vert -1.
\end{align*}
We say that $W$ is {\it thin} whenever  ${\rm dim}\,E^*_iW\leq 1$  for $0 \leq i \leq N$.

\begin{lemma} \label{lem:Wshape} {\rm (See \cite[Theorem~3.3(5)]{uniform}.)} Let $W$ denote an irreducible $T$-submodule of $V$. Then $W$ is thin. The endpoint $r$ and diameter $d$ of $W$ satisfy
$0 \leq r \leq N/2$ and $d=N-2r$.
\end{lemma}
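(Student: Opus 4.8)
The plan is to analyze the structure of an irreducible $T$-submodule $W$ by combining the action of $R, L$ (which move up and down the grading by $\{E^*_i\}$) with the commutation relation for $LR - RL$ in Lemma~\ref{lem:AsRL}. First I would observe, using Lemma~\ref{lem:TAA} and $\eqref{eq:sumAs}$, $\eqref{eq:Ais}$, $\eqref{eq:RLpoly}$, that $T$ is also generated by $R, L, A^*$, and that since $A^*$ acts on $E^*_iV$ as the scalar $q^{-i}$, every $E^*_iW$ is an eigenspace of $A^*\big|_W$; hence $W = \sum_i E^*_iW$ is the eigenspace decomposition and $W$ is $M^*$-invariant automatically. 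Let $r$ be the endpoint of $W$ and let $e$ be the largest index with $E^*_eW\neq 0$. The key intermediate claim is that $E^*_iW\neq 0$ for \emph{every} $i$ with $r\le i\le e$, and that each such $E^*_iW$ is one-dimensional; i.e. $W$ is thin with no gaps, so $d = e - r$.

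To prove this I would argue as follows. Fix a nonzero $w \in E^*_rW$. Using Lemma~\ref{lem:RLE}, $L w \in E^*_{r-1}W$, but minimality of $r$ forces $Lw = 0$; so $E^*_rW \subseteq \ker L$. Then I would compute, for the vector $R^j w \in E^*_{r+j}W$, the value of $L R^j w$ by repeatedly applying the relation $LR = RL + \frac{q^N A^* - (A^*)^{-1}}{q-1}$ from Lemma~\ref{lem:AsRL} and the eigenvalue action $\eqref{eq:AsAct}$ of $A^*$ on graded pieces. This gives $L R^j w = c_j R^{j-1} w$ for an explicit scalar $c_j$ (a telescoping sum of terms $\frac{q^{N-r-k} - q^{r+k}}{q-1}$, $0\le k\le j-1$, i.e. essentially $\sum_{k} [N - r - k]_q$ shifted). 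The crucial point is that $c_j \neq 0$ precisely when $1 \le j \le N - 2r$: the factor $q^{N-r-k}-q^{r+k}$ vanishes only when $r+k = N-r-k$, and one checks the accumulated sum $c_j$ stays nonzero until $j$ reaches $N-2r+1$. Since $W$ is irreducible and $R, L, \{E^*_i\}$ generate $T$, the subspace $\mathrm{Span}\{w, Rw, R^2w, \dots\}$ — which is $L$-invariant (by the $c_j$ computation), $R$-invariant, and $E^*_i$-invariant — must be all of $W$; the nonvanishing of $c_j$ up to $j = N-2r$ then shows $R^jw \neq 0$ for $0\le j\le N-2r$ and $R^{N-2r+1}w = 0$ (its image under $L$ vanishes and one checks it is forced to be $0$ by irreducibility, or directly by a norm/adjointness argument using $R^t = L$). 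Hence $W$ has basis $\{R^jw\}_{j=0}^{N-2r}$, so $W$ is thin, the endpoint is $r$, and $d = N - 2r$; this also forces $e = N - r \le N$, hence $r \le N/2$.

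An alternative, perhaps cleaner route for the thinness and the gap-freeness is to invoke the equitable-presentation machinery already set up: by Lemma~\ref{lem:UqRel} the operators $A^-, A^+, A^*$ satisfy the defining relations making $V$ a $U_{q^{1/2}}(\mathfrak{sl}_2)$-module (after the rescaling in the table following Lemma~\ref{lem:U2module}), and $T$ sits inside the image of this $U_{q^{1/2}}(\mathfrak{sl}_2)$-action together with $M^*$. Then $W$ decomposes into irreducible $U_{q^{1/2}}(\mathfrak{sl}_2)$-modules, which in the equitable presentation are the finite-dimensional highest-weight modules classified in \cite{equit}; on each such module the weight spaces (here the $E^*_iW$) are one-dimensional and form an unbroken string, giving thinness and $d = N - 2r$ directly, with $r \le N/2$ coming from the constraint that the weights must lie among $\theta^*_0, \dots, \theta^*_N$. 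I would likely present the elementary $R, L$ argument as the main proof since it is self-contained given only Lemma~\ref{lem:AsRL}.

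The main obstacle I anticipate is the bookkeeping in the scalar $c_j$: one must show not merely that the individual factors $q^{N-r-k}-q^{r+k}$ are nonzero for the relevant range, but that their accumulated (telescoped) sum never vanishes for $1 \le j \le N-2r$ and does the right thing at $j = N-2r+1$. Since $q > 1$ is a prime power, all the relevant $q$-integers $[m]_q$ are positive for $m \ge 1$, which makes the sum of positive terms manifestly nonzero — so the obstacle is really just organizing the induction cleanly rather than a genuine difficulty. A secondary point requiring care is verifying that the span of the $R^jw$ is genuinely a $T$-submodule (closure under $L$ needs the $c_j$ identity, closure under each $E^*_i$ is immediate from the grading), so that irreducibility can be applied to conclude it equals $W$.
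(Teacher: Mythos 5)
The paper does not actually prove this lemma; it simply cites Theorem 3.3(5) of Terwilliger's 1990 uniform poset paper, so there is no in-paper argument to compare against. Your self-contained proof via the string $\lbrace R^j w\rbrace$ starting from a lowest-weight vector $w \in E^*_rW$ with $Lw=0$, together with the commutation relation from Lemma \ref{lem:AsRL}, is the standard argument specialized to $L_N(q)$, and it is correct in structure. One inaccuracy worth flagging: you justify $c_j\neq 0$ by saying it is ``a sum of positive terms,'' but the summand $\frac{q^{N-r-k}-q^{r+k}}{q-1}$ has the sign of $N-2r-2k$, so roughly half the terms become negative as $j$ approaches $N-2r$; the appeal to positivity does not hold. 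The conclusion is nevertheless correct because the telescoped sum has the closed form $c_j = q^r\,[j]_q\,[\,N-2r+1-j\,]_q$, which vanishes exactly for $j\in\lbrace 0,\,N-2r+1\rbrace$ and is nonzero for $1\le j\le N-2r$; this closed form is precisely the entry $x_j(W)$ recorded in Lemma \ref{lem:Wrep}, so your computation is consistent with the paper's data. It also makes the vanishing of $R^{N-2r+1}w$ and the bound $r\le N/2$ fall out of a downward induction from $R^{N-r+1}w\in E^*_{N+1}V=0$ without needing a separate irreducibility or adjointness step. Your alternative route via $U_{q^{1/2}}(\mathfrak{sl}_2)$ can be sharpened as well: since $T$ is generated by $R,L,A^*$ and $(A^*)^{-1}$ is a polynomial in $A^*$ (being a finite-dimensional algebra), $T$ coincides with the image of $U_{q^{1/2}}(\mathfrak{sl}_2)$ in ${\rm Mat}_X(\mathbb C)$ under the Chevalley action of Lemma \ref{lem:Umodule}. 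Hence an irreducible $T$-submodule $W$ is outright an irreducible finite-dimensional $U_{q^{1/2}}(\mathfrak{sl}_2)$-module (no further decomposition is needed), and the one-dimensional weight spaces forming an unbroken string give thinness and $d=N-2r$ directly. Either version of your proof supplies a genuine argument where the paper only cites.
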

\noindent We will be discussing matrix representations, using the notational conventions of \cite[p.~5]{nomSpinModel}.
\begin{lemma}
\label{lem:Wrep} {\rm (See \cite[Theorems 2.5 and 3.3(5)]{uniform}.)} Let $W$ denote an irreducible $T$-submodule of $V$. Let $r$ denote the endpoint of $W$, and recall the diameter $d=N-2r$. Then there exists a basis
for $W$ with respect to which the matrices representing  $A^*, R, L$ are
\begin{align*}
&A^*: \quad {\rm diag} (q^{-r}, q^{-r-1}, \ldots, q^{-r-d} ), \\
&R: \quad \begin{pmatrix} 0 & 0 & &&&{\bf 0}  \\
                                             1 & 0 & 0 & &&\\
                                              & 1 & \cdot & \cdot && \\
                                              && \cdot &\cdot &\cdot &\\
                                              &&&\cdot &\cdot &0\\
                                              {\bf 0} &&&&1&0
                                             \end{pmatrix},   \\        
     &L: \quad \begin{pmatrix} 0 & x_1(W) & &&&{\bf 0}  \\
                                             0 & 0 & x_2(W) & &&\\
                                              & 0 & \cdot & \cdot && \\
                                              && \cdot &\cdot &\cdot &\\
                                              &&&\cdot &\cdot &x_d(W)\\
                                              {\bf 0} &&&&0&0
                                             \end{pmatrix},   \\                                                                                                                    
\end{align*}
\noindent where
\begin{align*}
x_i(W) &=  q^{1-i} \, \frac{q^i-1}{q-1} \, \frac{q^{N-r}-q^{i+r-1}}{q-1}   \qquad \quad (1 \leq i \leq d).
\end{align*}
\end{lemma}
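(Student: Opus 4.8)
The plan is to construct the claimed basis for $W$ explicitly and then verify that each of $A^*$, $R$, $L$ acts as stated. Since $W$ is a thin irreducible $T$-module with endpoint $r$ and diameter $d=N-2r$ (Lemma \ref{lem:Wshape}), the nonzero subspaces among $\lbrace E^*_iW\rbrace$ are precisely $E^*_{r}W, E^*_{r+1}W,\ldots,E^*_{r+d}W$, and each is one-dimensional. First I would pick a nonzero vector $w_0\in E^*_rW$ and define $w_i = R^i w_0$ for $0\le i\le d$. Using Lemma \ref{lem:RLE} (specifically $RE^*_i=E^*_{i+1}R$ and the inequalities there, together with thinness), one checks by induction that $w_i$ spans $E^*_{r+i}W$ and in particular is nonzero; this is where irreducibility is used, since $T$ is generated by $R,L,\lbrace E^*_i\rbrace$ and one must know the $R$-images exhaust the module. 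So $\lbrace w_i\rbrace_{i=0}^d$ is a basis for $W$.

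With this basis in hand, the action of $A^*$ is immediate: $w_i\in E^*_{r+i}W$, so $A^*w_i = q^{-r-i}w_i$ by \eqref{eq:sumAs}, giving the stated diagonal matrix. The action of $R$ is also immediate from the definition $w_{i+1}=Rw_i$ together with $RE^*_{r+d}W\subseteq E^*_{r+d+1}W=0$, i.e. $Rw_d=0$; this yields the displayed subdiagonal matrix with all entries $1$. The remaining task is the action of $L$. Since $L E^*_{r+i}W\subseteq E^*_{r+i-1}W$ and each space is one-dimensional, we have $Lw_i = x_i(W)\, w_{i-1}$ for some scalar $x_i(W)$, with $Lw_0=0$ (as $LE^*_r W=0$ when $r$ is the endpoint — here one should note $E^*_{-1}=0$ handles $r=0$, and for $r\ge 1$ one uses $E^*_{r-1}W=0$ by minimality of the endpoint). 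So the shape of the $L$-matrix is forced; what must be computed is the scalar $x_i(W)$.

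The main work, and the main obstacle, is to derive the closed form $x_i(W)=q^{1-i}\frac{q^i-1}{q-1}\cdot\frac{q^{N-r}-q^{i+r-1}}{q-1}$. The natural approach is a recursion: apply the commutation relation from Lemma \ref{lem:AsRL}, namely $LR-RL = \frac{q^NA^*-(A^*)^{-1}}{q-1}$, to the vector $w_{i-1}$. On one hand $LRw_{i-1}=Lw_i = x_i(W)w_{i-1}$; on the other hand $RLw_{i-1}=R(x_{i-1}(W)w_{i-2}) = x_{i-1}(W)w_{i-1}$; and the right side acts on $w_{i-1}\in E^*_{r+i-1}W$ as $\frac{q^N q^{-r-i+1}-q^{r+i-1}}{q-1}w_{i-1}$. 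This gives the recursion $x_i(W)-x_{i-1}(W) = \frac{q^{N-r-i+1}-q^{r+i-1}}{q-1}$ with $x_0(W)=0$ (interpreting $x_0$ as the coefficient in $Lw_0=0$). Summing this telescoping recursion from $1$ to $i$ is a routine geometric-series computation that should produce exactly the stated product formula; I would verify the algebra by checking it satisfies both the recursion and the initial condition, and confirm that $x_i(W)\ne 0$ for $1\le i\le d$ using $0\le r\le N/2$ and $d=N-2r$ (which forces $i+2r-1\le N-1$, so $q^{N-r}>q^{i+r-1}$). This last nonvanishing is what makes $W$ genuinely irreducible of the claimed diameter, consistent with Lemma \ref{lem:Wshape}. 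One could alternatively cite \cite{uniform} directly, but giving the self-contained recursion argument via Lemma \ref{lem:AsRL} is cleaner and keeps the paper's exposition uniform.
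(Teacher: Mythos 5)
Your argument is correct, and it is a genuinely different route from the paper. The paper gives no proof of Lemma \ref{lem:Wrep} at all: the statement is simply attributed to \cite[Theorems 2.5 and 3.3(5)]{uniform}, just as Lemmas \ref{lem:Wshape} and \ref{lem:mult} are. What you propose is a self-contained derivation inside the present framework: take $w_0$ spanning $E^*_rW$, set $w_i=R^iw_0$, use thinness plus irreducibility (your minimal-submodule sketch is the right mechanism: if some $R^kw_0=0$ with $k\le d$ were zero, then ${\rm span}\{w_0,\dots,R^{k-1}w_0\}$ would be a proper nonzero $T$-submodule) to show $\{w_i\}_{i=0}^d$ is a basis adapted to $\{E^*_{r+i}W\}$, read off the $A^*$ and $R$ matrices, and then feed the commutator from Lemma \ref{lem:AsRL} into the basis to get the first-order recursion
$x_i(W)-x_{i-1}(W)=\dfrac{q^{N-r-i+1}-q^{r+i-1}}{q-1}$, $x_0(W)=0$.
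I verified that the telescoping sum is
$\dfrac{q^{N-r+1}-q^{N-r+1-i}-q^{r+i}+q^{r}}{(q-1)^2}$,
which indeed factors as $q^{1-i}\dfrac{q^i-1}{q-1}\cdot\dfrac{q^{N-r}-q^{i+r-1}}{q-1}$; moreover the recursion applied at $i=d+1$ (where $Rw_d=0$) gives $x_d(W)=\frac{q^{N-r}-q^r}{q-1}$, consistent with the closed form, so the boundary conditions are compatible with $d=N-2r$. Your route trades the external citation for a short internal computation built on Lemma \ref{lem:AsRL}, which is cleaner and more in the spirit of the surrounding text; the cost is that it still leans on the thinness and endpoint structure of Lemma \ref{lem:Wshape}, which itself is only cited from \cite{uniform}, so full self-containment would require reproving that as well.
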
 
\noindent The concept of $T$-module isomorphism is discussed in \cite[Section~1]{uniform}.

\begin{lemma} \label{lem:mult} {\rm (See \cite[Theorem~3.3(5)]{uniform}.)}  Let $W$ denote an irreducible $T$-submodule of $V$. Then $W$ is determined up to $T$-module isomorphism by its endpoint $r$.
For $0 \leq r \leq N/2$ let ${\rm mult}_r$ denote the multiplicity with which the irreducible $T$-submodule with endpoint $r$ appears in $V$. 
Then 
\begin{align}
{\rm mult}_r = \binom{N}{r}_q-\binom{N}{r-1}_q  \qquad  (1 \leq r \leq N/2), \qquad \quad {\rm mult}_0=1.  \label{eq:mform}
\end{align}
\end{lemma}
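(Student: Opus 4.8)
The plan is to compute the multiplicities $\mathrm{mult}_r$ by a dimension-counting argument, using the fact (Lemma~\ref{lem:Wshape}) that every irreducible $T$-submodule with endpoint $r$ has $E^*_r W \neq 0$ while $E^*_i W = 0$ for $i < r$. First I would observe that by Lemma~\ref{lem:ods} we may write $V = \bigoplus_W W$ as an orthogonal direct sum of irreducibles, and group the summands by endpoint: let $V_r$ denote the sum of those $W$ with endpoint $r$, so that $V = \bigoplus_{r=0}^{\lfloor N/2\rfloor} V_r$ and $\mathrm{mult}_r = \dim V_r / (d+1)$ where $d = N - 2r$. Since each such $W$ contributes exactly one dimension to $E^*_i V$ for each $i$ with $r \le i \le N-r$ and zero otherwise (by thinness and Lemma~\ref{lem:Wshape}), we get a linear system relating $\{\dim E^*_i V\}$ to $\{\mathrm{mult}_r\}$.

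The cleanest route is to focus on a single idempotent. For $0 \le i \le \lfloor N/2 \rfloor$, an irreducible $W$ with endpoint $r$ satisfies $E^*_i W \ne 0$ precisely when $r \le i$ (using $r \le N/2 \le N - i$, so the upper constraint $i \le N-r$ is automatic). Hence
\begin{align*}
\dim E^*_i V = \sum_{r=0}^{i} \mathrm{mult}_r.
\end{align*}
By \eqref{eq:ctwo}, $\dim E^*_i V = \binom{N}{i}_q$. Therefore $\mathrm{mult}_i = \binom{N}{i}_q - \binom{N}{i-1}_q$ for $1 \le i \le N/2$, and $\mathrm{mult}_0 = \binom{N}{0}_q = 1$, by taking successive differences. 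This gives \eqref{eq:mform} directly. One should also check consistency: that $\binom{N}{i}_q - \binom{N}{i-1}_q \ge 0$ for $i \le N/2$ (which holds since the $q$-binomial coefficients are unimodal), and that summing $(d+1)\,\mathrm{mult}_r$ over $r$ recovers $\dim V = \sum_i \binom{N}{i}_q$ — though neither check is logically needed for the stated formula.

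The remaining half of the statement — that $W$ is determined up to $T$-module isomorphism by its endpoint $r$ — follows immediately from Lemma~\ref{lem:Wrep}: the displayed matrices for $A^*, R, L$ with respect to the exhibited basis depend only on $r$ and $d = N-2r$, hence only on $r$, so any two irreducible $T$-submodules with the same endpoint afford isomorphic representations of the generating set $\{A^*, R, L\}$ of $T$, and the $T$-module isomorphism is the one sending one basis to the other.

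I do not anticipate a serious obstacle here; the only point requiring a little care is making sure the index range is handled correctly — specifically that for $r \le N/2$ the condition "$E^*_i W \ne 0$" is equivalent to "$r \le i \le N - r$" with the upper bound never binding when $i \le N/2$, so that the partial sums telescope against $\binom{N}{i}_q$ as written. Everything else is a direct appeal to the already-established structure theory (Lemmas \ref{lem:ods}, \ref{lem:Wshape}, \ref{lem:Wrep}) together with \eqref{eq:ctwo}.
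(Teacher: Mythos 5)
The paper offers no proof of this lemma; it simply cites \cite[Theorem~3.3(5)]{uniform}, so there is no internal argument to compare against. Your reconstruction from the structure lemmas already in the text is correct. The dimension count is sound: fix a decomposition of $V$ into irreducible $T$-submodules (Lemma~\ref{lem:ods}); by Lemma~\ref{lem:Wshape} a summand $W$ with endpoint $r$ is thin with $E^*_iW\ne 0$ exactly for $r\le i\le N-r$, and for $i\le \lfloor N/2\rfloor$ the upper constraint is automatic since $r\le N/2\le N-i$; hence
\begin{align*}
\dim E^*_iV=\sum_{r=0}^i\mathrm{mult}_r \qquad (0\le i\le \lfloor N/2\rfloor),
\end{align*}
which together with \eqref{eq:ctwo} and a first-difference gives \eqref{eq:mform}. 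It is worth noting that this counting identity is precisely the content of Lemma~\ref{lem:formula}, which the paper states as a consequence of Lemma~\ref{lem:mult}; you are running that implication in reverse, which is legitimate since the two are equivalent given Lemmas~\ref{lem:ods} and \ref{lem:Wshape}. Your treatment of the isomorphism claim is also correct: $A^*,R,L$ generate $T$ (because $A^*$ generates $M^*$, cf.\ \eqref{eq:Ais}), the representing matrices in Lemma~\ref{lem:Wrep} depend only on $r$, and the basis-to-basis map therefore intertwines all of $T$. One small stylistic point: the uniqueness-by-endpoint statement should logically precede the multiplicity formula, since the phrase ``the irreducible $T$-submodule with endpoint $r$'' presupposes it; but your counting only tallies summands by endpoint and never invokes uniqueness, so this is a matter of presentation rather than correctness.
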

\begin{lemma} \label{lem:formula} For $0 \leq i \leq N/2$ we have
\begin{align*}
{\rm mult}_0 + {\rm mult}_1 + \cdots + {\rm mult}_i = \binom{N}{i}_q.
\end{align*}
\end{lemma}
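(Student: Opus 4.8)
The plan is to prove this as a straightforward telescoping consequence of the multiplicity formula \eqref{eq:mform} in Lemma \ref{lem:mult}. The statement asserts that for $0 \le i \le N/2$,
\[
{\rm mult}_0 + {\rm mult}_1 + \cdots + {\rm mult}_i = \binom{N}{i}_q,
\]
and the right-hand side of \eqref{eq:mform} is visibly built so that consecutive terms telescope. First I would handle the base case $i=0$: here ${\rm mult}_0 = 1 = \binom{N}{0}_q$, so there is nothing to do. For the inductive step, assume the identity holds for some $i-1$ with $1 \le i \le N/2$; then by the induction hypothesis and \eqref{eq:mform},
\[
\sum_{r=0}^{i} {\rm mult}_r = \binom{N}{i-1}_q + \left( \binom{N}{i}_q - \binom{N}{i-1}_q \right) = \binom{N}{i}_q,
\]
which is exactly what we want. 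Alternatively, one can phrase this without induction: summing \eqref{eq:mform} over $1 \le r \le i$ gives a telescoping sum equal to $\binom{N}{i}_q - \binom{N}{0}_q$, and adding ${\rm mult}_0 = 1 = \binom{N}{0}_q$ recovers the claim.

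The one point that requires a word of care is the boundary behavior of the formula \eqref{eq:mform}: it is stated with the case distinction ${\rm mult}_r = \binom{N}{r}_q - \binom{N}{r-1}_q$ for $1 \le r \le N/2$ and ${\rm mult}_0 = 1$ separately. Since my sum in the inductive step only ever applies \eqref{eq:mform} at indices $r$ with $1 \le r \le i \le N/2$, the formula is being used exactly in its stated range, so there is no issue. I should also note that for the telescoping identity $\sum_{r=1}^i \left( \binom{N}{r}_q - \binom{N}{r-1}_q \right) = \binom{N}{i}_q - 1$ I am implicitly using $\binom{N}{0}_q = 1$, which is immediate from the definition of the $q$-binomial coefficient together with $[0]^!_q = 1$.

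There is really no main obstacle here — the result is essentially a restatement of Lemma \ref{lem:mult} after collapsing a telescoping sum, and the only thing to be careful about is citing \eqref{eq:mform} within its valid index range. I would write the proof in two lines: invoke Lemma \ref{lem:mult}, then display the telescoping computation.

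\begin{proof}
We use Lemma \ref{lem:mult}. The case $i=0$ is immediate since ${\rm mult}_0 = 1 = \binom{N}{0}_q$. For $1 \le i \le N/2$, the formula \eqref{eq:mform} gives
\begin{align*}
\sum_{r=0}^{i} {\rm mult}_r
  &= {\rm mult}_0 + \sum_{r=1}^{i} \left( \binom{N}{r}_q - \binom{N}{r-1}_q \right) \\
  &= 1 + \binom{N}{i}_q - \binom{N}{0}_q \\
  &= \binom{N}{i}_q,
\end{align*}
where in the last step we used $\binom{N}{0}_q = 1$.
\end{proof}
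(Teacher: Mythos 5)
Your proof is correct and takes the same route as the paper, which simply says the result is immediate from \eqref{eq:mform}; you have merely written out the telescoping sum that the paper leaves implicit. The boundary care about the index range of \eqref{eq:mform} is handled appropriately.
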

\begin{proof} Immediate from \eqref{eq:mform}.
\end{proof}


\section{The eigenvalues of $A$}

\noindent We continue to discuss the poset $L_N(q)$. In this section, we examine the matrix $A$ from Definition \ref{def:weightedA}.
We show that $A$ is diagonalizable, and we compute its eigenvalues. For each eigenvalue of $A$, we give
the dimension of the corresponding eigenspace.
\medskip

\noindent The following result is a variation on \cite[Section~1]{murali}.

\begin{lemma} \label{thm:Adiag} The matrix $A$ is diagonalizable. 
\end{lemma}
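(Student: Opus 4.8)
The plan is to show that $A$ acts on the standard module $V$ in a way that respects the decomposition of $V$ into irreducible $T$-submodules (Lemma~\ref{lem:ods}), and then to diagonalize $A$ on each such submodule separately. Since $A\in M\subseteq T$, each irreducible $T$-submodule $W$ is $A$-invariant, so it suffices to prove that the restriction $A|_W$ is diagonalizable for every irreducible $T$-submodule $W$ of $V$; the full matrix $A$ is then diagonalizable because $V$ is a direct sum of such $W$'s.

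First I would fix an irreducible $T$-submodule $W$ with endpoint $r$ and diameter $d=N-2r$ (Lemma~\ref{lem:Wshape}), and use the explicit matrix representation of $A^*,R,L$ on $W$ from Lemma~\ref{lem:Wrep}. Combining this with $A = R + \varphi(A^*)^{-1}L + \frac{\varphi-1}{q-1}(A^*)^{-1}$ (Lemma~\ref{lem:Ashape}) gives an explicit $(d+1)\times(d+1)$ tridiagonal matrix representing $A|_W$: the subdiagonal entries are all $1$ (from $R$), the superdiagonal entries are $\varphi q^{r+i}x_i(W)$ for $1\le i\le d$ (from $\varphi(A^*)^{-1}L$ using the eigenvalue $q^{-(r+i-1)}$ of $A^*$ on the appropriate coordinate), and the diagonal entries are $\frac{\varphi-1}{q-1}q^{r+i}$ for $0\le i\le d$. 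The key point is that this tridiagonal matrix has all superdiagonal entries nonzero (since $x_i(W)\neq 0$ and $\varphi>0$, $q>1$) and all subdiagonal entries nonzero; such an irreducible tridiagonal matrix is nonderogatory — its minimal polynomial equals its characteristic polynomial, so it is diagonalizable if and only if it has $d+1$ distinct eigenvalues.

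So the heart of the argument is to identify the $d+1$ eigenvalues of $A|_W$ and check they are distinct. I expect them to be among the scalars $\theta_i = \frac{\varphi q^{N-i}-q^i}{q-1}$ for $0\le i\le N$ advertised in the introduction, specifically $\theta_r,\theta_{r+1},\dots,\theta_{r+d}=\theta_{N-r}$. A clean way to produce these eigenvalues is to exhibit eigenvectors directly in the $T$-module representation of Lemma~\ref{lem:Wrep}: using the $U_{q^{1/2}}(\mathfrak{sl}_2)$-type relations among $A^+,A^-,A^*$ from Lemma~\ref{lem:UqRel} together with $A=\varphi A^- - A^+$, one can build a ladder of eigenvectors for $A|_W$ by applying suitable raising/lowering combinations to a highest- or lowest-weight vector, much as one diagonalizes a tridiagonal matrix in a Leonard-system setting. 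Distinctness of $\theta_r,\dots,\theta_{N-r}$ follows because $i\mapsto \theta_i$ is injective on $\{0,1,\dots,N\}$: if $\theta_i=\theta_j$ with $i<j$ then $\varphi(q^{N-i}-q^{N-j}) = q^i-q^j<0$, contradicting $\varphi>0$ and $i<j$.

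The main obstacle I anticipate is the bookkeeping needed to actually pin down the eigenvalues of $A|_W$ — either verifying that the explicit tridiagonal matrix has characteristic polynomial $\prod_{i=r}^{N-r}(t-\theta_i)$, or constructing the eigenvectors through the quantum-group relations without circularity (since the $Q$-polynomial interpretation and the Leonard-system description come later in the paper, the present lemma should rely only on Lemmas~\ref{lem:Wrep}, \ref{lem:Ashape}, \ref{lem:UqRel}). Given the structure of the paper, I would expect the authors to take the shortest route: cite the irreducibility of the tridiagonal representation to get that $A|_W$ is nonderogatory, and then either quote the companion computation from \cite{murali} for $\varphi=1$ suitably modified, or postpone the eigenvalue computation and here prove only diagonalizability via the nonderogatory-plus-distinct-spectrum argument applied to the explicit $3$-diagonal matrix. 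Everything else is routine linear algebra once the representation in Lemma~\ref{lem:Wrep} is in hand.
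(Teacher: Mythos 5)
Your approach is genuinely different from the paper's, and as written it has a gap. The paper's proof is a one-line global symmetrization: it defines a diagonal matrix $D$ with $(y,y)$-entry $s_{\dim y}$, where $s_0=1$ and $s_{i+1}=s_i\,q^{i/2}\varphi^{1/2}$, checks directly from Lemma~\ref{lem:Ashape} and $R^t=L$ that $A^t=D^2AD^{-2}$, concludes that $DAD^{-1}$ is a real symmetric matrix, and hence that $A$ is diagonalizable. No $T$-module decomposition, no tridiagonal representation, and crucially \emph{no} knowledge of the eigenvalues is needed; this is why the lemma can be placed before the eigenvalue computation of Propositions~\ref{thm:Wrep}--\ref{thm:Aspec}.

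The gap in your proposal is the circularity you yourself flag: the ``nonderogatory plus distinct spectrum'' route requires you to already know that $A|_W$ has $d+1$ distinct eigenvalues, and you never actually establish this. You write ``I expect them to be'' $\theta_r,\dots,\theta_{N-r}$ and sketch a ladder-operator construction, but this is precisely the content of Propositions~\ref{thm:Wrep} and~\ref{prop:Weigval}, which the paper derives \emph{after} diagonalizability, using the Leonard-system classification from \cite[Example~20.7]{LSnotes}. Your distinctness argument for the scalars $\theta_i$ is fine, but it is vacuous until you show those scalars are in fact the eigenvalues of $A|_W$, and that step is the hard one. A clean way to close your argument without computing eigenvalues is to observe that the tridiagonal matrix $\mathcal B$ representing $A|_W$ has subdiagonal entries $1$ and superdiagonal entries $\xi_i(W)=\varphi\,q^{r+i-1}x_i(W)>0$, so the products of opposite off-diagonal pairs are positive; such a tridiagonal matrix is diagonally similar to a real symmetric tridiagonal matrix and therefore diagonalizable. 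That is the block-by-block version of exactly what the paper does globally, and it makes the $T$-module decomposition unnecessary overhead. As it stands, your proposal either needs that symmetrization step or needs to import the eigenvalue computation from later in the paper, which would invert the logical order.
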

\begin{proof} Define a sequence of scalars $\lbrace s_i \rbrace_{i=0}^N$ such that $s_0=1$ and
\begin{align*}
s_{i+1} = s_i q^{\frac{i}{2}} \varphi^{\frac{1}{2}} \qquad \qquad (0 \leq i \leq N-1).
\end{align*}
For $0 \leq i \leq N$ we have $0 \not=s_i \in \mathbb R$, because  $q$ and $\varphi $ are positive real numbers.
Define a diagonal matrix $D \in {\rm Mat}_X(\mathbb C)$ with $(y,y)$-entry
\begin{align*}
D_{y,y} = s_{{\rm dim}\,y}   \qquad \qquad (y \in X).
\end{align*}
\noindent The matrix  $D$ is invertible. Using Lemma  \ref{lem:Ashape} and $R^t=L$ we obtain $A^t = D^2 A D^{-2}$. Therefore $DAD^{-1}$ is symmetric.
The matrix $DAD^{-1}$ is symmetric with all entries real, so $DAD^{-1}$ is diagonalizable by linear algebra. It follows that $A$ is diagonalizable.
\end{proof}

\begin{definition} \label{def:theta} \rm Define the scalars
\begin{align*}
\theta_i = \frac{\varphi q^{N-i}-q^i}{q-1}  \qquad \qquad (0 \leq i \leq N).
\end{align*}
\end{definition}

\begin{lemma} \label{lem:thetadiff} For $0 \leq i,j\leq N$ we have
\begin{align*}
\theta_i-\theta_j = \Bigl( 1+\varphi q^{N-i-j}\Bigr) \frac{ q^j-q^i}{q-1}.
\end{align*}
\end{lemma}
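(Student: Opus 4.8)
\textbf{Proof proposal for Lemma~\ref{lem:thetadiff}.}
The plan is simply to compute $\theta_i-\theta_j$ directly from Definition~\ref{def:theta} and then factor the result. First I would write
\begin{align*}
\theta_i-\theta_j = \frac{\varphi q^{N-i}-q^i}{q-1} - \frac{\varphi q^{N-j}-q^j}{q-1}
= \frac{\varphi q^{N-i}-\varphi q^{N-j} - q^i + q^j}{q-1}.
\end{align*}
The key observation is that the numerator splits into two pieces, one multiplied by $\varphi$ and one not, and both pieces share the factor $q^j-q^i$. Indeed $q^j-q^i$ appears directly in $-q^i+q^j$, and for the other piece we have $\varphi q^{N-i}-\varphi q^{N-j} = \varphi q^{N-i-j}(q^j-q^i)$, since $q^{N-i} = q^{N-i-j}\cdot q^j$ and $q^{N-j}=q^{N-i-j}\cdot q^i$.

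Carrying this out, the numerator becomes
\begin{align*}
\varphi q^{N-i-j}(q^j-q^i) + (q^j-q^i) = \bigl(1+\varphi q^{N-i-j}\bigr)(q^j-q^i),
\end{align*}
and dividing by $q-1$ gives exactly the claimed identity
\begin{align*}
\theta_i-\theta_j = \bigl(1+\varphi q^{N-i-j}\bigr)\,\frac{q^j-q^i}{q-1}.
\end{align*}

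There is essentially no obstacle here: the entire argument is the elementary factoring step $\varphi q^{N-i}-\varphi q^{N-j}=\varphi q^{N-i-j}(q^j-q^i)$ together with collecting the common factor $q^j-q^i$. The only point requiring a moment's care is bookkeeping of signs — making sure the common factor is written as $q^j-q^i$ (matching the statement) rather than $q^i-q^j$ — but this is immediate once the numerator is expanded.
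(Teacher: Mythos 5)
Your proposal is correct and matches the paper's approach exactly: the paper's entire proof is ``Use Definition~\ref{def:theta},'' i.e., direct substitution and factoring, which is precisely what you carry out.
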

\begin{proof}  Use Definition \ref{def:theta}.
\end{proof}

\begin{lemma} The scalars  $\lbrace \theta_i \rbrace _{i=0}^N$ are mutually distinct.
\end{lemma}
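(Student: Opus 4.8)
The plan is to use Lemma~\ref{lem:thetadiff}, which expresses the difference $\theta_i-\theta_j$ as a product of two factors. It suffices to show that for $0\le i,j\le N$ with $i\ne j$, neither factor vanishes. First I would observe that $q$ is a prime power, so $q\ge 2$, and $\varphi>0$; hence $q^j-q^i\ne 0$ whenever $i\ne j$, since $t\mapsto q^t$ is strictly increasing. Second, the factor $1+\varphi q^{N-i-j}$ is a sum of the positive number $1$ and the positive number $\varphi q^{N-i-j}$ (a product of positive reals), so it is strictly positive, in particular nonzero.

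Combining these two observations with Lemma~\ref{lem:thetadiff}, the product $\bigl(1+\varphi q^{N-i-j}\bigr)\frac{q^j-q^i}{q-1}$ is nonzero whenever $i\ne j$, since $q-1\ne 0$ as well. Therefore $\theta_i\ne\theta_j$ for $i\ne j$, which is the claim.

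There is no real obstacle here; the only thing to be careful about is invoking positivity of $\varphi$ (from Definition~\ref{def:weightedA}) and of $q$ (a field cardinality, hence at least $2$) so that both factors are genuinely nonzero. One could alternatively argue directly from Definition~\ref{def:theta} by supposing $\theta_i=\theta_j$ and deriving $\varphi q^{N-i}-q^i=\varphi q^{N-j}-q^j$, i.e. $\varphi(q^{N-i}-q^{N-j})=q^i-q^j$, and noting the two sides have opposite signs when $i\ne j$ (the left side has the sign of $j-i$ after factoring, the right side the sign of $i-j$) unless both vanish, forcing $i=j$; but routing through Lemma~\ref{lem:thetadiff} is cleaner.
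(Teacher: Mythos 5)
Your proof is correct and takes the same route as the paper, whose entire proof reads ``By Lemma~\ref{lem:thetadiff}''; you have simply spelled out why each factor in that lemma's factorization of $\theta_i-\theta_j$ is nonzero (positivity of $\varphi$ and $q$, plus $q\ne 1$).
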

\begin{proof} By Lemma \ref{lem:thetadiff}.
\end{proof}

\noindent Our next  goal is twofold. We will show that $\lbrace \theta_i \rbrace _{i=0}^N$ are the eigenvalues of $A$. We will also show that for $0 \leq i \leq N$ the $\theta_i$-eigenspace of $A$ has dimension $\binom{N}{i}_q$.
\medskip

\begin{proposition} \label{thm:Wrep} Let $W$ denote an irreducible $T$-submodule of $V$, and let $r$ denote the endpoint of $W$. With respect to the
basis for $W$ from Lemma \ref{lem:Wrep}, the matrix representing $A$ is
\begin{align*}
&A: \quad \begin{pmatrix} a_0(W) & \xi_1(W) & &&&{\bf 0}  \\
                                             1 & a_1(W) & \xi_2(W) & &&\\
                                              & 1 & \cdot & \cdot && \\
                                              && \cdot &\cdot &\cdot &\\
                                              &&&\cdot &\cdot &\xi_d(W)\\
                                              {\bf 0} &&&&1&a_d(W)
                                             \end{pmatrix},                                                                               
\end{align*}
\noindent where
\begin{align*}
a_i(W) &= \frac{\varphi -1}{q-1} q^{i+r}  \qquad \qquad (0 \leq i \leq d), \\
\xi_i(W) &= \varphi q^r \frac{q^i-1}{q-1} \, \frac{q^{N-r}-q^{i+r-1}}{q-1} \qquad \quad (1 \leq i \leq d).
\end{align*}
\end{proposition}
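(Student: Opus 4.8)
The plan is to compute the matrix representing $A$ on the basis for $W$ from Lemma \ref{lem:Wrep} by simply substituting the known matrix representations of $R$, $L$, and $A^*$ into the formula $A = R + \varphi (A^*)^{-1} L + \frac{\varphi-1}{q-1}(A^*)^{-1}$ from Lemma \ref{lem:Ashape}. All three ingredient matrices are explicitly given in Lemma \ref{lem:Wrep} relative to this basis, so the argument is entirely a matter of adding three explicit matrices and identifying the entries.

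First I would record that on the chosen basis $(A^*)^{-1}$ is the diagonal matrix ${\rm diag}(q^{r}, q^{r+1}, \ldots, q^{r+d})$, being the inverse of the diagonal matrix ${\rm diag}(q^{-r}, \ldots, q^{-r-d})$ representing $A^*$. Then the subdiagonal of $A$ comes entirely from $R$, whose subdiagonal entries are all $1$; the matrices $\varphi(A^*)^{-1}L$ and $\frac{\varphi-1}{q-1}(A^*)^{-1}$ contribute nothing below the diagonal. This yields the $1$'s on the subdiagonal of the displayed matrix. Next, the diagonal of $A$ comes entirely from $\frac{\varphi-1}{q-1}(A^*)^{-1}$, since $R$ and $L$ (hence $(A^*)^{-1}L$) are strictly triangular; the $(i,i)$-entry is $\frac{\varphi-1}{q-1}\cdot q^{r+i}$, which is $a_i(W)$. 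Finally, the superdiagonal of $A$ comes entirely from $\varphi(A^*)^{-1}L$, since $R$ is strictly lower triangular and the diagonal term is diagonal. Here the $i$-th superdiagonal entry (in position $(i-1,i)$, counting rows/columns from $0$ to $d$) of $(A^*)^{-1}L$ is the product of the $(i-1)$-st diagonal entry $q^{r+i-1}$ of $(A^*)^{-1}$ with the entry $x_i(W) = q^{1-i}\frac{q^i-1}{q-1}\cdot\frac{q^{N-r}-q^{i+r-1}}{q-1}$ of $L$; multiplying by $\varphi$ gives $\varphi q^{r+i-1}q^{1-i}\frac{q^i-1}{q-1}\cdot\frac{q^{N-r}-q^{i+r-1}}{q-1} = \varphi q^r\frac{q^i-1}{q-1}\cdot\frac{q^{N-r}-q^{i+r-1}}{q-1}$, which is exactly $\xi_i(W)$.

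The only point requiring care is the bookkeeping of indices: the basis is indexed $0$ through $d$, the superdiagonal entries $\xi_i(W)$ are indexed $1$ through $d$ and sit in matrix positions $(i-1,i)$, while the factor of $(A^*)^{-1}$ that multiplies the $L$-entry in that position is the $(i-1,i-1)$-diagonal entry $q^{r+i-1}$, not $q^{r+i}$. Getting this alignment right is what makes the power of $q$ collapse correctly; apart from that, the verification is a routine entrywise matching, and there is no genuine obstacle. I would close by remarking that, as a sanity check, one may confirm that the trace and the block-tridiagonal shape are consistent with the earlier lemmas, but the direct substitution already gives the result.
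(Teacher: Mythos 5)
Your proof is correct and follows essentially the same route as the paper: both substitute the matrix representations from Lemma \ref{lem:Wrep} into the formula $A = R + \varphi (A^*)^{-1} L + \frac{\varphi-1}{q-1}(A^*)^{-1}$ from Lemma \ref{lem:Ashape} and match entries. The key index-bookkeeping step you emphasize is precisely the observation $\xi_i(W) = \varphi q^{r+i-1} x_i(W)$ that the paper records as its whole proof.
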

\begin{proof} By Lemmas \ref{lem:Ashape}, \ref{lem:Wrep} and the observation
\begin{align*}
\xi_i(W)= \varphi q^{r+i-1} x_i(W) \qquad \qquad (1 \leq i \leq d).
\end{align*}
\end{proof}

\noindent Let $W$  denote a finite-dimensional vector space. A linear map $B:W\to W$ is called 
 {\it multiplicity-free} whenever $W$ is spanned by the eigenspaces of $B$, and each of these eigenspaces has dimension one.

\begin{proposition} \label{prop:Weigval} Let $W$ denote an irreducible $T$-submodule of $V$, and let $r$ denote the endpoint of $W$.
Then the action of $A$ on $W$ is multiplicity-free, with eigenvalues $\lbrace \theta_{i} \rbrace_{i=r}^{N-r}$. 
\end{proposition}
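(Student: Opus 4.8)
The plan is to work entirely with the tridiagonal matrix representing $A$ on $W$ given in Proposition \ref{thm:Wrep}, and to produce an explicit triangular eigenbasis. Fix an irreducible $T$-submodule $W$ with endpoint $r$ and diameter $d=N-2r$, and let $\{w_j\}_{j=0}^d$ be the basis of Lemma \ref{lem:Wrep}, so that $R$ is subdiagonal with $1$'s, $A^*$ is diagonal with entries $q^{-r-j}$, and $A=a_j(W)$ on the diagonal, $\xi_j(W)$ on the superdiagonal, $1$ on the subdiagonal. Since all the superdiagonal entries $\xi_j(W)$ are nonzero (each factor $q^i-1$ and $q^{N-r}-q^{i+r-1}$ is nonzero because $1\le i\le d$ forces $i+r-1\le N-r-1<N-r$), the matrix is an irreducible tridiagonal matrix, and such a matrix is automatically multiplicity-free: for any scalar $\lambda$ the eigenspace of $A$ for $\lambda$ is at most one-dimensional, because the first coordinate of an eigenvector determines all others recursively via the subdiagonal $1$'s. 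Hence it suffices to exhibit $d+1$ distinct eigenvalues, and by the count of eigenvalues this forces each eigenspace to be exactly one-dimensional and $A$ to be diagonalizable on $W$ (which we also know abstractly from Lemma \ref{thm:Adiag}).

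The key step is to guess and verify the eigenvectors. Since $A=\varphi A^- - A^+ = R+\varphi(A^*)^{-1}L+\tfrac{\varphi-1}{q-1}(A^*)^{-1}$, I would look for eigenvectors of the form $v_i=\sum_{j=0}^{d} c_{ij}\,w_j$ and set up the three-term recurrence $c_{i,j-1}+a_j(W)c_{ij}+\xi_{j+1}(W)c_{i,j+1}=\theta_i c_{ij}$ coming from $Av_i=\theta_i v_i$; equivalently, in the language of Lemmas \ref{lem:RLact}--\ref{lem:RLE}, I would hunt for an eigenvector inside the subalgebra generated by $R$ acting on a suitable lowest-weight-type vector. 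A cleaner route uses the equitable $U_{q^{1/2}}(\mathfrak{sl}_2)$ structure of Lemma \ref{lem:UqRel}: the relations \eqref{eq:UqOne}, \eqref{eq:UqTwo} say that on $W$ (a thin irreducible module, hence essentially a highest-weight module for this algebra) the operators $A^+,A^-$ are "raising/lowering" relative to the diagonalizable $A^*$, and $A$ is their linear combination $\varphi A^- - A^+$. The Leonard-system machinery of dual $q$-Krawtchouk type (promised in Section 14, but one can derive what is needed directly) then gives the eigenvalues of $A$ on $W$ as a $q$-quadratic sequence; matching the $q$-Krawtchouk recurrence coefficients against $a_j(W),\xi_j(W)$ pins the eigenvalues down to be exactly $\theta_{r},\theta_{r+1},\ldots,\theta_{N-r}$ from Definition \ref{def:theta}.

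Concretely, the verification I expect to carry out is: show that $\det(A-\lambda I)$ on $W$ equals $\prod_{i=r}^{N-r}(\lambda-\theta_i)$. I would do this by proving that each $\theta_i$ with $r\le i\le N-r$ is an eigenvalue, via constructing an explicit eigenvector. A natural ansatz, motivated by the $\varphi=1$ case in \cite{Lnq} and by the block-tridiagonal action of $A^*$ on the eigenspaces of $A$, is that the $\theta_i$-eigenvector in $W$ is a $q$-binomial-type combination $\sum_{j} \binom{?}{j}_q (\text{power of }\varphi)(\text{power of }q)\, w_j$; plugging this into the recurrence and using the $q$-Pascal identity reduces the eigenvalue equation to the single scalar identity $\theta_i-\theta_j=(1+\varphi q^{N-i-j})\tfrac{q^j-q^i}{q-1}$ of Lemma \ref{lem:thetadiff}, which is exactly what makes the telescoping work. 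Finally, the $\theta_i$ for $r\le i\le N-r$ are mutually distinct by Lemma \ref{lem:thetadiff}, there are $d+1=N-2r+1$ of them, and $\dim W=d+1$, so these are all the eigenvalues and each eigenspace is one-dimensional; hence $A$ acts on $W$ multiplicity-free with eigenvalues $\{\theta_i\}_{i=r}^{N-r}$.

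\textbf{Main obstacle.} The one genuinely nontrivial point is finding the correct closed form for the eigenvectors (the right normalization of the $q$-binomial coefficients together with the $\varphi$- and $q$-powers) so that the three-term recurrence collapses cleanly; once the ansatz is right, the verification is the single identity of Lemma \ref{lem:thetadiff} and routine $q$-Pascal bookkeeping. An alternative that sidesteps guessing the eigenvectors is to quote that $A,A^*$ on $W$ form a Leonard pair — this follows from the tridiagonal relations \eqref{eq:Serre}, \eqref{eq:TDrel} together with thinness and irreducibility — and then identify its type and parameter array by comparing the data $a_j(W),\xi_j(W),\{q^{-r-j}\}$ with the classification; the dual $q$-Krawtchouk parameter array then hands back $\{\theta_i\}_{i=r}^{N-r}$ directly. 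Either way the multiplicity-freeness is immediate from irreducibility of the tridiagonal matrix.
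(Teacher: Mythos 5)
Your framing is correct as far as it goes: the representing matrix of $A$ on $W$ from Proposition~\ref{thm:Wrep} has nonzero super- and sub-diagonal entries, so every eigenspace is at most one-dimensional, and since $\dim W = d+1$ and the scalars $\theta_r, \dots, \theta_{N-r}$ are $d+1$ mutually distinct values by Lemma~\ref{lem:thetadiff}, the proposition reduces to showing each such $\theta_i$ actually occurs as an eigenvalue. But that is exactly the step you leave unfinished. The $q$-binomial ansatz with unspecified $\varphi$- and $q$-powers, and the telescoping via the $q$-Pascal identity and Lemma~\ref{lem:thetadiff}, are described only in the conditional (``I would look for\ldots'', ``I expect to carry out\ldots''), and you yourself name finding the correct normalization as the main obstacle. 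That normalization is the entire mathematical content of the proposition, so as written this is a plan, not a proof. Your alternative route also has a circularity risk: to invoke that $A, A^*$ form a Leonard pair on $W$ you already need multiplicity-freeness of $A$ on $W$, which is what is being proved; recovering it from the relations \eqref{eq:Serre}, \eqref{eq:TDrel} together with thinness and irreducibility would require the tridiagonal-pair machinery, which the paper does not use.

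The paper's actual proof sidesteps both issues with far lighter input. It writes down an explicit tridiagonal matrix $B$ with entries $c_i, a_i, b_i$, observes that conjugating the representing matrix $\mathcal{B}$ of $A$ by the invertible diagonal matrix ${\rm diag}(\eta_0,\ldots,\eta_d)$ with $\eta_i = c_1 c_2 \cdots c_i$ turns $\mathcal{B}$ into $B$ (so the two have the same characteristic polynomial), then recognizes $B$ as the matrix attached to a dual $q$-Krawtchouk Leonard system $\Phi$ in \cite[Example~20.7]{LSnotes} with parameters $h(\Phi), s(\Phi), \theta_0(\Phi), h^*(\Phi), \theta^*_0(\Phi)$, and reads off from that reference that $B$ is multiplicity-free with eigenvalues $\theta_i(\Phi) = \theta_0(\Phi) + h(\Phi)(1-q^i)(1-s(\Phi)q^{i+1})q^{-i} = \theta_{i+r}$ for $0 \le i \le d$. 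No eigenvector is constructed and no classification theorem is quoted; the only inputs are the known closed-form spectrum of one parameterized tridiagonal matrix and the elementary fact that similar matrices share a characteristic polynomial. If you want to complete your own route, the missing ingredient is precisely the explicit eigenvector formula (or equivalently the diagonal conjugation to $B$), and that computation needs to be carried out rather than sketched.
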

\begin{proof} Recall the diameter $d=N-2r$. Define a $(d+1) \times (d+1)$ matrix
\begin{align*}
&B= \begin{pmatrix} a_0 & b_0 & &&&{\bf 0}  \\
                                             c_1 & a_1 & b_1 & &&\\
                                              & c_2 & \cdot & \cdot && \\
                                              && \cdot &\cdot &\cdot &\\
                                              &&&\cdot &\cdot &b_{d-1}\\
                                              {\bf 0} &&&&c_d &a_d
                                             \end{pmatrix},                                                                               
\end{align*}
where
\begin{align*}
 c_i & = q^r \frac{q^i-1}{q-1}  \qquad \qquad (1 \leq i \leq d), \\
 a_i &= \frac{\varphi-1}{q-1}q^{i+r}   \qquad \quad (0 \leq i \leq d), \\
 b_i &= \varphi \frac{q^{N-r}-q^{i+r}}{q-1}\qquad \quad (0 \leq i \leq d-1).
\end{align*}
The matrix $B$ comes up in the description of a Leonard system $\Phi$ of dual $q$-Krawtchouk type \cite[Example~20.7]{LSnotes}, where the parameters
 $h(\Phi), s(\Phi), \theta_0(\Phi), h^*(\Phi), \theta^*_0(\Phi)$ from  \cite[Example~20.7]{LSnotes}
are given by
\begin{align*}
& h(\Phi) = \frac{\varphi q^{N-r}}{q-1}, \qquad \qquad s(\Phi) = - \varphi^{-1} q^{2r-N-1}, \\
& \theta_0(\Phi) = \frac{\varphi q^{N-r}-q^r}{q-1},  \qquad \quad h^*(\Phi)=q^{-r}, \qquad \quad \theta^*_0(\Phi) = q^{-r}.
\end{align*}
According to  \cite[Example~20.7]{LSnotes}, the matrix $B$ is multiplicity-free with eigenvalues
\begin{align*}
\theta_i(\Phi)= \theta_0(\Phi) + h(\Phi)(1-q^i)(1-s(\Phi)q^{i+1})q^{-i} = \theta_{i+r} \qquad \quad (0 \leq i \leq d).
\end{align*}
We have shown that $B$ is multiplicity-free with eigenvalues $\lbrace \theta_i \rbrace_{i=r}^{N-r}$.
Let $\mathcal B$ denote the matrix that represents $A$ in Proposition \ref{thm:Wrep}. The matrices $\mathcal B, B$ are related as follows.
Note that $a_i(W)=a_i$  $(0 \leq i \leq d)$ and $\xi_i(W)=c_i b_{i-1}$ $(1 \leq i \leq d)$. For $0 \leq i \leq d$, define $\eta_i = c_1 c_2 \cdots c_i$ and note that $\eta_i \not=0$.
 By matrix multiplication,
\begin{align*}
B \, {\rm diag}\bigl(\eta_0, \eta_1, \ldots, \eta_d \bigr) = 
{\rm diag} \bigl(\eta_0, \eta_1, \ldots, \eta_d \bigr) {\mathcal B}.
\end{align*}
Therefore, the matrices $\mathcal B, B$ have the same characteristic polynomial. By these comments, $\mathcal B$ is multiplicity-free with eigenvalues $\lbrace \theta_{i} \rbrace_{i=r}^{N-r}$. The result follows.
\end{proof}

\begin{proposition} \label{thm:Aspec} The eigenvalues of $A$ are $\lbrace \theta_i \rbrace_{i=0}^N$.
For $0 \leq i \leq N$ the $\theta_i$-eigenspace of $A$ has dimension $\binom{N}{i}_q$.
\end{proposition}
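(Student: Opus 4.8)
The plan is to deduce this global statement about $A$ on all of $V$ from the local information about the irreducible $T$-modules, using the decomposition of $V$ into such modules from Lemma \ref{lem:ods}. First I would invoke Lemma \ref{lem:ods} to write $V$ as an orthogonal direct sum of irreducible $T$-submodules. By Proposition \ref{prop:Weigval}, each such submodule $W$ with endpoint $r$ contributes, under the action of $A$, the eigenvalues $\lbrace \theta_i \rbrace_{i=r}^{N-r}$, each with multiplicity one on $W$. Since these are eigenvalues of $A$ restricted to a subspace, and since $A$ is diagonalizable on all of $V$ by Lemma \ref{thm:Adiag}, the set of eigenvalues of $A$ is exactly the union over all submodules $W$ in the decomposition of the eigenvalue sets $\lbrace \theta_i \rbrace_{i=r}^{N-r}$. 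Because $\theta_0$ occurs already for the endpoint-$0$ module (which exists since ${\rm mult}_0 = 1$), and in general $\theta_i$ occurs for any module of endpoint $r \leq \min(i, N-i)$, the union is precisely $\lbrace \theta_i \rbrace_{i=0}^N$; here I would also note the $\theta_i$ are mutually distinct by the lemma preceding Definition \ref{def:theta}.

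Next, to get the eigenspace dimensions, I would count the multiplicity of $\theta_i$ across the whole decomposition. Fix $i$ with $0 \leq i \leq N$, and put $m = \min(i, N-i)$. An irreducible $T$-submodule with endpoint $r$ contributes exactly one to the $\theta_i$-eigenspace of $A$ precisely when $r \leq i \leq N-r$, i.e. when $r \leq m$. By Lemma \ref{lem:mult}, the endpoint-$r$ module appears with multiplicity ${\rm mult}_r$ in $V$. Hence the dimension of the $\theta_i$-eigenspace of $A$ equals $\sum_{r=0}^{m} {\rm mult}_r$. By Lemma \ref{lem:formula}, this sum equals $\binom{N}{m}_q$, and since $\binom{N}{m}_q = \binom{N}{i}_q$ for $m = \min(i,N-i)$, we obtain the claimed dimension $\binom{N}{i}_q$.

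Finally, I would add a consistency remark that the dimensions add up: $\sum_{i=0}^N \binom{N}{i}_q = \lvert X \rvert = \dim V$, which is forced anyway since $A$ is diagonalizable, but it serves as a sanity check and confirms we have accounted for every eigenvector.

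The main obstacle, such as it is, is purely bookkeeping: one must be careful with the index ranges, specifically the observation that an endpoint-$r$ module supplies the eigenvalue $\theta_i$ if and only if $r \leq i \leq N-r$, equivalently $r \leq \min(i,N-i)$, and that this is compatible with the constraint $0 \leq r \leq N/2$ from Lemma \ref{lem:Wshape}. No genuinely hard step is involved; the real content has already been packaged into Proposition \ref{prop:Weigval} and Lemmas \ref{lem:mult} and \ref{lem:formula}.
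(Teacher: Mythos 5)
Your proof is correct and follows essentially the same route as the paper: decompose $V$ into irreducible $T$-submodules via Lemma \ref{lem:ods}, apply Proposition \ref{prop:Weigval} to each summand, and tally multiplicities using Lemmas \ref{lem:mult} and \ref{lem:formula}. You simply spell out the bookkeeping (the reduction to $m=\min(i,N-i)$ and the symmetry $\binom{N}{m}_q=\binom{N}{i}_q$) that the paper compresses into the phrase ``the result follows in view of Lemma \ref{lem:formula}.''
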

\begin{proof} By Lemma \ref{lem:ods}, the $T$-module $V$ is an orthogonal direct sum of irreducible $T$-submodules:
\begin{align}
\label{eq:VW}
V = \sum_{W} W.
\end{align}
For each summand $W$ in \eqref{eq:VW}, the endpoint of $W$ is nonnegative and at most $N/2$.
 For $0 \leq r \leq N/2$, the number of summands $W$ in \eqref{eq:VW} that have endpoint $r$
 is equal to  ${\rm mult}_r $. For each such $W$ the action of $A$ on $W$ is multiplicity-free, with
eigenvalues $\lbrace \theta_i \rbrace_{i=r}^{N-r}$.
The result follows in view of  Lemma \ref{lem:formula}.
\end{proof}

\section{The matrices $\lbrace E_i \rbrace_{i=0}^N$}

\noindent We continue to discuss the poset $L_N(q)$. In this section, we examine the primitive idempotents $\lbrace E_i\rbrace_{i=0}^N$ of $A$.

\begin{definition}\label{def:ei} \rm For $0 \leq i \leq N$ let $E_i$ denote the primitive idempotent of $A$ for the
eigenvalue $\theta_i$. For notational convenience, define $E_{-1}=0$ and $E_{N+1}=0$.
\end{definition}

\begin{lemma}\label{lem:AE} The matrices $\lbrace E_i \rbrace_{i=0}^N$ form a basis for $M$.
We have  $A= \sum_{i=0}^N \theta_i E_i$ and
\begin{align} \label{eq:EEm}
&E_i E_j = \delta_{i,j} E_i \qquad  (0 \leq i,j\leq N), \qquad \qquad I = \sum_{i=0}^N E_i.
\end{align}
Moreover
\begin{align} \label{eq:AEi}
 E_i=\prod_{\stackrel{0 \leq j \leq N}{j \neq i}}
       \frac{A-\theta_jI}{\theta_i-\theta_j}, \qquad \qquad (0 \leq i \leq N).
\end{align}
\end{lemma}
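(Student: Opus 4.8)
The plan is to invoke standard linear-algebra facts about a diagonalizable matrix together with the results already established in the excerpt. The hypotheses we need are in hand: Lemma \ref{thm:Adiag} says $A$ is diagonalizable, Proposition \ref{thm:Aspec} identifies the eigenvalues of $A$ as the mutually distinct scalars $\lbrace \theta_i\rbrace_{i=0}^N$ (mutual distinctness being the cited lemma following Lemma \ref{lem:thetadiff}), and Definition \ref{def:ei} defines $E_i$ as the primitive idempotent of $A$ for $\theta_i$, with $E_{-1}=E_{N+1}=0$. The statement to prove has four parts: that $\lbrace E_i\rbrace_{i=0}^N$ is a basis for $M$; that $A=\sum_i \theta_i E_i$; the idempotent relations \eqref{eq:EEm}; and the Lagrange-interpolation formula \eqref{eq:AEi}.

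First I would record the basic idempotent identities. Since $V$ is the direct sum of the eigenspaces of $A$ (diagonalizability) and $E_i$ acts as the identity on the $\theta_i$-eigenspace and as zero on the others, the relations $E_iE_j=\delta_{i,j}E_i$ and $I=\sum_{i=0}^N E_i$ are immediate from the definition of primitive idempotent given in Section 2; this is the content of \eqref{eq:EEm}. Likewise, since $A$ acts as $\theta_i$ on the $\theta_i$-eigenspace, applying $\sum_i \theta_i E_i$ to any eigenvector reproduces $A$, giving $A=\sum_{i=0}^N \theta_i E_i$. Next, each $E_i$ lies in $M$: the right-hand side of \eqref{eq:AEi} is a polynomial in $A$, hence lies in $M=\langle A\rangle$, and one checks it equals $E_i$ by evaluating both sides on each eigenspace $E_jV$ — on $E_jV$ the product $\prod_{k\neq i}(A-\theta_kI)/(\theta_i-\theta_k)$ acts as the scalar $\prod_{k\neq i}(\theta_j-\theta_k)/(\theta_i-\theta_k)$, which is $\delta_{i,j}$ because the numerator has a zero factor unless $j=i$. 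This simultaneously proves \eqref{eq:AEi} and shows $E_i\in M$.

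Finally, for the basis claim: the $E_i$ are linearly independent because if $\sum_i c_iE_i=0$ then multiplying by $E_j$ and using \eqref{eq:EEm} gives $c_jE_j=0$, and $E_j\neq 0$ since $\theta_j$ is genuinely an eigenvalue; and they span $M$ because $M$ is generated by $A$, every power $A^n=\sum_i\theta_i^nE_i$ lies in ${\rm Span}\lbrace E_i\rbrace_{i=0}^N$, so $M\subseteq{\rm Span}\lbrace E_i\rbrace$, while the reverse inclusion is the fact $E_i\in M$ just established. Hence $\lbrace E_i\rbrace_{i=0}^N$ is a basis for $M$, and incidentally $\dim M=N+1$.

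There is no real obstacle here; the statement is a routine application of the spectral decomposition of a diagonalizable operator with simple (as eigenvalues, not eigenspaces) spectrum, and every nontrivial input — diagonalizability, the list of eigenvalues, their distinctness — has already been proved. The only point requiring a sentence of care is the order in which one establishes things: one should prove the interpolation formula \eqref{eq:AEi} (which yields $E_i\in M$) before the basis claim, rather than the order in which they are stated, since the basis claim depends on $E_i\in M$.
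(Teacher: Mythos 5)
Your proof is correct and matches the paper's approach: the paper disposes of this lemma with the one-line justification ``By Definition \ref{def:ei} and linear algebra,'' and your write-up simply supplies the standard spectral-decomposition details (idempotent relations, Lagrange interpolation, linear independence and spanning) that the paper leaves implicit. Your remark about proving \eqref{eq:AEi} before the basis claim is a sensible organizational point but not a substantive difference.
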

\begin{proof} By Definition \ref{def:ei} and linear algebra.
\end{proof}

\begin{lemma} \label{lem:Com2}
We have
\begin{align*}
V = \sum_{i=0}^N E_iV  \qquad \quad \hbox{\rm (direct sum)}.
\end{align*}
For $0 \leq i \leq N$ the subspace $E_iV$ is the eigenspace of $A$ for the eigenvalue $\theta_i$. Moreover,
\begin{align*}
{\rm dim}\, E_iV = \binom{N}{i}_q.
\end{align*}
\end{lemma}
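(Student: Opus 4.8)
The plan is to assemble the statement from results already in hand, essentially as a corollary of Proposition \ref{thm:Aspec} together with the linear-algebra facts recorded in Lemma \ref{lem:AE}. First I would recall that $A$ is diagonalizable (Lemma \ref{thm:Adiag}), so by the general theory of primitive idempotents summarized in Section 2 and in Lemma \ref{lem:AE} we have the direct sum decomposition $V=\sum_{i=0}^N E_iV$, where $E_iV$ is by definition the eigenspace of $A$ for the eigenvalue $\theta_i$. Here we are using that the eigenvalues $\{\theta_i\}_{i=0}^N$ are mutually distinct (the Lemma following Lemma \ref{lem:thetadiff}), so that the index set of the sum is genuinely $\{0,1,\dots,N\}$ and there is no collapsing.

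Next I would pin down the dimension statement. Proposition \ref{thm:Aspec} already asserts that the eigenvalues of $A$ are exactly $\{\theta_i\}_{i=0}^N$ and that the $\theta_i$-eigenspace has dimension $\binom{N}{i}_q$; since $E_iV$ \emph{is} that eigenspace, ${\rm dim}\,E_iV=\binom{N}{i}_q$ follows immediately. As a sanity check one can note $\sum_{i=0}^N \binom{N}{i}_q = |X| = {\rm dim}\,V$, which is consistent with the direct sum; this also re-confirms that the listed eigenspaces exhaust $V$.

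So the proof is essentially a two-line dereference: cite Lemma \ref{lem:AE} (or the Section 2 discussion of primitive idempotents) for the direct sum and the identification of $E_iV$ as the $\theta_i$-eigenspace, and cite Proposition \ref{thm:Aspec} for the eigenvalue list and the dimensions. There is no real obstacle here — the substantive work (diagonalizability via the symmetrization $DAD^{-1}$, the eigenvalue computation via the irreducible $T$-modules and the dual $q$-Krawtchouk Leonard systems, and the multiplicity bookkeeping through Lemma \ref{lem:formula}) has all been done already in Sections 9--10. If I wanted to make the write-up fully self-contained I would briefly remark why the $E_iV$ are nonzero for every $i$ in the range — this is exactly the content of the dimension formula $\binom{N}{i}_q\geq 1$ — but that too is subsumed in Proposition \ref{thm:Aspec}.

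\begin{proof}
Since $A$ is diagonalizable (Lemma \ref{thm:Adiag}), Lemma \ref{lem:AE} gives $I=\sum_{i=0}^N E_i$ with $E_iE_j=\delta_{i,j}E_i$, and hence $V=\sum_{i=0}^N E_iV$ (direct sum), where for $0\leq i\leq N$ the subspace $E_iV$ is the eigenspace of $A$ for $\theta_i$. The dimension formula ${\rm dim}\,E_iV=\binom{N}{i}_q$ is then immediate from Proposition \ref{thm:Aspec}.
\end{proof}
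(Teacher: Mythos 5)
Your proof is correct and follows essentially the same route as the paper, which simply cites Proposition \ref{thm:Aspec} and Definition \ref{def:ei}; your added invocation of Lemma \ref{lem:AE} (and Lemma \ref{thm:Adiag}) just makes the linear-algebra bookkeeping behind that citation explicit.
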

\begin{proof} By Proposition  \ref{thm:Aspec} and Definition \ref{def:ei}.
\end{proof}

\section{A $Q$-polynomial structure for $L_N(q)$}

\noindent  In this section, we display a $Q$-polynomial structure for $L_N(q)$. This structure will be obtained using $A$ and $A^*$. Our first step is to show that
\begin{align}
E_i A^* E_j = \begin{cases} 0, & \mbox{ if $\vert i-j\vert >1$}; \\
                                            \not=0, & \mbox{ if $\vert i-j\vert=1$}.
                         \end{cases}    \qquad \qquad (0 \leq i,j\leq N).     \label{eq:EAsE}
\end{align}
\noindent This will be established over the next two lemmas.
\begin{lemma}\label{lem:EAsEZ}
Let $0 \leq i,j\leq N$ such that $\vert i-j\vert >1$. Then $E_i A^* E_j=0$.
\end{lemma}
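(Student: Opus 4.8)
The plan is to exploit the tridiagonal relation \eqref{eq:TDrel} (equivalently \eqref{eq:Serre} for the roles of $A$ and $A^*$ swapped) in exactly the way Proposition \ref{prop:qSerre} was proved, but now sandwiching between primitive idempotents of $A$ rather than of $A^*$. First I would multiply the tridiagonal relation \eqref{eq:TDrel} on the left by $E_i$ and on the right by $E_j$. Using $E_i A = \theta_i E_i$ and $A E_j = \theta_j E_j$, every term collapses so that the left-hand side becomes $E_i A^* E_j$ times the scalar
\begin{align*}
\theta_i^3 - (\beta+1)\theta_i^2\theta_j + (\beta+1)\theta_i\theta_j^2 - \theta_j^3 = (\theta_i-\theta_j)(\theta_i - q\theta_j)(\theta_i - q^{-1}\theta_j),
\end{align*}
and the right-hand side becomes $\varrho\,E_i A^* E_j\,(\theta_i - \theta_j)$. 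Subtracting, I get
\begin{align*}
E_i A^* E_j \Bigl[(\theta_i - \theta_j)(\theta_i - q\theta_j)(\theta_i - q^{-1}\theta_j) - \varrho(\theta_i-\theta_j)\Bigr] = 0,
\end{align*}
so it suffices to show the bracketed scalar is nonzero whenever $|i-j| > 1$.

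The key step is therefore a computation of eigenvalue differences. When $i = j$ the bracket vanishes, which is why this argument only handles $|i-j| > 1$ (the case $|i-j| = 1$, the nonvanishing, is the content of the next lemma and presumably uses the explicit irreducible $T$-module structure from Proposition \ref{thm:Wrep}). For $i \neq j$ I can divide out $(\theta_i - \theta_j)$ and it remains to show $(\theta_i - q\theta_j)(\theta_i - q^{-1}\theta_j) \neq \varrho$ for $|i-j| > 1$. Using Definition \ref{def:theta}, one computes $\theta_i - q\theta_j$ and $\theta_i - q^{-1}\theta_j$ directly; a convenient route is to write $\theta_i = \frac{\varphi q^{N-i} - q^i}{q-1}$ and check that the product factors nicely — I expect $(\theta_i - q\theta_j)(\theta_i - q^{-1}\theta_j)$ to simplify to an expression of the form $\varphi q^{N-i-j+1}\cdot(\text{something}) \cdot \frac{1}{(q-1)^2}$ that, after comparison with $\varrho = \varphi q^{N-2}(q+1)^2$, reduces to a statement purely about powers of $q$ and the indices $i,j$. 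Since $q > 1$ and $\varphi > 0$, the resulting equality can only hold for a narrow range of $i-j$; I anticipate it forces $|i-j| \le 1$, giving the contradiction.

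The main obstacle will be the bookkeeping in that eigenvalue computation: the presence of the free parameter $\varphi$ means the bracket is a genuinely two-variable expression in $\varphi$ and $q$, and I need to confirm it cannot vanish for \emph{any} admissible $\varphi > 0$ when $|i-j| > 1$. The cleanest way to organize this is probably to use Lemma \ref{lem:thetadiff}, which already gives $\theta_i - \theta_j = (1 + \varphi q^{N-i-j})\frac{q^j - q^i}{q-1}$, and to derive analogous closed forms for $\theta_i - q\theta_j$ and $\theta_i - q^{-1}\theta_j$ by the same manipulation; then the vanishing condition becomes a polynomial identity in the single quantity $\varphi q^{N-i-j}$ (together with an explicit power of $q$ depending on $i-j$), whose roots I can locate and rule out by sign/positivity when $|i-j| > 1$. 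Once the scalar is shown nonzero, $E_i A^* E_j = 0$ follows immediately, completing the proof.
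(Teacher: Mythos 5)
Your plan is exactly the paper's argument: sandwich the tridiagonal relation \eqref{eq:TDrel} between $E_i$ and $E_j$, collapse using $E_iA=\theta_iE_i$ and $AE_j=\theta_jE_j$, and show the resulting scalar coefficient of $E_iA^*E_j$ is nonzero when $|i-j|>1$. The one thing you leave open --- the ``bookkeeping'' of whether the bracket can vanish for some $\varphi>0$ --- is resolved in the paper by the explicit factorization
\begin{align*}
(\theta_i-\theta_j)\bigl(\theta_i^2-\beta\theta_i\theta_j+\theta_j^2-\varrho\bigr)
=(\theta_i-\theta_j)\,\frac{(q^i-q^{j+1})(q^i-q^{j-1})(1+\varphi q^{N-i-j+1})(1+\varphi q^{N-i-j-1})}{(q-1)^2},
\end{align*}
after which each factor is visibly nonzero: $\theta_i\neq\theta_j$ since $i\neq j$, the middle two factors are nonzero since $i-j\notin\{1,-1\}$, and the last two are positive since $\varphi,q>0$. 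So your approach is correct and coincides with the paper's; to make it a complete proof you should carry out this factorization (or an equivalent computation via Lemma \ref{lem:thetadiff}) rather than leaving it as anticipated.
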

\begin{proof} Referring to \eqref{eq:TDrel}, let $C$ denote the left-hand side minus the right-hand side. So $C=0$.
 Using Definition \ref{def:theta} along with 
$E_i A = \theta_i E_i$ and $A E_j = \theta_j E_j$,  we obtain
\begin{align*}
0 &= E_i C E_j \\
    &= E_i A^* E_j \Bigl( \theta^3_i - (\beta+1) \theta^2_i \theta_j +(\beta+1)\theta_i \theta^2_j - \theta^3_j - \varrho(\theta_i - \theta_j)    \Bigr) \\
    &= E_i A^* E_j (\theta_i - \theta_j)\bigl( \theta^2_i - \beta \theta_i \theta_j + \theta^2_j-\varrho\bigr) \\
    & = E_i A^* E_j (\theta_i-\theta_j) \frac{ (q^i-q^{j+1})(q^i-q^{j-1})(1+\varphi q^{N-i-j+1})(1+\varphi q^{N-i-j-1})}{(q-1)^2}.
\end{align*}
In the previous expression, the coefficient of $E_i A^* E_j$ is equal to $\theta_i-\theta_j$ times a fraction. We have $\theta_i - \theta_j \not=0$, since $i \not=j$ and
$\lbrace \theta_\ell \rbrace_{\ell=0}^N$ are mutually distinct. The fraction has four factors in the numerator. The first two factors are nonzero, because $i-j\not\in \lbrace 1,-1\rbrace$ and $q$ is not a root
of unity. The last two factors are nonzero, because $\varphi,q $ are positive real numbers. By these comments, the coefficient of $E_i A^* E_j$ is nonzero. Therefore $E_iA^*E_j=0$.
\end{proof} 

\begin{lemma}\label{lem:Primary} Let $0 \leq i,j\leq N$ such that $\vert i-j \vert =1$. Then $E_iA^*E_j\not=0$.
\end{lemma}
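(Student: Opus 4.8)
The plan is to argue by contradiction, using the structure of the irreducible $T$-modules from Sections 9 and 10. Since a pair $(i,j)$ with $|i-j|=1$ is of the form $(i,i+1)$ or $(i,i-1)$, and $E_iA^*E_{i-1}$ is of the form $E_{\ell+1}A^*E_\ell$ with $\ell=i-1$, it suffices to prove that $E_iA^*E_{i+1}\neq 0$ and $E_{i+1}A^*E_i\neq 0$ for all $0\le i\le N-1$.

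First I would handle $E_{i+1}A^*E_i$. Suppose $E_{i+1}A^*E_i=0$ for some $i$ with $0\le i\le N-1$, and put $U=\sum_{k=0}^iE_kV$. For $0\le k\le i$ we have $A^*E_kV\subseteq E_{k-1}V+E_kV+E_{k+1}V$ by Lemma \ref{lem:EAsEZ}; when $k<i$ this lies in $U$, and when $k=i$ the $E_{i+1}V$-component is $E_{i+1}A^*E_iV=0$, so again $A^*E_iV\subseteq U$. Hence $A^*U\subseteq U$, while $AU\subseteq U$ is automatic, so by Lemma \ref{lem:TAA} $U$ is a $T$-submodule, and it is nonzero since $E_0V\subseteq U$. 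As $T$ is closed under the conjugate-transpose map (see the proof of Lemma \ref{lem:ods}), $U$ is a direct sum of irreducible $T$-submodules. Because $E_0V\subseteq U$ is the $\theta_0$-eigenspace of $A$, one of these irreducible summands $W$ has $\theta_0$ among the eigenvalues of $A$ acting on $W$. By Proposition \ref{prop:Weigval} those eigenvalues are $\theta_r,\dots,\theta_{N-r}$, where $r$ is the endpoint of $W$ and $0\le r\le N/2$; thus $r=0$, so by Lemma \ref{lem:Wshape} the diameter of $W$ equals $N$ and $\theta_N$ is an eigenvalue of $A$ on $W$. A corresponding eigenvector is a nonzero element of $W\cap E_NV\subseteq U\cap E_NV$, which is zero because $i<N$ and the sum $\sum_{k=0}^NE_kV$ is direct. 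This contradiction proves $E_{i+1}A^*E_i\neq 0$.

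The claim $E_iA^*E_{i+1}\neq 0$ follows by the mirror-image argument: if $E_iA^*E_{i+1}=0$, then $U''=\sum_{k=i+1}^NE_kV$ is a nonzero $T$-submodule—now the hypothesis annihilates the $E_iV$-component of $A^*E_{i+1}V$—and it contains the $\theta_N$-eigenspace $E_NV$ of $A$, hence has an irreducible summand on which $A$ has eigenvalue $\theta_N$. By Proposition \ref{prop:Weigval} that summand has endpoint $0$, hence diameter $N$ by Lemma \ref{lem:Wshape}, so $A$ also has eigenvalue $\theta_0$ on it, producing a nonzero vector in $U''\cap E_0V=0$, a contradiction.

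I expect the only point requiring care to be the bookkeeping that makes $U$ (resp. $U''$) $A^*$-invariant: all off-tridiagonal blocks $E_mA^*E_k$ already vanish by Lemma \ref{lem:EAsEZ}, and the single remaining boundary block is exactly the one the hypothesis kills. The remaining ingredients—complete reducibility of $T$-modules and the list of eigenvalues of $A$ on an irreducible $T$-module of given endpoint—are already in hand from Lemmas \ref{lem:ods}, \ref{lem:Wshape} and Proposition \ref{prop:Weigval}, so no further obstacle arises.
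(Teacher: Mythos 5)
Your proof is correct, and it rests on the same two pillars as the paper's proof: the tridiagonal block structure of $A^*$ with respect to $\lbrace E_k\rbrace$ from Lemma \ref{lem:EAsEZ}, and the fact from Proposition \ref{prop:Weigval} that $A$ restricted to an irreducible $T$-submodule of endpoint $r$ has eigenvalue set $\lbrace \theta_r,\dots,\theta_{N-r}\rbrace$. The organization, however, is genuinely different. The paper begins by fixing, via Lemma \ref{lem:mult}, a single irreducible $T$-submodule $W$ of endpoint $0$, so that $E_\ell W\neq 0$ for $0\le\ell\le N$; then if $E_iA^*E_j$ vanished with $i-j=1$ (resp.\ $j-i=1$), the subspace $\sum_{\ell\le j}E_\ell W$ (resp.\ $\sum_{\ell\ge j}E_\ell W$) would be a nonzero proper $T$-submodule of $W$, contradicting irreducibility directly. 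You instead form the subspace $U=\sum_{k\le i}E_kV$ (resp.\ $U''=\sum_{k\ge i+1}E_kV$) of the whole standard module, show it is $T$-invariant using the hypothesized vanishing, decompose it into irreducibles by the argument of Lemma \ref{lem:ods}, extract a summand of endpoint $0$ from the presence of a $\theta_0$-eigenvector (resp.\ $\theta_N$-eigenvector), and derive a contradiction because $U$ omits $E_NV$ (resp.\ $U''$ omits $E_0V$). Your route is a bit longer and needs complete reducibility of arbitrary $T$-submodules of $V$, which the paper sidesteps by working inside $W$ from the start; on the other hand you never need to cite Lemma \ref{lem:mult}, since the existence of an endpoint-$0$ summand falls out of the classification of eigenvalues. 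Both arguments are sound.
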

\begin{proof} By Lemma \ref{lem:mult}, there exists an irreducible $T$-submodule $W$ of $V$ that has endpoint $0$. By Proposition \ref{prop:Weigval}, the action of $A$ on $W$ is multiplicity-free with eigenvalues $\lbrace \theta_\ell \rbrace_{\ell=0}^N$.
So $E_\ell W\not=0$ for $0 \leq \ell \leq N$. First assume that $i-j=1$. Then $E_i A^* E_j W \not=0$; otherwise $\sum_{\ell=0}^j E_{\ell} W$ is a nonzero $T$-submodule that is properly contained in
$W$, contradicting the assumption that the $T$-module $W$ is irreducible. Next assume that $j-i=1$. Then $E_i A^* E_j W \not=0$; otherwise $\sum_{\ell=j}^N E_{\ell} W$ is a nonzero $T$-submodule that is properly contained in
$W$, contradicting the assumption that the $T$-module $W$ is irreducible.  We have shown that  $E_i A^* E_j W \not=0$. Therefore, $E_i A^*E_j \not=0$.
\end{proof}

\noindent We have established   \eqref{eq:EAsE}. Next we consider the implications.

\begin{proposition} \label{prop:TTT} We have
\begin{align*}
A^* E_iV \subseteq E_{i-1}V + E_iV + E_{i+1}V \qquad \qquad (0 \leq i \leq N).
\end{align*}
\end{proposition}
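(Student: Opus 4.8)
The plan is to mimic exactly the argument used for the analogous statement about $A$ acting block-tridiagonally on the eigenspaces of $A^*$, but now with the roles of $A$ and $A^*$ interchanged. The key input is the relation \eqref{eq:EAsE}, which has just been established over Lemmas \ref{lem:EAsEZ} and \ref{lem:Primary}: for $0\le i,j\le N$ we have $E_iA^*E_j=0$ whenever $|i-j|>1$. This is the substantive content; the Proposition is then a short formal consequence.

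First I would write $A^*=IA^*I$ and expand both copies of $I$ using the resolution of the identity $I=\sum_{k=0}^N E_k$ from \eqref{eq:EEm}, obtaining $A^*=\sum_{k=0}^N\sum_{\ell=0}^N E_kA^*E_\ell$. Fix $i$ with $0\le i\le N$. Multiplying on the right by $E_i$ and using $E_\ell E_i=\delta_{\ell,i}E_i$ from \eqref{eq:EEm}, I get $A^*E_i=\sum_{k=0}^N E_kA^*E_i$. Now by \eqref{eq:EAsE} the summands with $|k-i|>1$ vanish, so the sum collapses to $A^*E_i=E_{i-1}A^*E_i+E_iA^*E_i+E_{i+1}A^*E_i$ (here the conventions $E_{-1}=0$ and $E_{N+1}=0$ from Definition \ref{def:ei} handle the boundary cases $i=0$ and $i=N$ automatically).

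Finally I would apply both sides to an arbitrary vector of $E_iV$. Since $E_k$ acts as a projection onto $E_kV$, each term $E_{i-1}A^*E_i$, $E_iA^*E_i$, $E_{i+1}A^*E_i$ maps $V$ into $E_{i-1}V$, $E_iV$, $E_{i+1}V$ respectively. Hence $A^*E_iV\subseteq E_{i-1}V+E_iV+E_{i+1}V$, as claimed.

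There is essentially no obstacle here: the only nontrivial step, namely \eqref{eq:EAsE}, has already been proved, and what remains is the same two-line idempotent manipulation used earlier in the paper (for instance in the proof that $AE^*_iV\subseteq E^*_{i-1}V+E^*_iV+E^*_{i+1}V$). If anything warrants a word of care, it is simply noting that the boundary terms are correctly absorbed by the conventions $E_{-1}=E_{N+1}=0$, so that the statement is uniform in $i$.
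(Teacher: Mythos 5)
Your argument is essentially identical to the paper's proof: both expand $A^*E_i = \bigl(\sum_k E_k\bigr)A^*E_i$ using the resolution of the identity and then invoke \eqref{eq:EAsE} to collapse the sum to the three terms $E_{i-1}A^*E_i + E_iA^*E_i + E_{i+1}A^*E_i$, from which the inclusion follows. The extra step of first writing $A^*=IA^*I$ and then multiplying by $E_i$ is a harmless elaboration; the content is the same.
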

\begin{proof} By $I = \sum_{i=0}^N E_i$ and \eqref{eq:EAsE},  
\begin{align*}
          A^* E_i &= (E_0 + E_1+ \cdots + E_N)A^* E_i \\
                       &= E_{i-1}A^* E_i + E_i A^* E_i + E_{i+1} A^* E_i.
\end{align*}
The result follows.
\end{proof}

\begin{definition}\label{def:dualAdj} \rm (See \cite[Definition~20.6]{int}.) The matrix $A^*$ is called a {\it dual adjacency matrix of $L_N(q)$} (with respect to the vertex $\bf 0$ and the
given ordering $\lbrace E_i \rbrace_{i=0}^N$) whenever  $A^*$ generates $M^*$ and
\begin{align*}
 A^* E_iV \subseteq E_{i-1}V + E_iV + E_{i+1}V \qquad \quad (0 \leq i \leq N).
\end{align*}
\end{definition}

\begin{lemma} \label{lem:DAdj} The matrix $A^*$ is a dual adjacency matrix of $L_N(q)$ with respect to the vertex $\bf 0$ and the ordering  $\lbrace E_i \rbrace_{i=0}^N$.
\end{lemma}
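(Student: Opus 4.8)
The plan is to verify Definition \ref{def:dualAdj} directly, since essentially all the work has been done. The statement asks for two things: that $A^*$ generates $M^*$, and that $A^*$ acts on the eigenspaces of $A$ in a block-tridiagonal fashion with respect to the ordering $\lbrace E_i \rbrace_{i=0}^N$.

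First I would recall that $A^*$ generates $M^*$; this was observed in the comments following Definition \ref{def:As} (the formula \eqref{eq:Ais} expresses each $E^*_i$ as a polynomial in $A^*$, and the $\lbrace E^*_i \rbrace_{i=0}^N$ form a basis for $M^*$ by Definition \ref{def:Ms}). Second, the required containment $A^* E_iV \subseteq E_{i-1}V + E_iV + E_{i+1}V$ for $0 \leq i \leq N$ is exactly the content of Proposition \ref{prop:TTT}. So the proof is just a one-line citation: by the comments below Definition \ref{def:As} together with Proposition \ref{prop:TTT}, the matrix $A^*$ meets both conditions of Definition \ref{def:dualAdj}.

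There is no real obstacle here; the substantive work lies upstream, in establishing \eqref{eq:EAsE} via Lemmas \ref{lem:EAsEZ} and \ref{lem:Primary}, which in turn rest on the tridiagonal relation \eqref{eq:TDrel} of Proposition \ref{thm:TDrel} and on the existence of an irreducible $T$-module of endpoint $0$ (Lemma \ref{lem:mult}). Assuming those, the present lemma is immediate.

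\begin{proof}
The matrix $A^*$ generates $M^*$ by the comments below Definition \ref{def:As}. The required containment
\begin{align*}
A^* E_iV \subseteq E_{i-1}V + E_iV + E_{i+1}V \qquad \quad (0 \leq i \leq N)
\end{align*}
holds by Proposition \ref{prop:TTT}. Now the result follows from Definition \ref{def:dualAdj}.
\end{proof}
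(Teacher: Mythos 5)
Your proof is correct and essentially identical to the paper's: both cite the observation that $A^*$ generates $M^*$ (the note below \eqref{eq:Ais}, in the comments following Definition \ref{def:As}), the containment from Proposition \ref{prop:TTT}, and Definition \ref{def:dualAdj}.
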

\begin{proof}  By the note below   \eqref{eq:Ais},  along with   Proposition \ref{prop:TTT} and Definition \ref{def:dualAdj}.
\end{proof}
\begin{definition}\label{def:EQpoly} \rm {\rm (See \cite[Definition~20.7]{int}.)} The ordering  $\lbrace E_i \rbrace_{i=0}^N$ is said to be {\it $Q$-polynomial with respect to the vertex $\bf 0$} whenever there exists
a dual adjacency matrix of $L_N(q)$ with respect to $\bf 0$ and the ordering $\lbrace E_i \rbrace_{i=0}^N$.
\end{definition} 

\begin{proposition} \label{thm:EQpoly} The ordering $\lbrace E_i \rbrace_{i=0}^N$ is $Q$-polynomial with respect to the vertex $\bf 0$.
\end{proposition}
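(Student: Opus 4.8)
The plan is to observe that Proposition \ref{thm:EQpoly} is an immediate consequence of the machinery already assembled, and in particular of Lemma \ref{lem:DAdj}. The statement asserts that the ordering $\lbrace E_i \rbrace_{i=0}^N$ is $Q$-polynomial with respect to the vertex $\bf 0$, which by Definition \ref{def:EQpoly} means precisely that there exists a dual adjacency matrix of $L_N(q)$ with respect to $\bf 0$ and the ordering $\lbrace E_i \rbrace_{i=0}^N$. So the only thing to do is to exhibit one such matrix.

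The natural candidate is $A^*$ itself. First I would recall that by Lemma \ref{lem:DAdj}, the matrix $A^*$ is a dual adjacency matrix of $L_N(q)$ with respect to the vertex $\bf 0$ and the ordering $\lbrace E_i \rbrace_{i=0}^N$; that lemma in turn rests on the fact (noted below \eqref{eq:Ais}) that $A^*$ generates $M^*$, together with the containment $A^* E_iV \subseteq E_{i-1}V + E_iV + E_{i+1}V$ for $0 \leq i \leq N$ established in Proposition \ref{prop:TTT}. Thus the existence requirement in Definition \ref{def:EQpoly} is met, with $A^*$ serving as the witness, and the proof is complete.

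There is essentially no obstacle here: the real content — the vanishing $E_i A^* E_j = 0$ for $\vert i-j\vert > 1$ (Lemma \ref{lem:EAsEZ}, via the tridiagonal relation \eqref{eq:TDrel}) and the nonvanishing $E_i A^* E_j \neq 0$ for $\vert i-j\vert = 1$ (Lemma \ref{lem:Primary}, via the existence of an endpoint-$0$ irreducible $T$-module) — has already been carried out to produce \eqref{eq:EAsE}, and Proposition \ref{prop:TTT} packaged it into the block-tridiagonal action. All that remains is the bookkeeping of citing Lemma \ref{lem:DAdj} and Definition \ref{def:EQpoly}. I would therefore write the proof as a single sentence invoking these two results.

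\begin{proof} By Lemma \ref{lem:DAdj}, the matrix $A^*$ is a dual adjacency matrix of $L_N(q)$ with respect to the vertex $\bf 0$ and the ordering $\lbrace E_i \rbrace_{i=0}^N$. In particular, there exists a dual adjacency matrix of $L_N(q)$ with respect to $\bf 0$ and the ordering $\lbrace E_i \rbrace_{i=0}^N$, so by Definition \ref{def:EQpoly} the ordering $\lbrace E_i \rbrace_{i=0}^N$ is $Q$-polynomial with respect to the vertex $\bf 0$.
\end{proof}
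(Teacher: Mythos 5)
Your proof is correct and matches the paper's own argument exactly: both invoke Lemma \ref{lem:DAdj} to exhibit $A^*$ as the required dual adjacency matrix and then apply Definition \ref{def:EQpoly}. Nothing more is needed.
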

\begin{proof} By Lemma \ref{lem:DAdj} and Definition \ref{def:EQpoly}.
\end{proof}

\begin{definition} \label{def:AQpoly} \rm {\rm (See \cite[Definition~20.8]{int}.)} The matrix $A$ is said to be {\it $Q$-polynomial with respect to the vertex $\bf 0$} whenever there exists an ordering
of the primitive idempotents of $A$ that is $Q$-polynomial with respect to $\bf 0$.
\end{definition}

\begin{theorem} \label{cor:AQpoly} The  matrix $A$ is $Q$-polynomial with respect to the vertex $\bf 0$.
\end{theorem}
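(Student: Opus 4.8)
The plan is to deduce Theorem \ref{cor:AQpoly} directly from the chain of results already assembled in Section 12. By Definition \ref{def:AQpoly}, the matrix $A$ is $Q$-polynomial with respect to $\bf 0$ precisely when some ordering of the primitive idempotents of $A$ is $Q$-polynomial with respect to $\bf 0$ in the sense of Definition \ref{def:EQpoly}. So it suffices to exhibit one such ordering, and the natural candidate is the ordering $\lbrace E_i \rbrace_{i=0}^N$ fixed in Definition \ref{def:ei}, indexed so that $E_i$ is the primitive idempotent of $A$ for the eigenvalue $\theta_i$ from Definition \ref{def:theta}.

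First I would invoke Proposition \ref{thm:EQpoly}, which asserts exactly that the ordering $\lbrace E_i \rbrace_{i=0}^N$ is $Q$-polynomial with respect to $\bf 0$. This in turn rests on Lemma \ref{lem:DAdj} (that $A^*$ is a dual adjacency matrix of $L_N(q)$ relative to $\bf 0$ and this ordering), which combines two ingredients: the note below \eqref{eq:Ais} that $A^*$ generates $M^*$, and Proposition \ref{prop:TTT} that $A^* E_iV \subseteq E_{i-1}V + E_iV + E_{i+1}V$ for $0 \leq i \leq N$. The block-tridiagonal containment of Proposition \ref{prop:TTT} is itself a formal consequence of the key relation \eqref{eq:EAsE}, namely $E_i A^* E_j = 0$ when $\vert i-j\vert > 1$ and $E_i A^* E_j \neq 0$ when $\vert i-j\vert = 1$, together with the resolution of the identity $I = \sum_{i=0}^N E_i$ from \eqref{eq:EEm}.

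Thus the entire argument is a matter of citing the already-proven facts in the right order: Lemmas \ref{lem:EAsEZ} and \ref{lem:Primary} give \eqref{eq:EAsE}; \eqref{eq:EAsE} with \eqref{eq:EEm} gives Proposition \ref{prop:TTT}; Proposition \ref{prop:TTT} with the generation statement gives Lemma \ref{lem:DAdj}; Lemma \ref{lem:DAdj} with Definition \ref{def:EQpoly} gives Proposition \ref{thm:EQpoly}; and Proposition \ref{thm:EQpoly} with Definition \ref{def:AQpoly} gives the theorem. There is no real obstacle remaining at this stage of the paper: all the substantive work, in particular the use of the tridiagonal relation \eqref{eq:TDrel} to kill the off-diagonal blocks in Lemma \ref{lem:EAsEZ} and the irreducible $T$-module argument producing a nonvanishing block in Lemma \ref{lem:Primary}, has been completed. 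The only care needed is to make sure the ordering of primitive idempotents being quoted is the one attached to the eigenvalues $\theta_i$, so that Proposition \ref{prop:TTT} applies verbatim. Hence the proof is simply: apply Proposition \ref{thm:EQpoly} and Definition \ref{def:AQpoly}.

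\begin{proof}
By Proposition \ref{thm:EQpoly}, the ordering $\lbrace E_i \rbrace_{i=0}^N$ of the primitive idempotents of $A$ is $Q$-polynomial with respect to the vertex $\bf 0$. The result now follows from Definition \ref{def:AQpoly}.
\end{proof}
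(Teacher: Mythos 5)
Your proof is correct and coincides with the paper's: Theorem \ref{cor:AQpoly} is obtained immediately from Proposition \ref{thm:EQpoly} and Definition \ref{def:AQpoly}. Your preliminary recap of the chain (Lemmas \ref{lem:EAsEZ}, \ref{lem:Primary} giving \eqref{eq:EAsE}, then Proposition \ref{prop:TTT}, then Lemma \ref{lem:DAdj}, then Proposition \ref{thm:EQpoly}) accurately reflects how the paper builds to this point.
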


\begin{proof} By Proposition \ref{thm:EQpoly} and Definition \ref{def:AQpoly}.
\end{proof}

\noindent For $\varphi=1$, Theorem \ref{cor:AQpoly}  is the same as
\cite[Theorem~8.4]{Lnq}.

\section{$L_N(q)$ and the quantum group $U_{q^{1/2}} (\mathfrak{sl}_2)$}

\noindent  We continue to discuss the poset $L_N(q)$. Lemma  \ref{lem:AsRL} contains three equations that show how $A^*, R, L$ are related. In \eqref{eq:UqOne}, \eqref{eq:UqTwo} 
there are three equations that show how $A^*, A^+, A^-$ are related. In this section, we will explain
what these six equations have to do with the quantum group $U_{q^{1/2}} (\mathfrak{sl}_2)$. The algebra  $U_{q^{1/2}} (\mathfrak{sl}_2)$
has two familiar presentations in the literature, called the Chevalley presentation \cite[p.~122]{Kassel} and the equitable presentation \cite[Theorem~2.1]{equit}.
As we will see, Lemma \ref{lem:AsRL}  describes  $U_{q^{1/2}} (\mathfrak{sl}_2)$ in the Chevalley presentation, and 
\eqref{eq:UqOne}, \eqref{eq:UqTwo} describe  $U_{q^{1/2}} (\mathfrak{sl}_2)$ in the equitable presentation. We will also see how Propositions  \ref{prop:qSerre}, \ref{thm:TDrel} follow from a result  \cite[Proposition~7.17]{bockting}
concerning  $U_{q^{1/2}} (\mathfrak{sl}_2)$. Before we get into the details, we would like to acknowledge that a connection between $L_N(q)$ and  $U_{q^{1/2}} (\mathfrak{sl}_2)$ in the Chevalley presentation was previously discussed in
 \cite[Section~7]{LSintro}.
 
 \begin{definition} \label{def:qGroup} \rm (See \cite[p.~122]{Kassel}.)
 The algebra $U_{q^{1/2}} (\mathfrak{sl}_2)$ is defined by generators $K,K^{-1}, E, F$ and the following relations:
 \begin{align*}
&\qquad \quad K K^{-1} = K^{-1} K = 1, \\
& K E =  q E K, \qquad \quad K F = q^{-1} F K, \\
& \qquad EF-FE = \frac{K-K^{-1}}{q^{1/2}-q^{-1/2}}.
 \end{align*}
 We call $K,K^{-1}, E, F$ the {\it Chevalley generators} for $U_{q^{1/2}} (\mathfrak{sl}_2)$.
 \end{definition}

\noindent Next, we interpret Lemma  \ref{lem:AsRL} in terms of the Chevalley generators for $U_{q^{1/2}} (\mathfrak{sl}_2)$.
Recall the standard module $V$ for $L_N(q)$.
\begin{lemma} \label{lem:Umodule} {\rm (See  \cite[Section~7]{LSintro}.)} The standard module $V$ becomes a $U_{q^{1/2}} (\mathfrak{sl}_2)$-module on which
the Chevalley generators $K,K^{-1}, E, F$  act as follows:
\begin{align*} 
\begin{tabular}[t]{c|cccc}
{\rm generator $g$ }& $K$ & $K^{-1}$ & $E$ & $F$
 \\
 \hline
 {\rm action of $g$ on $V$} & $q^{N/2} A^*$ & $q^{-N/2} (A^*)^{-1}$ & $ L$ & $q^{\frac{1-N}{2}}R$
    \end{tabular}
\end{align*}
\end{lemma}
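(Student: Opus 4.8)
The plan is to verify Lemma \ref{lem:Umodule} by directly checking that the proposed assignment of matrices to the Chevalley generators $K, K^{-1}, E, F$ satisfies the defining relations of $U_{q^{1/2}}(\mathfrak{sl}_2)$ from Definition \ref{def:qGroup}. Since $U_{q^{1/2}}(\mathfrak{sl}_2)$ is presented by generators and relations, to equip $V$ with a module structure it suffices to exhibit matrices in ${\rm Mat}_X(\mathbb C)$ satisfying those relations; the generators then act on $V$ by left multiplication. So the whole proof reduces to four computations, each of which is immediate from Lemma \ref{lem:AsRL}.

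First I would record the assignment: $K \mapsto q^{N/2}A^*$, $K^{-1}\mapsto q^{-N/2}(A^*)^{-1}$, $E\mapsto L$, $F\mapsto q^{(1-N)/2}R$. The relation $KK^{-1}=K^{-1}K=1$ is clear since $A^*$ is invertible with inverse $(A^*)^{-1}$ and the scalars $q^{N/2}, q^{-N/2}$ multiply to $1$. For $KE=qEK$: the left side is $q^{N/2}A^*L$ and the right side is $q\cdot q^{N/2}LA^*$, so the relation holds if and only if $A^*L = qLA^*$, which is the first equation of Lemma \ref{lem:AsRL}. Similarly $KF=q^{-1}FK$ becomes $q^{N/2}A^*\cdot q^{(1-N)/2}R = q^{-1}q^{(1-N)/2}R\cdot q^{N/2}A^*$, i.e. $A^*R = q^{-1}RA^*$, which is the second equation of Lemma \ref{lem:AsRL}. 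For the last relation I would compute
\begin{align*}
EF-FE = q^{\frac{1-N}{2}}(LR-RL) = q^{\frac{1-N}{2}}\cdot\frac{q^N A^* - (A^*)^{-1}}{q-1}
\end{align*}
using the third equation of Lemma \ref{lem:AsRL}, and compare this with
\begin{align*}
\frac{K-K^{-1}}{q^{1/2}-q^{-1/2}} = \frac{q^{N/2}A^* - q^{-N/2}(A^*)^{-1}}{q^{1/2}-q^{-1/2}}.
\end{align*}
Multiplying numerator and denominator of the latter by $q^{(1-N)/2}$ one gets $q^{(1-N)/2}(q^{N/2+1/2}A^* - q^{-N/2+1/2}(A^*)^{-1})/(q-1) = q^{(1-N)/2}(q^N A^* - (A^*)^{-1})/(q-1)$ after factoring $q^{1/2}$ out of the numerator and noting $q^{1/2}(q^{1/2}-q^{-1/2})=q-1$; this matches the expression for $EF-FE$ above.

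There is no real obstacle here — the only thing requiring any care is bookkeeping the powers of $q^{1/2}$ in the last relation, making sure the factor $q^{(1-N)/2}$ on $R$ is chosen precisely so that $q^{1/2}-q^{-1/2}$ in the denominator of the Chevalley commutator relation converts correctly into the $q-1$ appearing in Lemma \ref{lem:AsRL}. Once the four relations are checked, the module structure follows, and I would cite \cite[Section~7]{LSintro} for the original appearance of this construction.
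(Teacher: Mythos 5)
Your proof is correct and takes the same approach as the paper, which simply instructs the reader to compare the relations of Lemma \ref{lem:AsRL} with Definition \ref{def:qGroup}; you have carried out that comparison explicitly and the $q$-power bookkeeping is right. One small exposition slip in the final relation: to convert the denominator $q^{1/2}-q^{-1/2}$ into $q-1$ you are multiplying numerator and denominator by $q^{1/2}$ (not by $q^{(1-N)/2}$), and the factor $q^{(1-N)/2}$ then emerges by factoring it out of the resulting numerator $q^{(N+1)/2}A^*-q^{(1-N)/2}(A^*)^{-1}$ — the displayed equality and the conclusion are nonetheless correct.
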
 
\begin{proof} Compare the relations in Lemma \ref{lem:AsRL} and Definition  \ref{def:qGroup}.
\end{proof} 

\noindent Next, we recall the equitable presentation of $U_{q^{1/2}}({\mathfrak{sl}_2})$.
\begin{lemma} \label{lem:Eq}
{\rm
(See \cite[Theorem~2.1]{equit},  \cite[Lemma~5.1]{uqLUS}.)}
The algebra 
$U_{q^{1/2}}({\mathfrak{sl}_2})$ has a presentation  with generators
$\mathcal X$, $\mathcal Y$, $\mathcal Y^{-1}$, $\mathcal Z$ and relations
$\mathcal Y\mathcal Y^{-1} = \mathcal Y^{-1}\mathcal Y = 1$,
\begin{align}
\frac{q\mathcal X \mathcal Y-\mathcal Y \mathcal X}{q-1} = 1,
\qquad \quad
\frac{q  \mathcal Y \mathcal Z- \mathcal Z \mathcal Y}{q-1} = 1,
\qquad \quad
\frac{q \mathcal Z \mathcal X- \mathcal X \mathcal Z}{q-1} = 1. \label{eq:XYZ}
\end{align}
An isomorphism with the presentation in Definition  \ref{def:qGroup} 
 sends
\begin{align*}
 \mathcal X &\mapsto K^{-1} + K^{-1} E  (q-1),
\\
 \mathcal Y^{\pm 1} & \mapsto K^{\pm 1},
\\
 \mathcal Z &\mapsto K^{-1} -  F q^{-1/2}(q-1).
\end{align*}
\noindent The inverse isomorphism sends
\begin{align*}
K^{\pm 1} &\mapsto  \mathcal Y^{\pm 1}, \\
E &\mapsto  ( \mathcal Y \mathcal X-1)(q-1)^{-1},
\\
F &\mapsto  ( \mathcal Y^{-1}- \mathcal Z)q^{1/2}(q-1)^{-1}.
\end{align*}
\end{lemma}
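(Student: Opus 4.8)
The plan is to verify Lemma~\ref{lem:Eq} by a direct computation, using the standard strategy for establishing an isomorphism via explicit generators and relations in opposite directions. Concretely, I would first check that the proposed assignment $\mathcal X \mapsto K^{-1} + K^{-1}E(q-1)$, $\mathcal Y^{\pm 1} \mapsto K^{\pm 1}$, $\mathcal Z \mapsto K^{-1} - Fq^{-1/2}(q-1)$ actually defines an algebra homomorphism $\phi$ from the algebra presented by \eqref{eq:XYZ} (together with $\mathcal Y \mathcal Y^{-1} = \mathcal Y^{-1}\mathcal Y = 1$) into $U_{q^{1/2}}(\mathfrak{sl}_2)$ as presented in Definition~\ref{def:qGroup}. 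For this, one substitutes the images into each of the three relations in \eqref{eq:XYZ} and verifies that each becomes a consequence of the Chevalley relations $KK^{-1}=K^{-1}K=1$, $KE = qEK$, $KF = q^{-1}FK$, $EF - FE = (K - K^{-1})/(q^{1/2}-q^{-1/2})$. The relation $\mathcal Y \mathcal Y^{-1} = 1$ is immediate. For the three $q$-commutator relations, the check is routine but requires care: e.g.\ for the third relation one expands $q\mathcal Z\mathcal X - \mathcal X\mathcal Z$ with the images, collects the terms involving $EF$ and $FE$, applies $KE=qEK$, $KF=q^{-1}FK$ to move the $K^{-1}$'s through, and uses the $EF-FE$ relation to land on the constant $q-1$.

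Next I would check the reverse assignment $\psi$ given by $K^{\pm 1} \mapsto \mathcal Y^{\pm 1}$, $E \mapsto (\mathcal Y\mathcal X - 1)(q-1)^{-1}$, $F \mapsto (\mathcal Y^{-1} - \mathcal Z)q^{1/2}(q-1)^{-1}$ is an algebra homomorphism in the other direction, by substituting these images into the four Chevalley relations and verifying each one follows from \eqref{eq:XYZ}. The relations $KK^{-1}=K^{-1}K=1$ and $KE=qEK$, $KF=q^{-1}FK$ reduce directly to the defining relations of the equitable presentation after clearing denominators; the $EF - FE$ relation requires expanding $(\mathcal Y\mathcal X - 1)(\mathcal Y^{-1}-\mathcal Z)$ and $(\mathcal Y^{-1}-\mathcal Z)(\mathcal Y\mathcal X - 1)$, using the three $q$-commutators in \eqref{eq:XYZ} to simplify, and confirming the result equals $(\mathcal Y - \mathcal Y^{-1})/(q^{1/2}-q^{-1/2})$ up to the scalar factors. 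Finally, I would confirm that $\phi$ and $\psi$ are mutually inverse by composing them on generators: $\psi(\phi(\mathcal Y^{\pm 1})) = \mathcal Y^{\pm 1}$ is clear, and $\psi(\phi(\mathcal X)) = \psi(K^{-1}+K^{-1}E(q-1)) = \mathcal Y^{-1} + \mathcal Y^{-1}(\mathcal Y\mathcal X - 1) = \mathcal X$, with the analogous cancellation for $\mathcal Z$; the other composition $\phi \circ \psi$ is checked the same way on $K^{\pm 1}, E, F$.

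The main obstacle, such as it is, is purely bookkeeping: keeping track of the powers of $q$ and $q^{1/2}$ when moving $K^{\pm 1}$ past $E$ and $F$, and making sure the $(q-1)$ and $q^{\pm 1/2}$ normalizations in the isomorphism formulas are handled consistently when clearing denominators. There is no conceptual difficulty, since this is essentially the content of \cite[Theorem~2.1]{equit} specialized to $U_{q^{1/2}}(\mathfrak{sl}_2)$, and I would simply cite that reference (and \cite[Lemma~5.1]{uqLUS}) for the statement, remarking that the verification is a direct calculation from Definition~\ref{def:qGroup} and the relations \eqref{eq:XYZ}.
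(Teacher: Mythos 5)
Your plan for a direct verification is sound and complete: showing both assignments are well-defined homomorphisms (by substituting into the respective defining relations) and that they are mutually inverse on generators is exactly the standard argument for matching two presentations, and the bookkeeping you describe does go through. However, the paper itself does none of this computation. Its proof is a single line: it observes that the stated presentation and isomorphism formulas are precisely \cite[Lemma~5.1]{uqLUS} after replacing $q$ there by $q^{1/2}$ and specializing the parameters of that lemma to $\theta=-1$ and $t=0$. So the paper's route is a targeted citation with an explicit specialization, whereas yours is a self-contained computation. Both are valid; the paper's is shorter but requires the reader to consult \cite{uqLUS}, while yours would make the lemma independent of that reference at the cost of carrying out the $q$-commutator manipulations you outline. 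One small point: at the end you say you would ``simply cite'' \cite[Theorem~2.1]{equit} (and \cite[Lemma~5.1]{uqLUS}) for the statement, but without the parameter assignments $q\to q^{1/2}$, $\theta=-1$, $t=0$ that the paper records, a reader would still need to do some work to see that the cited result yields exactly the formulas asserted here; the paper's precision on this point is worth imitating.
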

\begin{proof} This is \cite[Lemma~5.1]{uqLUS} with $q$ replaced by $q^{1/2}$, and the parameter assignments  $\theta=-1$ and $t=0$.
\end{proof}

\begin{definition}\rm (See \cite[Definition~2.2]{equit}.) By the {\it equitable presentation} of $U_{q^{1/2}}({\mathfrak{sl}_2})$, we mean the presentation given in Lemma \ref{lem:Eq}.
We call $\mathcal X$, $\mathcal Y$, $\mathcal Y^{-1}$, $\mathcal Z$ the {\it equitable generators} for $U_{q^{1/2}}({\mathfrak{sl}_2})$.
\end{definition}

\noindent Next, we interpret \eqref{eq:UqOne}, \eqref{eq:UqTwo} in terms of the equitable generators for  $U_{q^{1/2}} (\mathfrak{sl}_2)$.
\begin{theorem} \label{lem:U2module}  On the $U_{q^{1/2}} (\mathfrak{sl}_2)$-module $V$ from Lemma \ref{lem:Umodule}, the equitable generators $\mathcal X$, $\mathcal Y$, $\mathcal Y^{-1}$, $\mathcal Z$ 
act as follows:
\begin{align*} 
\begin{tabular}[t]{c|cccc}
{\rm generator $g$ }& $\mathcal X$ & $\mathcal Y$ & $\mathcal Y^{-1}$ & $\mathcal Z$
 \\
 \hline
 {\rm action of $g$ on $V$} & $(q-1)q^{-N/2} A^-$ & $q^{N/2}A^*$ & $q^{-N/2} (A^*)^{-1}$ & $(q-1)q^{-N/2} A^+$
    \end{tabular}
\end{align*}
\end{theorem}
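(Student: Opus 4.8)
The plan is to reduce the claim to the isomorphism of Lemma~\ref{lem:Eq} together with the explicit Chevalley-generator action recorded in Lemma~\ref{lem:Umodule}. The inverse isomorphism in Lemma~\ref{lem:Eq} tells us exactly how $\mathcal X$, $\mathcal Y^{\pm 1}$, $\mathcal Z$ are built from $K^{\pm 1}$, $E$, $F$: namely $\mathcal Y^{\pm 1}\mapsto K^{\pm 1}$, $\mathcal X\mapsto K^{-1}+K^{-1}E(q-1)$, and $\mathcal Z\mapsto K^{-1}-Fq^{-1/2}(q-1)$. So the first step is simply to substitute, on the module $V$, the actions from Lemma~\ref{lem:Umodule}: $K^{\pm 1}\mapsto q^{\pm N/2}(A^*)^{\pm 1}$, $E\mapsto L$, $F\mapsto q^{(1-N)/2}R$.

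Carrying this out: $\mathcal Y$ and $\mathcal Y^{-1}$ act as $q^{N/2}A^*$ and $q^{-N/2}(A^*)^{-1}$ respectively, which matches the claimed table entries immediately. For $\mathcal X$ we get
\begin{align*}
\mathcal X \mapsto q^{-N/2}(A^*)^{-1} + q^{-N/2}(A^*)^{-1} L\,(q-1) = (q-1)q^{-N/2}\Bigl(\frac{(A^*)^{-1}}{q-1} + (A^*)^{-1}L\Bigr) = (q-1)q^{-N/2}A^-,
\end{align*}
using Definition~\ref{def:Apm}. For $\mathcal Z$ we get
\begin{align*}
\mathcal Z \mapsto q^{-N/2}(A^*)^{-1} - q^{(1-N)/2}R\,q^{-1/2}(q-1) = (q-1)q^{-N/2}\Bigl(\frac{(A^*)^{-1}}{q-1} - R\Bigr) = (q-1)q^{-N/2}A^+,
\end{align*}
again by Definition~\ref{def:Apm}. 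This establishes the four table entries. Strictly, one should also remark that since Lemma~\ref{lem:Eq} is an algebra isomorphism and Lemma~\ref{lem:Umodule} already gives $V$ a genuine $U_{q^{1/2}}(\mathfrak{sl}_2)$-module structure, pushing forward along the isomorphism automatically yields a well-defined module structure whose equitable generators act as computed; there is nothing left to verify about the relations \eqref{eq:XYZ}. Alternatively, one can give a self-contained check that the four displayed operators satisfy \eqref{eq:XYZ}, which is exactly a rearrangement of \eqref{eq:UqOne} and \eqref{eq:UqTwo} from Lemma~\ref{lem:UqRel}: the relation $q\mathcal Y\mathcal Z-\mathcal Z\mathcal Y=q-1$ becomes $q A^*A^+ - A^+A^* = I$, the relation $q\mathcal X\mathcal Y-\mathcal Y\mathcal X=q-1$ becomes $qA^-A^* - A^*A^- = I$, and the relation $q\mathcal Z\mathcal X-\mathcal X\mathcal Z=q-1$ becomes $qA^+A^- - A^-A^+ = q^N I/(q-1)$ after accounting for the scalar $(q-1)q^{-N/2}$ appearing twice and the $q^{\pm N/2}$ from $\mathcal Y^{\pm 1}$.

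The proof is essentially bookkeeping, so there is no serious obstacle; the one place to be careful is tracking the powers of $q^{1/2}$ and the factors of $(q-1)$ so that the scalars in front of $A^\pm$ come out exactly as $(q-1)q^{-N/2}$, and making sure the $q^{(1-N)/2}$ in the action of $F$ combines correctly with the $q^{-1/2}$ in the isomorphism formula for $\mathcal Z$. I would present the argument as: invoke Lemma~\ref{lem:Eq} and Lemma~\ref{lem:Umodule}, then display the two short substitution computations for $\mathcal X$ and $\mathcal Z$ above, and conclude.
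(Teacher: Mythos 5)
Your proof is correct and follows the same route as the paper's: substitute the Chevalley actions from Lemma~\ref{lem:Umodule} into the isomorphism formulas of Lemma~\ref{lem:Eq}, then simplify via Definition~\ref{def:Apm} to recognize $(q-1)q^{-N/2}A^{\mp}$, and the scalars indeed combine exactly as you track them. One tiny terminological slip: the map sending $\mathcal X,\mathcal Y^{\pm1},\mathcal Z$ to expressions in $K^{\pm1},E,F$ is the isomorphism as labeled in Lemma~\ref{lem:Eq}, not the \emph{inverse} isomorphism (which goes the other direction); your computation uses the correct formulas regardless.
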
 
\begin{proof} Compare the two rows of the table, using Definition \ref{def:Apm} and Lemmas  \ref{lem:Umodule},  \ref{lem:Eq}.
\end{proof} 

\noindent We will use the following result about $U_{q^{1/2}} (\mathfrak{sl}_2)$.

\begin{lemma} \label{lem:UqNeed} {\rm (See \cite[Proposition~7.17]{bockting}.)} Pick $0 \not=a \in \mathbb C$ such that $a^2 \not=1$.
Define 
\begin{align}
\mathcal A = a^{-1} \mathcal X + a \mathcal Z. \label{eq:MCA}
\end{align}
Then
\begin{align}
& \mathcal Y^3 \mathcal A - (\beta+1) \mathcal Y^2 \mathcal A \mathcal Y + (\beta+1) \mathcal Y \mathcal A \mathcal Y^2 - \mathcal A \mathcal Y^3 =0, \label{eq:XXX} \\
&\frac{\mathcal A^3 \mathcal Y - (\beta+1) \mathcal A^2 \mathcal Y \mathcal A + (\beta+1) \mathcal A \mathcal Y \mathcal A^2 - \mathcal Y \mathcal A^3}{(q-q^{-1})^2} = \mathcal Y \mathcal A - \mathcal A \mathcal Y, \label{eq:YYY}
\end{align}
where $\beta = q + q^{-1}$.
\end{lemma}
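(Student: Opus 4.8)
The plan is to verify the two displayed identities \eqref{eq:XXX} and \eqref{eq:YYY} by a direct computation inside $U_{q^{1/2}}(\mathfrak{sl}_2)$, working entirely with the equitable generators $\mathcal X$, $\mathcal Y$, $\mathcal Y^{-1}$, $\mathcal Z$ and the defining relations \eqref{eq:XYZ}. The starting point is the observation that the equitable relations are completely symmetric under the cyclic rotation $\mathcal X \mapsto \mathcal Y \mapsto \mathcal Z \mapsto \mathcal X$; in particular, each of the three generators satisfies, together with $\mathcal Y$, a normalized $q$-Weyl-type relation. First I would rewrite the relations \eqref{eq:XYZ} in the form $\mathcal Y \mathcal X = q^{-1}\mathcal X \mathcal Y + (1-q^{-1})$ and $\mathcal Y \mathcal Z = q \mathcal Z \mathcal Y - (q-1)$, i.e. as commutation rules that let one push every $\mathcal Y$ to the right past an $\mathcal X$ (picking up a factor $q^{-1}$ plus an affine correction) or past a $\mathcal Z$ (picking up a factor $q$ plus an affine correction).

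The key step is then an induction on the power $n$ of $\mathcal Y$: one shows $\mathcal Y^n \mathcal X = q^{-n}\mathcal X \mathcal Y^n + (1-q^{-n})\mathcal Y^{n-1}\cdot(\text{something})$ — more precisely $\mathcal Y^n \mathcal X = q^{-n}\mathcal X\mathcal Y^n + [n]_{q^{-1}}'\,\mathcal Y^{n-1}$ for an appropriate scalar — and similarly $\mathcal Y^n \mathcal Z = q^{n}\mathcal Z\mathcal Y^n - (\text{scalar})\,\mathcal Y^{n-1}$. Feeding $n=1,2,3$ into the left-hand side of \eqref{eq:XXX} with $\mathcal A$ replaced first by $\mathcal X$ alone and then by $\mathcal Z$ alone, the four terms $\mathcal Y^3\mathcal A$, $-(\beta+1)\mathcal Y^2\mathcal A\mathcal Y$, $(\beta+1)\mathcal Y\mathcal A\mathcal Y^2$, $-\mathcal A\mathcal Y^3$ collapse: after moving all $\mathcal Y$'s to the right one is left with a polynomial in $\mathcal Y$ times $\mathcal X$ (resp. $\mathcal Z$) plus lower-order corrections, and the scalar coefficient of $\mathcal X\mathcal Y^3$ is $1 - (\beta+1)q^{-1} + (\beta+1)q^{-2} - q^{-3}$, which factors as $(1-q^{-1})(1 - q\cdot q^{-1})(1 - q^{-1}\cdot q^{-1})$ and vanishes because of the middle factor $1-1=0$; the correction terms are handled by the same factorization applied with shifted exponents. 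This is exactly the computation that underlies Proposition~\ref{prop:qSerre} in the $A^*$-version, and \eqref{eq:XXX} follows because \eqref{eq:MCA} is linear in $\mathcal A$. For \eqref{eq:YYY} one performs the mirror-image computation: expand $\mathcal A = a^{-1}\mathcal X + a\mathcal Z$, use the $q$-Weyl relations to move $\mathcal Y$'s, and check that all terms not proportional to $\mathcal Y\mathcal A - \mathcal A\mathcal Y$ cancel while the surviving coefficient equals $(q-q^{-1})^2$; the cross terms between $\mathcal X$ and $\mathcal Z$ contribute the constant $q^N/(q-1)$-type term in the concrete module but here produce only scalars times $\mathcal Y$, which are absorbed. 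The hypothesis $a^2 \neq 1$ guarantees that $\mathcal X$ and $\mathcal Z$ genuinely both appear with nonzero independent coefficients, so no degenerate cancellation destroys the argument.

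The main obstacle I anticipate is purely bookkeeping: keeping track of the affine (non-leading) correction terms produced each time a $\mathcal Y$ is commuted past an $\mathcal X$ or $\mathcal Z$, since \eqref{eq:YYY} is an inhomogeneous relation whose right-hand side is itself one of those lower-order terms, so one must show the corrections assemble precisely into $(q-q^{-1})^2(\mathcal Y\mathcal A-\mathcal A\mathcal Y)$ and nothing else. A clean way to organize this, which I would adopt, is to cite \cite[Proposition~7.17]{bockting} directly — the statement is quoted verbatim in Lemma~\ref{lem:UqNeed} — so that the "proof" reduces to observing that our $\mathcal A$ in \eqref{eq:MCA} has exactly the form required there and invoking that reference; alternatively, if a self-contained argument is wanted, one records the two auxiliary identities $\mathcal Y^n\mathcal X = q^{-n}\mathcal X\mathcal Y^n + \tfrac{q^{-n}-1}{q^{-1}-1}(q^{-1}-1)\mathcal Y^{n-1}\cdot\frac{1}{?}$ (stated correctly with the right scalar) and their $\mathcal Z$-counterpart as a preliminary lemma, and then the two main identities are immediate substitutions. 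I would present the reference-based proof as the primary one and relegate the explicit commutation computation to a remark.
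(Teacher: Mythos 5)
Your primary route---citing \cite[Proposition~7.17]{bockting} and observing that \eqref{eq:MCA} has exactly the form required there---is precisely what the paper does: the lemma carries that citation in its statement and no proof is offered beyond it, so on the main point you agree with the paper. If the self-contained sketch is meant to be a usable fallback, though, note that your commutation rules have $\mathcal X$ and $\mathcal Z$ swapped: from \eqref{eq:XYZ} one gets $\mathcal Y\mathcal X = q\,\mathcal X\mathcal Y - (q-1)$ and $\mathcal Y\mathcal Z = q^{-1}\mathcal Z\mathcal Y + (1-q^{-1})$, not the other way around; and the displayed ``factorization'' with a factor $(1-q\,q^{-1})=0$ is not a factorization but simply a restatement that the coefficient vanishes. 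With the corrected rules the coefficient of $\mathcal X\mathcal Y^3$ is $q^3 - (\beta+1)q^2 + (\beta+1)q - 1 = (q-1)\bigl(q^2+q+1-(\beta+1)q\bigr)=0$, and the lower-order $\mathcal Y^2$ contribution vanishes similarly, so \eqref{eq:XXX} does follow for $\mathcal X$ and (by the analogous computation) for $\mathcal Z$. The sketch for \eqref{eq:YYY} is too thin to stand alone---expanding the cube of $a^{-1}\mathcal X + a\mathcal Z$ produces mixed $\mathcal X$--$\mathcal Z$ terms that the naive ``push $\mathcal Y$ past one generator'' strategy does not handle, and this is exactly where the hypothesis $a^2\neq 1$ enters in \cite{bockting}---but since, as you say, the citation is the actual proof, none of this undermines your answer.
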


\noindent Recall the scalar $\varphi$ and the corresponding matrix $A$ from Definition \ref{def:weightedA}.
\begin{lemma}\label{lem:avphi}  Referring to Lemma  \ref{lem:UqNeed}, let 
\begin{align}
a = {\bf i} \varphi^{-1/2}, \qquad \qquad {\bf i}^2 =-1 \label{eq:defa}
\end{align}
\noindent  and note that $a^2 =-\varphi^{-1} \not=1$. Then on the $U_{q^{1/2}} (\mathfrak{sl}_2)$-module $V$ from Lemma \ref{lem:Umodule},
\begin{align}
\mathcal A = {\bf i} (1-q) q^{-N/2} \varphi^{-1/2} A. \label{eq:AA}
\end{align}
\end{lemma}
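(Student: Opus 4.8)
The plan is to verify the identity $\mathcal A = {\bf i}(1-q)q^{-N/2}\varphi^{-1/2}A$ by direct computation, combining the defining formula $\mathcal A = a^{-1}\mathcal X + a\mathcal Z$ from \eqref{eq:MCA} with the explicit actions of $\mathcal X$ and $\mathcal Z$ on the standard module $V$ recorded in Theorem~\ref{lem:U2module}. First I would substitute $a = {\bf i}\varphi^{-1/2}$, so that $a^{-1} = -{\bf i}\varphi^{1/2}$ (using ${\bf i}^{-1}=-{\bf i}$), and note as stated that $a^2 = -\varphi^{-1}\neq 1$, which is exactly the nondegeneracy hypothesis needed to invoke Lemma~\ref{lem:UqNeed}.

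Next, from Theorem~\ref{lem:U2module}, on $V$ we have $\mathcal X = (q-1)q^{-N/2}A^-$ and $\mathcal Z = (q-1)q^{-N/2}A^+$. Therefore
\begin{align*}
\mathcal A = a^{-1}\mathcal X + a\mathcal Z = (q-1)q^{-N/2}\bigl(a^{-1}A^- + a A^+\bigr) = (q-1)q^{-N/2}\bigl(-{\bf i}\varphi^{1/2}A^- + {\bf i}\varphi^{-1/2}A^+\bigr).
\end{align*}
Factoring out ${\bf i}\varphi^{-1/2}$ from the parenthesized expression gives $-{\bf i}\varphi^{1/2}A^- + {\bf i}\varphi^{-1/2}A^+ = {\bf i}\varphi^{-1/2}\bigl(-\varphi A^- + A^+\bigr) = -{\bf i}\varphi^{-1/2}(\varphi A^- - A^+)$. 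By \eqref{eq:AAA} of Lemma~\ref{lem:UqRel}, $\varphi A^- - A^+ = A$, so the parenthesized expression equals $-{\bf i}\varphi^{-1/2}A$. Substituting back,
\begin{align*}
\mathcal A = (q-1)q^{-N/2}\cdot\bigl(-{\bf i}\varphi^{-1/2}A\bigr) = {\bf i}(1-q)q^{-N/2}\varphi^{-1/2}A,
\end{align*}
which is \eqref{eq:AA}.

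There is really no hard part here: the statement is an assembly of already-established facts, and the only thing requiring care is the bookkeeping of the factor ${\bf i}$ and its inverse, together with the sign flip when pulling $\varphi^{1/2}$ inside as $\varphi^{-1/2}\cdot\varphi$. The one genuine check is that $a^{-1} = -{\bf i}\varphi^{1/2}$ matches the coefficient of $A^-$ correctly so that \eqref{eq:AAA} can be applied verbatim; I would write this out explicitly rather than leaving it implicit, since the whole point of choosing this particular $a$ is to make $A^-$ and $A^+$ recombine into $A$. I would also remark that this identity is the reason Lemma~\ref{lem:UqNeed} will yield Propositions~\ref{prop:qSerre} and~\ref{thm:TDrel} for $A, A^*$: equations \eqref{eq:XXX}, \eqref{eq:YYY} applied to $\mathcal A$ and $\mathcal Y = q^{N/2}A^*$ translate, after clearing the scalar factors, into \eqref{eq:Serre} and \eqref{eq:TDrel}.
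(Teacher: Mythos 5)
Your proof is correct and follows exactly the route the paper indicates: substitute the actions of $\mathcal X$, $\mathcal Z$ from Theorem~\ref{lem:U2module} into $\mathcal A = a^{-1}\mathcal X + a\mathcal Z$ with $a = {\bf i}\varphi^{-1/2}$, then recombine via $A = \varphi A^- - A^+$ from \eqref{eq:AAA}. The bookkeeping of $a^{-1} = -{\bf i}\varphi^{1/2}$ and the extraction of the factor $-{\bf i}\varphi^{-1/2}$ are carried out correctly, and the closing remark about how \eqref{eq:XXX}, \eqref{eq:YYY} yield Propositions~\ref{prop:qSerre}, \ref{thm:TDrel} matches the paper's own commentary following the lemma.
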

\begin{proof} We verify \eqref{eq:AA} as follows. Evaluate the left-hand side of \eqref{eq:AA} using  \eqref{eq:MCA}, \eqref{eq:defa} and Theorem \ref{lem:U2module}. Evaluate
the right-hand side of \eqref{eq:AA} using \eqref{eq:AAA}.
\end{proof}

\noindent Next, we obtain Propositions \ref{prop:qSerre}, \ref{thm:TDrel}  from Lemma \ref{lem:UqNeed} with $a={\bf i} \varphi^{-1/2}$.
To obtain Proposition \ref{prop:qSerre}, evaluate \eqref{eq:XXX} using $\mathcal Y=q^{N/2} A^*$ and \eqref{eq:AA}.
To obtain  Proposition \ref{thm:TDrel}, evaluate \eqref{eq:YYY} using $\mathcal Y=q^{N/2} A^*$ and \eqref{eq:AA}.

\section{$L_N(q)$ and Leonard systems}

\noindent  We continue to discuss the poset $L_N(q)$. As we computed the eigenvalues of $A$ in Proposition  \ref{prop:Weigval}, we encountered the
notion of a Leonard system. In this section, we explain in more detail what Leonard systems have to do with $L_N(q)$. Our explanation
begins with the concept of a Leonard pair.
\medskip

\noindent The concept of a Leonard pair was introduced in \cite{LS99}. Roughly speaking, a Leonard pair consists of two
multiplicity-free linear maps on a nonzero finite-dimensional vector space, that each act on an eigenbasis for the other one in an irreducible
tridiagonal fashion \cite[Definition~1.1]{LS99}. An introductory survey about Leonard pairs can be found in \cite{LSintro}.
To investigate  a  Leonard pair, it is helpful to bring in a related object called a Leonard system.
Roughly speaking, a Leonard system consists of a Leonard pair, together with appropriate orderings of their primitive idempotents \cite[Definition~1.4]{LS99}.
The Leonard systems
are classified up to isomorphism \cite[Theorem~1.9]{LS99}, \cite[Section~5]{TLT:array}. See \cite{LSnotes} for a modern treatment of the classification.
Some notable papers about Leonard pairs and systems are \cite{nomKraw, qSerre, LS24,qrac, vidunas}.
\medskip

\noindent Recall the standard $T$-module $V$ for $L_N(q)$. Let $W$ denote an irreducible $T$-submodule of $V$, and let $r$ denote the endpoint of $W$.
We will show
that $A, A^*$ act on  $W$ as a Leonard pair. To do this, we show that the sequence
 $(A, \lbrace E_i \rbrace_{i=r}^{N-r}; A^*; \lbrace E_i^*\rbrace_{i=r}^{N-r})$ acts on 
$W$ as a Leonard system. 
By Lemma \ref{lem:Wrep}
and Proposition \ref{prop:Weigval},
each of $A, A^*$ is multiplicity-free on $W$. For these actions, the eigenspaces  are described as follows. 
By Proposition \ref{prop:Weigval} we find that for $0 \leq i \leq N$,
$E_iW \not=0$ if and only if $r \leq i \leq N-r$, and ${\rm dim}\, E_iW=1$ for $r \leq i \leq N-r$.
 By Lemma \ref{lem:Wrep} we find that for $0 \leq i \leq N$,
$E^*_iW \not=0$ if and only if $r \leq i \leq N-r$, and ${\rm dim}\, E^*_iW=1$ for $r \leq i \leq N-r$.
By Lemma \ref{lem:triples} and since the $T$-submodule $W$ is irreducible,  the following holds on $W$:
\begin{align*}
E^*_i A E^*_j = \begin{cases} 0, & \mbox{if $\vert i-j\vert >1$}; \\
                                            \not=0, & \mbox{if $\vert i-j\vert=1$}.
                         \end{cases} \qquad \qquad (r \leq i,j\leq N-r).
\end{align*}
By   \eqref{eq:EAsE} and since the $T$-submodule $W$ is irreducible, the following holds on $W$:
\begin{align*}
E_i A^* E_j = \begin{cases} 0, & \mbox{if $\vert i-j\vert >1$}; \\
                                            \not=0, & \mbox{if $\vert i-j\vert=1$}.
                         \end{cases} \qquad \qquad (r \leq i,j\leq N-r).
\end{align*}
By these comments and \cite[Definition~1.4]{LS99},
the sequence $(A, \lbrace E_i \rbrace_{i=r}^{N-r}; A^*; \lbrace E_i^*\rbrace_{i=r}^{N-r})$ acts on 
$W$ as a Leonard system. Denote this Leonard system by $\Phi$.
 In \cite[Example~20.7]{LSnotes} there is a description of the Leonard systems that have dual $q$-Krawtchouk type. This description 
 involves six parameters that we will discuss in a moment.
 From the form of the matrices that represent $A$ in Proposition \ref{thm:Wrep}
 and $A^*$ in Lemma \ref{lem:Wrep}, and from the formula for $\lbrace \theta_i \rbrace_{i=r}^{N-r}$ in
 Definition \ref{def:theta},
 we see that the
Leonard system $\Phi$ has dual $q$-Krawtchouk type with the following parameters:
\begin{align*}
& d(\Phi)=N-2r, \qquad \qquad h(\Phi) = \frac{\varphi q^{N-r}}{q-1}, \qquad \qquad  h^*(\Phi)=q^{-r},
\\
& s(\Phi) = - \varphi^{-1} q^{2r-N-1}, 
\qquad \quad  \theta_0(\Phi) = \frac{\varphi q^{N-r}-q^r}{q-1},   \qquad \quad \theta^*_0(\Phi) = q^{-r}.
\end{align*}
\noindent See \cite{nearBip, Atten,  boyd} for more information about the Leonard systems of dual $q$-Krawtchouk type.

\section{The split decompositions of the standard module $V$}

\noindent We continue to discuss the poset $L_N(q)$. In order to motivate this section, we briefly consider a $Q$-polynomial distance-regular graph $\Gamma$ \cite{bbit, bannai,bcn,dkt}.
In \cite[Definition~5.1]{ds} we introduced the notion of a split decomposition for the standard module of $\Gamma$. For each vertex $x$ of $\Gamma$ there are
four split decompositions, see \cite[Definition~3.1]{kim1}, \cite[Definition~4.1]{kim2}, \cite[Definition~10.1]{qtet}. These four split decompositions are obtained using
the adjacency matrix of $\Gamma$ and the dual adjacency matrix of $\Gamma$ with respect to $x$. Referring to $L_N(q)$, 
in this section we use $A$ and $A^*$ to construct four analogous decompositions of the standard module $V$.
These decompositions are described in Lemma \ref{lem:Uds} and Theorem  \ref{lem:ECE} below.
\medskip

\noindent   Our construction is outlined as follows.
Recall the parameter $\varphi$ and the corresponding matrix $A$ from Definition \ref{def:weightedA}.
Recall the matrices $A^+, A^-$ from Definition \ref{def:Apm}. We will introduce a variation on $A^+, A^-$ called $A_+, A_-$. We will show that $A_+, A_-,A^*$ are related in
the same way that $A^+, A^-, A^*$ are related. We will use $A_+, A_-, A^*$ to obtain a $U_{q^{1/2}}(\mathfrak{sl}_2)$-module structure on $V$ that is slightly different from the one in Section 13.
Using the two $U_{q^{1/2}}(\mathfrak{sl}_2)$ actions on $V$, we will complete the following tasks.
 We will show that
 each of  $A^+, A^-, A_+, A_-$ is diagonalizable, with eigenvalues $q^i/(q-1)$ $(0 \leq i \leq N)$.
 We will display four bases for $V$, said to be split. We will show that each split basis diagonalizes one of $A^+, A^-, A_+, A_-$.
 We will give the action of $A, A^*$ on each split basis. We will show that on each split basis, one of $A, A^*$ acts in an upper triangular fashion
 and the other one acts in a lower triangular fashion.
 Using these actions, we will show that the eigenspace decompositions of $A^+, A^-, A_+, A_-$ satisfy  Lemma \ref{lem:Uds} and Theorem  \ref{lem:ECE}.

\begin{definition} \label{def:Apm2} Define
\begin{align*}
A_+ =  \frac{(A^*)^{-1}}{q-1}+\varphi^{-1} R, \qquad \qquad
A_- =  \frac{(A^*)^{-1}}{q-1} -\varphi (A^*)^{-1} L.
\end{align*}
\end{definition}
\begin{lemma}\label{lem:UqRel2} We have
\begin{align}
A =  \varphi A_+- A_-. \label{eq:AAA2}
\end{align}
Moreover
\begin{align}
&q A_-  A^* - A^* A_- = I, \qquad \qquad  q A^* A_+  -A_+ A^*= I, \label{eq:UqOne2}\\
&   q A_+ A_-   -A_- A_+ = \frac{q^N I}{q-1}. \label{eq:UqTwo2}
\end{align}
\end{lemma}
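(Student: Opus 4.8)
The plan is to prove Lemma~\ref{lem:UqRel2} in exact parallel with the proof of Lemma~\ref{lem:UqRel}, since the relations to be established have the same shape and the only input available is Lemma~\ref{lem:AsRL} together with the definitions. First I would verify \eqref{eq:AAA2}: substitute the definitions of $A_+, A_-$ from Definition~\ref{def:Apm2} into $\varphi A_+ - A_-$, obtaining
\begin{align*}
\varphi A_+ - A_- = \varphi\frac{(A^*)^{-1}}{q-1} + R - \frac{(A^*)^{-1}}{q-1} + \varphi (A^*)^{-1}L = R + \varphi (A^*)^{-1}L + \frac{\varphi-1}{q-1}(A^*)^{-1},
\end{align*}
which is exactly the expression for $A$ in Lemma~\ref{lem:Ashape}.

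Next I would establish \eqref{eq:UqOne2}. For the first relation, expand $qA_-A^* - A^*A_-$ using Definition~\ref{def:Apm2}. The $(A^*)^{-1}/(q-1)$ terms contribute $q\frac{(A^*)^{-1}A^*}{q-1} - \frac{A^*(A^*)^{-1}}{q-1} = \frac{q-1}{q-1}I = I$, and the remaining terms give $-\varphi\bigl(q(A^*)^{-1}LA^* - A^*(A^*)^{-1}L\bigr) = -\varphi\bigl(q(A^*)^{-1}LA^* - L\bigr)$. Using $A^*L = qLA^*$ from Lemma~\ref{lem:AsRL}, we get $LA^* = q^{-1}A^*L$, so $q(A^*)^{-1}LA^* = q(A^*)^{-1}q^{-1}A^*L = L$, and this term vanishes, leaving $I$. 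The second relation $qA^*A_+ - A_+A^* = I$ is handled identically, using $A^*R = q^{-1}RA^*$ to cancel the $R$-terms.

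The main work is \eqref{eq:UqTwo2}. I would expand $qA_+A_- - A_-A_+$ using Definition~\ref{def:Apm2}, producing several pieces: the $\frac{(A^*)^{-1}}{q-1}$-with-itself piece gives $q\frac{(A^*)^{-2}}{(q-1)^2} - \frac{(A^*)^{-2}}{(q-1)^2} = \frac{(A^*)^{-2}}{q-1}$; the cross terms involving $R$ and $(A^*)^{-1}L$ simplify using $A^*R = q^{-1}RA^*$ and $A^*L = qLA^*$; and the $R$-with-$(A^*)^{-1}L$ pieces produce the commutator $LR - RL$, which by Lemma~\ref{lem:AsRL} equals $\frac{q^NA^* - (A^*)^{-1}}{q-1}$. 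After collecting terms, the $(A^*)^{-2}$ and $(A^*)^{-1}$ contributions should cancel and one is left with $\frac{q^NI}{q-1}$. The bookkeeping of the $q$-powers in the cross terms is the only place where an error could creep in, so I would be careful to commute all $A^*$'s past $R$ and $L$ consistently before combining.

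Thus the proof reduces to: verify \eqref{eq:AAA2} via Lemma~\ref{lem:Ashape}; verify \eqref{eq:UqOne2} and \eqref{eq:UqTwo2} by eliminating $A_+, A_-$ using Definition~\ref{def:Apm2} and evaluating with Lemma~\ref{lem:AsRL}. I expect the main obstacle to be purely computational --- keeping track of the scalar $q$-powers and the $\varphi$, $\varphi^{-1}$ factors in the commutator $qA_+A_- - A_-A_+$ --- rather than conceptual; indeed, one could also deduce \eqref{eq:UqOne2}, \eqref{eq:UqTwo2} from \eqref{eq:UqOne}, \eqref{eq:UqTwo} by noting the substitution $R \mapsto \varphi^{-1}R$, $L \mapsto \varphi L$ and $(A^*)^{-1}L \mapsto \varphi(A^*)^{-1}L$ sends $A^+ \mapsto$ (a $\varphi$-scaled analog) and respects the relations, but the direct verification is cleanest to present.
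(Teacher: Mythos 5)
Your proof is correct and follows the same strategy the paper uses: verify \eqref{eq:AAA2} by substituting Definition~\ref{def:Apm2} into Lemma~\ref{lem:Ashape}, and verify \eqref{eq:UqOne2}, \eqref{eq:UqTwo2} by eliminating $A_+, A_-$ via Definition~\ref{def:Apm2} and invoking the three relations of Lemma~\ref{lem:AsRL}. The intermediate computations you sketch (including the commutations $L A^* = q^{-1} A^* L$, $R(A^*)^{-1} = q^{-1}(A^*)^{-1}R$, and the reduction of the $R$-$L$ cross term to $(A^*)^{-1}(LR-RL)$) all check out.
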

\begin{proof} To verify  \eqref{eq:AAA2}, eliminate $A, A_+, A_-$ using Lemma \ref{lem:Ashape} and Definition \ref{def:Apm2}.
To verify \eqref{eq:UqOne2} and \eqref{eq:UqTwo2}, eliminate $A_+, A_-$ using Definition \ref{def:Apm2} and evaluate the result using
Lemma \ref{lem:AsRL}.
\end{proof}

\begin{theorem} \label{lem:U2module2} The standard module $V$ becomes a $U_{q^{1/2}} (\mathfrak{sl}_2)$-module on which the equitable generators $\mathcal X$, $\mathcal Y$, $\mathcal Y^{-1}$, $\mathcal Z$ 
act as follows:
\begin{align*} 
\begin{tabular}[t]{c|cccc}
{\rm generator $g$ }& $\mathcal X$ & $\mathcal Y$ & $\mathcal Y^{-1}$ & $\mathcal Z$
 \\
 \hline
 {\rm action of $g$ on $V$} & $(q-1)q^{-N/2} A_-$ & $q^{N/2}A^*$ & $q^{-N/2} (A^*)^{-1}$ & $(q-1)q^{-N/2} A_+$
    \end{tabular}
\end{align*}
\end{theorem}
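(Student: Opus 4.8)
\textbf{Proof plan for Theorem \ref{lem:U2module2}.}
The plan is to deduce this module structure directly from Lemma \ref{lem:Eq}, which gives the isomorphism between the equitable and Chevalley presentations of $U_{q^{1/2}}(\mathfrak{sl}_2)$, together with the relations in Lemma \ref{lem:UqRel2}. First I would recall from Lemma \ref{lem:Eq} that to give $V$ a $U_{q^{1/2}}(\mathfrak{sl}_2)$-module structure in the equitable presentation, it suffices to exhibit operators $\mathsf{X},\mathsf{Y},\mathsf{Y}^{-1},\mathsf{Z}$ on $V$ satisfying $\mathsf{Y}\mathsf{Y}^{-1}=\mathsf{Y}^{-1}\mathsf{Y}=I$ along with the three defining relations \eqref{eq:XYZ}. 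I would set
\begin{align*}
\mathsf{X}=(q-1)q^{-N/2}A_-, \qquad \mathsf{Y}=q^{N/2}A^*, \qquad \mathsf{Y}^{-1}=q^{-N/2}(A^*)^{-1}, \qquad \mathsf{Z}=(q-1)q^{-N/2}A_+,
\end{align*}
and then verify the four relations one by one.

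The relation $\mathsf{Y}\mathsf{Y}^{-1}=\mathsf{Y}^{-1}\mathsf{Y}=I$ is immediate since $A^*$ is invertible (comment below Definition \ref{def:As}). For the relation $\frac{q\mathsf{X}\mathsf{Y}-\mathsf{Y}\mathsf{X}}{q-1}=1$, I would substitute the above assignments and reduce it to $qA_-A^*-A^*A_-=I$, which is the first equation of \eqref{eq:UqOne2}; here the scalar $(q-1)q^{-N/2}\cdot q^{N/2}=(q-1)$ in the numerator cancels the $q-1$ in the denominator. Similarly, the relation $\frac{q\mathsf{Y}\mathsf{Z}-\mathsf{Z}\mathsf{Y}}{q-1}=1$ reduces to $qA^*A_+-A_+A^*=I$, the second equation of \eqref{eq:UqOne2}. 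The last relation $\frac{q\mathsf{Z}\mathsf{X}-\mathsf{X}\mathsf{Z}}{q-1}=1$ reduces to a scalar multiple of \eqref{eq:UqTwo2}: expanding gives $(q-1)^2q^{-N}(qA_+A_--A_-A_+)=(q-1)I$, i.e. $qA_+A_--A_-A_+=\frac{q^NI}{q-1}$, which is exactly \eqref{eq:UqTwo2}.

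Since all three defining relations hold, $V$ is a $U_{q^{1/2}}(\mathfrak{sl}_2)$-module in the equitable presentation with the equitable generators $\mathcal X,\mathcal Y,\mathcal Y^{-1},\mathcal Z$ acting as in the table, which proves the theorem. I anticipate no real obstacle here: the entire proof is a routine bookkeeping check analogous to the proof of Theorem \ref{lem:U2module}, the only mild point of care being the tracking of the $q^{\pm N/2}$ and $(q-1)^{\pm 1}$ prefactors so that each relation collapses precisely to one of the three identities in Lemma \ref{lem:UqRel2}. Indeed, the cleanest way to write this up is simply: "Compare the two rows of the table, using Definition \ref{def:Apm2} and Lemmas \ref{lem:Eq}, \ref{lem:UqRel2}," mirroring the proof of Theorem \ref{lem:U2module}.
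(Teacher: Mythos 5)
Your proof is correct and is essentially the same as the paper's: the paper's proof simply says "Compare the relations \eqref{eq:XYZ} with the relations \eqref{eq:UqOne2}, \eqref{eq:UqTwo2}," which is exactly the bookkeeping verification you carry out in detail. Your scalar checks (the $(q-1)$ and $q^{\pm N/2}$ prefactors cancelling to reduce each defining relation to one of the identities of Lemma \ref{lem:UqRel2}) are accurate.
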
 
\begin{proof} Compare the relations \eqref{eq:XYZ} with the relations \eqref{eq:UqOne2}, \eqref{eq:UqTwo2}.
\end{proof}

\noindent Consider the equitable generators $\mathcal X$, $\mathcal Y$,  $\mathcal Z$ of $U_{q^{1/2}} (\mathfrak{sl}_2)$. Rearranging the relations \eqref{eq:XYZ}, we obtain 
\begin{align*}
q(1-\mathcal X \mathcal Y)=1-\mathcal Y \mathcal X, \qquad \quad 
q (1-\mathcal Y \mathcal Z)= 1-\mathcal Z\mathcal Y, \qquad \quad q(1-\mathcal Z \mathcal X)=1-\mathcal  X \mathcal Z.
\end{align*}
Following \cite[Definition~5.2]{equit} we define
\begin{align*} 
  n_x &= \frac{q(1-\mathcal Y \mathcal Z)}{q-1} = \frac{1-\mathcal Z \mathcal Y}{q-1}, \\
    n_y &= \frac{q(1-\mathcal Z \mathcal X)}{q-1} = \frac{1-\mathcal X \mathcal Z}{q-1}, \\
      n_z &= \frac{q(1-\mathcal X \mathcal Y)}{q-1} = \frac{1-\mathcal Y \mathcal X}{q-1}.
\end{align*}
\noindent We are mainly interested in $n_x, n_z$.

\begin{lemma} \label{lem:nxnz} On the $U_{q^{1/2}} (\mathfrak{sl}_2)$-module $V$ from  Lemma \ref{lem:Umodule}, we have
\begin{align*}
n_x = RA^* ,   \qquad \qquad n_z =-L. 
\end{align*}
\end{lemma}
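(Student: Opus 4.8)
The plan is to unwind the definitions of $n_x$ and $n_z$ using the action of the equitable generators on $V$ given in Lemma \ref{lem:Umodule}, together with the isomorphism between the equitable and Chevalley presentations in Lemma \ref{lem:Eq}. First I would recall that on the module $V$ the Chevalley generators act as $K \mapsto q^{N/2}A^*$, $K^{-1}\mapsto q^{-N/2}(A^*)^{-1}$, $E\mapsto L$, $F\mapsto q^{(1-N)/2}R$. Via the formulas in Lemma \ref{lem:Eq}, the equitable generator $\mathcal Z$ maps to $K^{-1}-Fq^{-1/2}(q-1)$, which on $V$ becomes $q^{-N/2}(A^*)^{-1}-q^{(1-N)/2}q^{-1/2}(q-1)R = q^{-N/2}\bigl((A^*)^{-1}-(q-1)R\bigr)$, while $\mathcal Y$ maps to $K$, i.e. $q^{N/2}A^*$, and $\mathcal X$ maps to $K^{-1}+K^{-1}E(q-1)$, i.e. $q^{-N/2}\bigl((A^*)^{-1}+(A^*)^{-1}L(q-1)\bigr)$.

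Next I would compute $n_z = (1-\mathcal Y\mathcal X)/(q-1)$ on $V$. We have $\mathcal Y\mathcal X \mapsto q^{N/2}A^*\cdot q^{-N/2}\bigl((A^*)^{-1}+(A^*)^{-1}L(q-1)\bigr) = A^*(A^*)^{-1}+A^*(A^*)^{-1}L(q-1) = I + (q-1)L$. Hence $1-\mathcal Y\mathcal X \mapsto -(q-1)L$, so $n_z \mapsto -L$, as claimed. For $n_x$ I would use the form $n_x = (1-\mathcal Z\mathcal Y)/(q-1)$. Here $\mathcal Z\mathcal Y \mapsto q^{-N/2}\bigl((A^*)^{-1}-(q-1)R\bigr)\cdot q^{N/2}A^* = (A^*)^{-1}A^* - (q-1)RA^* = I - (q-1)RA^*$. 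Therefore $1-\mathcal Z\mathcal Y \mapsto (q-1)RA^*$, giving $n_x \mapsto RA^*$, as desired.

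It would be worth noting that the two expressions for each of $n_x,n_z$ in \cite[Definition~5.2]{equit} agree precisely because the equitable relations \eqref{eq:XYZ} hold; on $V$ these are exactly \eqref{eq:UqOne}, \eqref{eq:UqTwo} transported via Theorem \ref{lem:U2module} (or directly Lemma \ref{lem:AsRL} via Lemma \ref{lem:Umodule}), so there is no ambiguity in the computation and one may use whichever of the two forms is more convenient, as I did above. I do not anticipate a genuine obstacle here: the proof is a direct substitution. The only point requiring a little care is bookkeeping the powers of $q^{\pm N/2}$ and $q^{\pm 1/2}$ so that they cancel correctly, and making sure to pick, for each of $n_x$ and $n_z$, the one of the two defining expressions in which the factors $K$ and $K^{-1}$ sit adjacent to each other so that $KK^{-1}=1$ simplifies things immediately; choosing the other expression would force an extra application of a commutation relation from Lemma \ref{lem:AsRL} but still yields the same answer.
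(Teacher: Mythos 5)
Your proof is correct and follows essentially the same route as the paper: you use the two forms $n_x=(1-\mathcal Z\mathcal Y)(q-1)^{-1}$ and $n_z=(1-\mathcal Y\mathcal X)(q-1)^{-1}$, substitute the equitable generator actions, and exploit $KK^{-1}=1$ for immediate cancellation. The only cosmetic difference is that the paper cites Theorem \ref{lem:U2module} (which packages the equitable actions in terms of $A^\pm,A^*$) and then expands via Definition \ref{def:Apm}, whereas you rederive those same equitable actions from the Chevalley actions in Lemma \ref{lem:Umodule} via Lemma \ref{lem:Eq}; the computations coincide.
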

\begin{proof} To verify $n_x = RA^*$, in $n_x = (1-\mathcal Z \mathcal Y)(q-1)^{-1}$ eliminate $\mathcal Y,\mathcal Z$ using
Theorem \ref{lem:U2module}, and evaluate the result using Definition  \ref{def:Apm}. 
To verify $n_z = -L$, in $n_z = (1-\mathcal Y \mathcal X)(q-1)^{-1}$ eliminate $\mathcal X,\mathcal Y$ using
Theorem \ref{lem:U2module}, and evaluate the result using Definition  \ref{def:Apm}.
\end{proof}

\begin{lemma} \label{lem:nxnz2} On the $U_{q^{1/2}} (\mathfrak{sl}_2)$-module $V$ from Theorem \ref{lem:U2module2}, we have
\begin{align*}
n_x = -\varphi^{-1} RA^* ,   \qquad \qquad n_z =\varphi L. 
\end{align*}
\end{lemma}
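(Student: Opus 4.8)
The plan is to mimic exactly the proof of Lemma \ref{lem:nxnz}, but using the $U_{q^{1/2}}(\mathfrak{sl}_2)$-module structure of Theorem \ref{lem:U2module2} in place of the one in Theorem \ref{lem:U2module}. Recall that $n_x = (1-\mathcal Z\mathcal Y)(q-1)^{-1}$ and $n_z = (1-\mathcal Y\mathcal X)(q-1)^{-1}$. First I would substitute into $n_x$ the actions $\mathcal Y = q^{N/2}A^*$ and $\mathcal Z = (q-1)q^{-N/2}A_+$ from the table in Theorem \ref{lem:U2module2}, so that $\mathcal Z\mathcal Y = (q-1)A_+ A^*$; then $n_x = (1-(q-1)A_+A^*)(q-1)^{-1}$. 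Eliminating $A_+$ via Definition \ref{def:Apm2}, $A_+ = (A^*)^{-1}/(q-1) + \varphi^{-1}R$, gives $(q-1)A_+A^* = (A^*)^{-1}A^* + (q-1)\varphi^{-1}RA^* = I + (q-1)\varphi^{-1}RA^*$, so $n_x = -\varphi^{-1}RA^*$, as claimed.

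Second I would handle $n_z$ analogously: substitute $\mathcal X = (q-1)q^{-N/2}A_-$ and $\mathcal Y = q^{N/2}A^*$, so $\mathcal Y\mathcal X = (q-1)A^*A_-$, giving $n_z = (1-(q-1)A^*A_-)(q-1)^{-1}$. Eliminating $A_-$ via Definition \ref{def:Apm2}, $A_- = (A^*)^{-1}/(q-1) - \varphi(A^*)^{-1}L$, yields $(q-1)A^*A_- = A^*(A^*)^{-1} - (q-1)\varphi A^*(A^*)^{-1}L = I - (q-1)\varphi L$, so $n_z = \varphi L$, as claimed.

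There is essentially no obstacle here — the computation is a direct substitution and simplification entirely parallel to the proof of Lemma \ref{lem:nxnz}, with the only difference being the replacement of $A^+, A^-$ by $A_+, A_-$ and the corresponding appearance of the factors $\varphi^{-1}$ and $\varphi$ coming from the definitions in Definition \ref{def:Apm2}. The one point worth stating carefully is that the alternative descriptions $n_x = q(1-\mathcal Y\mathcal Z)(q-1)^{-1}$ and $n_z = q(1-\mathcal X\mathcal Y)(q-1)^{-1}$ agree with the ones used, which is guaranteed by the relations \eqref{eq:XYZ} holding on the module $V$ of Theorem \ref{lem:U2module2}; thus either form may be used, and I would choose whichever makes the elimination shortest. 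So the proof would read: to verify $n_x = -\varphi^{-1}RA^*$, in $n_x = (1-\mathcal Z\mathcal Y)(q-1)^{-1}$ eliminate $\mathcal Y, \mathcal Z$ using Theorem \ref{lem:U2module2} and evaluate using Definition \ref{def:Apm2}; to verify $n_z = \varphi L$, in $n_z = (1-\mathcal Y\mathcal X)(q-1)^{-1}$ eliminate $\mathcal X, \mathcal Y$ using Theorem \ref{lem:U2module2} and evaluate using Definition \ref{def:Apm2}.
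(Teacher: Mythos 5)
Your proposal is correct and follows exactly the paper's approach: substitute the actions of $\mathcal Y, \mathcal Z$ (resp.\ $\mathcal X, \mathcal Y$) from Theorem \ref{lem:U2module2} into $n_x = (1-\mathcal Z\mathcal Y)(q-1)^{-1}$ (resp.\ $n_z = (1-\mathcal Y\mathcal X)(q-1)^{-1}$) and simplify via Definition \ref{def:Apm2}. The explicit computations you give are accurate and merely unpack what the paper leaves to the reader.
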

\begin{proof} To verify $n_x = -\varphi^{-1} RA^*$, in $n_x = (1-\mathcal Z \mathcal Y)(q-1)^{-1}$ eliminate $\mathcal Y,\mathcal Z$ using
Theorem \ref{lem:U2module2}, and evaluate the result using Definition  \ref{def:Apm2}. 
To verify $n_z = \varphi L$, in $n_z = (1-\mathcal Y \mathcal X)(q-1)^{-1}$ eliminate $\mathcal X,\mathcal Y$ using
Theorem \ref{lem:U2module2}, and evaluate the result using Definition  \ref{def:Apm2}.
\end{proof}

\noindent We recall the $q$-exponential function; see for example \cite[p.~204]{tanisaki}. 
 Let $\psi$ denote a linear operator that acts on finite-dimensional $U_{q^{1/2}} (\mathfrak{sl}_2)$-modules
in a nilpotent fashion. We define
\begin{align}
{\rm exp}_{q^{1/2}}(\psi) = \sum_{i=0}^\infty \frac{ q^{i(i-1)/2} }{ \lbrack i \rbrack^{!}_q} \psi^i. \label{eq:Exp1}
\end{align}
We view ${\rm exp}_{q^{1/2}}(\psi)$ as a linear operator that acts on finite-dimensional $U_{q^{1/2}} (\mathfrak{sl}_2)$-modules.
On such a module  ${\rm exp}_{q^{1/2}}(\psi)$ is invertible; the inverse is 
\begin{align}
{\rm exp}_{q^{-1/2}}(-\psi) = \sum_{i=0}^\infty \frac{ (-1)^i }{ \lbrack i \rbrack^{!}_q} \psi^i. \label{eq:Exp2}
\end{align}

\begin{lemma} {\rm (See \cite[Sections~5,~6]{equit}.)} \label{eq:expE}The following {\rm (i)--(iii)} hold on each finite-dimensional $U_{q^{1/2}} (\mathfrak{sl}_2)$-module:
\begin{enumerate}
\item[\rm (i)] $\mathcal X\,{\rm exp}_{q^{1/2}} (n_x) = {\rm exp}_{q^{1/2}} (n_x) \, (\mathcal X +\mathcal Y-\mathcal Y^{-1})$;
\item[\rm (ii)] $\mathcal Y\,{\rm exp}_{q^{1/2}} (n_x) = {\rm exp}_{q^{1/2}} (n_x) \, \mathcal Y \mathcal Z \mathcal Y$;
\item[\rm (iii)] $\mathcal Z\,{\rm exp}_{q^{1/2}} (n_x) = {\rm exp}_{q^{1/2}} (n_x) \, \mathcal Y^{-1}$.
\end{enumerate}
\end{lemma}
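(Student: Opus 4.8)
\textbf{Proof proposal for Lemma \ref{eq:expE}.}

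The plan is to reduce the three identities to purely algebraic statements inside $U_{q^{1/2}}(\mathfrak{sl}_2)$ and then verify each by comparing the action of both sides on an arbitrary vector of a finite-dimensional module. Since every relevant operator here acts locally nilpotently on a finite-dimensional module, the series defining ${\rm exp}_{q^{1/2}}(n_x)$ terminates, so ${\rm exp}_{q^{1/2}}(n_x)$ is a well-defined invertible operator with inverse ${\rm exp}_{q^{-1/2}}(-n_x)$ by \eqref{eq:Exp1}, \eqref{eq:Exp2}. Thus each of (i)--(iii) is equivalent to an identity of the form
\[
{\rm exp}_{q^{-1/2}}(-n_x)\,g\,{\rm exp}_{q^{1/2}}(n_x) = g'
\]
for $g \in \{\mathcal X,\mathcal Y,\mathcal Z\}$, where $g'$ is the claimed right-hand factor. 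The standard device here is a $q$-analogue of the adjoint action: one expands the conjugation of $g$ by ${\rm exp}_{q^{1/2}}(n_x)$ as a finite sum of iterated $q$-commutators of $g$ with $n_x$, so the whole problem is controlled by how $n_x$ $q$-commutes with each equitable generator.

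First I would record the basic commutation relations between $n_x$ and the equitable generators. From the definition $n_x = q(1-\mathcal Y\mathcal Z)/(q-1) = (1-\mathcal Z\mathcal Y)/(q-1)$ together with the defining relations \eqref{eq:XYZ}, one computes the relations $[\mathcal X, n_x]_q$, $[\mathcal Y, n_x]$, $[\mathcal Z, n_x]$ for the appropriate $q$-brackets; indeed this is precisely the content worked out in \cite[Sections~5,~6]{equit}, and in the equitable presentation these relations are especially clean because $n_x$ is, up to scalar, a "lowering-type" element naturally paired with $\mathcal X$. The key inputs are that $n_x$ commutes with $\mathcal Z$ appropriately (giving case (iii) almost immediately, since conjugating $\mathcal Z$ by ${\rm exp}_{q^{1/2}}(n_x)$ collapses), that the $q$-commutator of $\mathcal Y$ with $n_x$ reproduces a multiple of $n_x$ itself (yielding the $\mathcal Y\mathcal Z\mathcal Y$ in (ii) after resumming the exponential series, using the $q$-binomial identities that make ${\rm exp}_{q^{1/2}}$ behave like an exponential), and that the $q$-commutator of $\mathcal X$ with $n_x$ produces the combination $\mathcal Y-\mathcal Y^{-1}$, which after one more layer of bracketing terminates — giving (i). At each stage the resummation uses the $q$-Pascal relation for $\binom{n}{i}_q$ and the telescoping of the factors $q^{i(i-1)/2}/\lbrack i\rbrack^!_q$; this is exactly the mechanism by which $q$-exponentials conjugate generators in $U_q(\mathfrak{sl}_2)$, and it is carried out in \cite[Sections~5,~6]{equit}.

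The cleanest route, and the one I would actually write, is simply to cite \cite[Sections~5,~6]{equit}: the present $n_x$ is the element denoted there by the same symbol (up to the substitution $q \mapsto q^{1/2}$ in the notation of the current paper), and identities (i)--(iii) are stated there for every finite-dimensional $U_{q^{1/2}}(\mathfrak{sl}_2)$-module. Since $V$ carries such a module structure by Lemma \ref{lem:Umodule} (and also by Theorem \ref{lem:U2module2}), the identities hold on $V$ in particular. The main obstacle, if one insists on a self-contained derivation rather than a citation, is the bookkeeping in the resummation step for (ii): one must verify that $\sum_i \frac{q^{i(i-1)/2}}{\lbrack i\rbrack^!_q}\,({\rm ad}_q\, n_x)^i(\mathcal Y)$ collapses to $\mathcal Y\mathcal Z\mathcal Y$, which requires knowing that the iterated $q$-adjoint action of $n_x$ on $\mathcal Y$ closes up after finitely many steps with the right $q$-power coefficients — a computation that is routine but unenlightening, and best left to the reference.
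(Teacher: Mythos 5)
Your proposal matches the paper's treatment: the lemma is stated as a recalled fact with no proof beyond the citation to \cite[Sections~5,~6]{equit} (under the substitution $q\mapsto q^{1/2}$), and you correctly identify that citation as the intended route. Your surrounding sketch of the $q$-adjoint mechanism is optional color and not part of the paper's argument; since you explicitly defer to the reference rather than completing that sketch, it does not affect correctness.
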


\begin{lemma} {\rm (See \cite[Sections~5,~6]{equit}.)} \label{eq:expE2}The following {\rm (i)--(iii)} hold on each finite-dimensional $U_{q^{1/2}} (\mathfrak{sl}_2)$-module:
\begin{enumerate}
\item[\rm (i)] ${\rm exp}_{q^{1/2}} (n_z)\, \mathcal X= \mathcal Y^{-1} \,{\rm exp}_{q^{1/2}} (n_z) $;
\item[\rm (ii)] ${\rm exp}_{q^{1/2}} (n_z)\, \mathcal Y= \mathcal Y \mathcal X \mathcal Y\,  {\rm exp}_{q^{1/2}} (n_z) $;
\item[\rm (iii)] ${\rm exp}_{q^{1/2}} (n_z) \,\mathcal Z= (\mathcal Z + \mathcal Y - \mathcal Y^{-1})\, {\rm exp}_{q^{1/2}} (n_z) $.
\end{enumerate}
\end{lemma}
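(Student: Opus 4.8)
\textbf{Proof proposal for Lemma~\ref{eq:expE2}.}
The plan is to derive the three identities directly from Lemma~\ref{eq:expE} together with the properties of the $q$-exponential function recorded in \eqref{eq:Exp1}, \eqref{eq:Exp2}. First I would observe that Lemma~\ref{eq:expE} is stated with the operator $n_x$, and recall from \cite[Definition~5.2]{equit} that $n_x$, $n_y$, $n_z$ play symmetric roles under the cyclic permutation $\mathcal X\mapsto\mathcal Y\mapsto\mathcal Z\mapsto\mathcal X$ of the equitable generators. Since the defining relations \eqref{eq:XYZ} are invariant under this cyclic shift, applying the shift twice to the three identities of Lemma~\ref{eq:expE} converts $n_x$ into $n_z$ and cyclically permutes $\mathcal X,\mathcal Y,\mathcal Z$, which reproduces the operators appearing on the right-hand sides of (i)--(iii) here; I would then rewrite the resulting identities by moving ${\rm exp}_{q^{1/2}}(n_z)$ across the generator, which is the form stated in the lemma. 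An equivalent route, if one prefers not to invoke the cyclic symmetry, is to conjugate: take the relations in Lemma~\ref{eq:expE}, apply the anti-automorphism or the inverse-and-transpose trick that sends ${\rm exp}_{q^{1/2}}(n_x)$ to ${\rm exp}_{q^{-1/2}}(-n_x)$ via \eqref{eq:Exp2}, and match terms.

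Concretely, the cleanest write-up is: \emph{Step 1.} State the cyclic automorphism $\sigma$ of $U_{q^{1/2}}(\mathfrak{sl}_2)$ with $\sigma(\mathcal X)=\mathcal Y$, $\sigma(\mathcal Y)=\mathcal Z$, $\sigma(\mathcal Z)=\mathcal X$ (well-defined because \eqref{eq:XYZ} is $\sigma$-invariant), and note $\sigma(n_x)=n_y$, $\sigma(n_y)=n_z$, $\sigma(n_z)=n_x$, hence $\sigma^2(n_x)=n_z$. \emph{Step 2.} Apply $\sigma^2$ to each of the three identities in Lemma~\ref{eq:expE}. Since $\sigma^2$ is an algebra automorphism it commutes with the power series defining ${\rm exp}_{q^{1/2}}$, so $\sigma^2\bigl({\rm exp}_{q^{1/2}}(n_x)\bigr)={\rm exp}_{q^{1/2}}(n_z)$; the generators transform as $\mathcal X\mapsto\mathcal Z$, $\mathcal Y\mapsto\mathcal X$, $\mathcal Z\mapsto\mathcal Y$, and $\mathcal Y^{-1}\mapsto\mathcal X^{-1}=\mathcal Y^{-1}$ in the sense that $\sigma^2(\mathcal Y)^{-1}=\mathcal X^{-1}$; here one should be slightly careful, since $\mathcal X$ is not invertible in $U_{q^{1/2}}(\mathfrak{sl}_2)$, so the correct statement uses $\mathcal Y^{\pm1}$ throughout and one tracks that the term $\mathcal Y-\mathcal Y^{-1}$ in Lemma~\ref{eq:expE}(i) becomes $\mathcal X-\text{(appropriate term)}$. \emph{Step 3.} Solve each transformed identity for the product with ${\rm exp}_{q^{1/2}}(n_z)$ on the left, using invertibility of ${\rm exp}_{q^{1/2}}(n_z)$ on finite-dimensional modules \eqref{eq:Exp2}, and check that the result matches (i)--(iii) verbatim.

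The main obstacle I anticipate is the bookkeeping around $\mathcal Y^{-1}$: the term $\mathcal Y-\mathcal Y^{-1}$ in Lemma~\ref{eq:expE}(i),(iii) does not transform into an equally clean expression under a naive application of $\sigma^2$, because $\mathcal X$ and $\mathcal Z$ are not invertible, so the cyclic symmetry at the level of the abstract algebra is not literal --- it is really a symmetry of the three relations \eqref{eq:XYZ} but not of the larger set of relations involving $\mathcal Y^{-1}$. The honest fix is to re-derive (i)--(iii) in parallel with \cite[Sections~5,~6]{equit}: namely, from the relations in Lemma~\ref{eq:expE2} with $n_z$ in place of $n_x$, one checks directly by the same telescoping argument used in \cite{equit} that conjugation by ${\rm exp}_{q^{1/2}}(n_z)$ implements the prescribed action on $\mathcal X,\mathcal Y,\mathcal Z$; the key computational input is the $q$-commutation $q\,\mathcal X\mathcal Y-\mathcal Y\mathcal X = q-1$ together with $\mathcal Y\mathcal Y^{-1}=1$, which makes the infinite sums in \eqref{eq:Exp1} collapse. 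Since the paper explicitly attributes this lemma to \cite[Sections~5,~6]{equit}, I would ultimately just cite that reference for the detailed verification and present Steps 1--3 above as the conceptual explanation of why the $n_z$-version follows from the $n_x$-version by the cyclic symmetry of the equitable presentation.
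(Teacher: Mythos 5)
The paper itself gives no proof of this lemma; both Lemma~\ref{eq:expE} and Lemma~\ref{eq:expE2} are stated with the bare citation to \cite[Sections~5,~6]{equit}, so your ultimate fallback of deferring to that reference does match the paper's treatment. However, the conceptual explanation you build around it is flawed, and you correctly sense the flaw without repairing it. The map $\sigma$ with $\sigma(\mathcal X)=\mathcal Y$, $\sigma(\mathcal Y)=\mathcal Z$, $\sigma(\mathcal Z)=\mathcal X$ is \emph{not} a well-defined automorphism of $U_{q^{1/2}}(\mathfrak{sl}_2)$: the presentation includes the relation $\mathcal Y\mathcal Y^{-1}=\mathcal Y^{-1}\mathcal Y=1$, which singles out $\mathcal Y$ as invertible, and neither $\mathcal X$ nor $\mathcal Z$ is invertible. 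So there is nowhere to send $\mathcal Y^{-1}$, and the cyclic symmetry of the three $q$-commutators in \eqref{eq:XYZ} does not extend to a symmetry of the algebra. Once you applied $\sigma^2$ to Lemma~\ref{eq:expE}(i) you would have to make sense of $\mathcal X-\mathcal X^{-1}$, which does not exist in the algebra. This is not a bookkeeping nuisance; the whole ``Step 1--Step 2'' route collapses.

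The internal symmetry that actually does the job is the \emph{anti}-automorphism $\tau$ of $U_{q^{1/2}}(\mathfrak{sl}_2)$ determined by $\tau(\mathcal X)=\mathcal Z$, $\tau(\mathcal Z)=\mathcal X$, $\tau(\mathcal Y^{\pm 1})=\mathcal Y^{\pm 1}$. One checks at once that $\tau$ preserves all defining relations \eqref{eq:XYZ} (applied as an anti-homomorphism it carries the first relation to the second, the second to the first, and fixes the third and $\mathcal Y\mathcal Y^{-1}=1$), and that $\tau(n_x)=n_z$ since $n_x=\frac{1-\mathcal Z\mathcal Y}{q-1}$ and $n_z=\frac{1-\mathcal Y\mathcal X}{q-1}$. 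Because $\tau$ reverses products, applying it to the three identities of Lemma~\ref{eq:expE} carries
${\rm exp}_{q^{1/2}}(n_x)$ to ${\rm exp}_{q^{1/2}}(n_z)$, swaps the two sides, and produces exactly (iii), (ii), (i) of Lemma~\ref{eq:expE2}, in that order. (One then passes from the resulting algebra identity to an identity of operators on each finite-dimensional module in the usual way, e.g.\ by the dual-module construction that realizes $\tau$.) This is what your parenthetical ``apply the anti-automorphism'' remark was pointing at; it should be the main argument rather than a footnote, because the cyclic automorphism route is unrepairable.
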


\begin{definition}\label{def:ydd} \rm For $y \in X$ define
\begin{align*}
&y^{\downarrow \downarrow} = {\rm exp}_{q^{-1/2}} (L) {\hat y}, \qquad \qquad  y^{\uparrow \downarrow} = {\rm exp}_{q^{1/2}} (RA^*) {\hat y}, \\
&y^{\downarrow \uparrow} = {\rm exp}_{q^{-1/2}} (-\varphi L) {\hat y}, \qquad \qquad  y^{\uparrow \uparrow} = {\rm exp}_{q^{1/2}} (-\varphi^{-1}RA^*) {\hat y}.
\end{align*}
\end{definition}

\begin{lemma} \label{lem:yymeaning} For $y \in X$ we have
\begin{align*}
&y^{\downarrow \downarrow} = \sum_{z \leq y} {\hat z}, \qquad \qquad y^{\uparrow \downarrow} = \sum_{y \leq z } {\hat z} q^{ -{\rm dim}\,y ( {\rm dim}\,z-{\rm dim}\,y)  }, \\
&y^{\downarrow \uparrow} = \sum_{z \leq y} {\hat z}(-\varphi)^{{\rm dim}\,y-{\rm dim}\,z}, \qquad \quad y^{\uparrow \uparrow} = \sum_{y \leq z } {\hat z} q^{ -{\rm dim}\,y ( {\rm dim}\,z-{\rm dim}\,y)  }  (-\varphi)^{{\rm dim}\,y-{\rm dim}\,z}.
\end{align*}
\end{lemma}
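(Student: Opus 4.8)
The plan is to compute each of the four vectors directly from the definition of the $q$-exponential function in \eqref{eq:Exp1}, \eqref{eq:Exp2}, using the explicit actions of $L$ and $RA^*$ on the standard basis vectors $\lbrace \hat y\rbrace_{y \in X}$. The key point is that on $\hat y$, the operator $L$ lowers the dimension by one, while $RA^*$ sends $\hat y \mapsto q^{-{\rm dim}\,y}\, R\hat y$, which raises the dimension by one. So each exponential series, when applied to $\hat y$, is really a finite sum whose $i$-th term lives in dimension ${\rm dim}\,y \mp i$; we must identify that term.

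\textbf{First I would handle $y^{\downarrow\downarrow} = {\rm exp}_{q^{-1/2}}(L)\hat y$.} From \eqref{eq:Exp2} this is $\sum_{i\geq 0} \frac{(-1)^i}{\lbrack i\rbrack^!_q}L^i \hat y$. Wait --- more care is needed: \eqref{eq:Exp2} is the inverse of ${\rm exp}_{q^{1/2}}$, so ${\rm exp}_{q^{-1/2}}(L) = \sum_i \frac{(-1)^i\, (-1)^{?}}{\lbrack i\rbrack^!_q}L^i$; I would pin down the exact coefficient by substituting $\psi \to L$ (resp. $-\varphi L$) into the series as written in \eqref{eq:Exp2}, namely $\sum_{i\geq 0}\frac{(-1)^i}{\lbrack i \rbrack^!_q}(-L)^i$ form, and track signs carefully. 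The combinatorial heart is the claim that $L^i\hat y = \sum_{z\leq y,\ {\rm dim}\,z = {\rm dim}\,y - i}\bigl(\text{number of chains from }z\text{ up to }y\bigr)\hat z$, and that this chain count equals $\lbrack i\rbrack^!_q$ times a $q$-binomial-type factor --- I expect it to work out exactly so that, after division by $\lbrack i\rbrack^!_q$ and collecting over all $i$, every $\hat z$ with $z\leq y$ appears with coefficient $1$ (for $y^{\downarrow\downarrow}$) or with coefficient $(-\varphi)^{{\rm dim}\,y - {\rm dim}\,z}$ (for $y^{\downarrow\uparrow}$, where each application of $-\varphi L$ contributes a factor $-\varphi$). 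This uses Lemma \ref{lem:RLact} and the counting in Lemma \ref{lem:yi}.

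\textbf{Next I would treat the "up" vectors $y^{\uparrow\downarrow}$ and $y^{\uparrow\uparrow}$.} Here the operator is $RA^*$, so $(RA^*)\hat y = q^{-{\rm dim}\,y}R\hat y$, and more generally $(RA^*)^i\hat y$ picks up a product of powers of $q$ coming from the dimensions of the intermediate vertices in a chain $y < \cdots < z$ of length $i$. The bookkeeping: if $y = v_0 < v_1 < \cdots < v_i = z$, then applying $RA^*$ successively contributes $q^{-{\rm dim}\,v_0}q^{-{\rm dim}\,v_1}\cdots q^{-{\rm dim}\,v_{i-1}}$, and since ${\rm dim}\,v_j = {\rm dim}\,y + j$ this telescopes to $q^{-i\,{\rm dim}\,y}\,q^{-i(i-1)/2}$. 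Combined with the coefficient $\frac{q^{i(i-1)/2}}{\lbrack i\rbrack^!_q}$ from \eqref{eq:Exp1}, the awkward $q^{i(i-1)/2}$ factors cancel, leaving $\frac{q^{-i\,{\rm dim}\,y}}{\lbrack i\rbrack^!_q}\times(\text{chain count from }y\text{ to }z)$; again the chain count should be exactly $\lbrack i\rbrack^!_q$ times the number of $z$'s of that dimension above $y$, reducing the coefficient of each $\hat z$ with $y\leq z$ to $q^{-i\,{\rm dim}\,y} = q^{-{\rm dim}\,y({\rm dim}\,z - {\rm dim}\,y)}$, matching the claim. For $y^{\uparrow\uparrow}$ the operator is $-\varphi^{-1}RA^*$, contributing an extra $(-\varphi^{-1})^i = (-\varphi)^{-({\rm dim}\,z - {\rm dim}\,y)}$; but the stated formula has $(-\varphi)^{{\rm dim}\,y - {\rm dim}\,z}$, which is the same thing, so it is consistent.

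\textbf{The main obstacle} is the cancellation of the $\lbrack i\rbrack^!_q$ in the denominator against the chain counts --- i.e., verifying that the number of maximal chains in the interval $[z,y]$ (or $[y,z]$) of $L_N(q)$ equals $\lbrack i\rbrack^!_q$ when the rank difference is $i$. This is a standard fact about the projective-geometry lattice (each such interval is itself isomorphic to $L_i(q)$, and the number of maximal chains in $L_i(q)$ is $\lbrack i\rbrack_q\lbrack i-1\rbrack_q\cdots\lbrack 1\rbrack_q = \lbrack i\rbrack^!_q$ by Lemma \ref{lem:yi}(i) applied inductively), so I would either cite it or give the one-line induction. Everything else is routine bookkeeping of signs and powers of $q$, and I would close by remarking that the four formulas follow by assembling these computations, with Definition \ref{def:ydd} inserting the appropriate operator in each case. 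I would keep the exposition brief, deferring the chain-counting lemma to a citation of \cite[Section~9.3]{bcn} or \cite{Lnq}.
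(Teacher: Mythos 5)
Your proposal is correct and takes essentially the same route as the paper, whose proof is the one-liner ``use Definition~\ref{def:ydd} together with \eqref{eq:Exp1} or \eqref{eq:Exp2}''; you have simply filled in the intended computation, including the telescoping of the powers of $q$ against the $q^{i(i-1)/2}$ factor and the fact that an interval of rank difference $i$ in $L_N(q)$ is isomorphic to $L_i(q)$ and hence has exactly $\lbrack i\rbrack^!_q$ maximal chains.
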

\begin{proof} Use Definition \ref{def:ydd} together with
 \eqref{eq:Exp1} (for $y^{\uparrow \downarrow}, y^{\uparrow \uparrow}$) or 
  \eqref{eq:Exp2} (for $y^{\downarrow \downarrow}, y^{\downarrow \uparrow}$).
\end{proof}

\begin{lemma}\label{lem:4basis} Each of the following is a basis for the vector space $V$:
\begin{align*}
\lbrace y^{\downarrow \downarrow} \rbrace_{y \in X}, \qquad \quad
\lbrace y^{\uparrow \downarrow} \rbrace_{y \in X}, \qquad \quad
\lbrace y^{\downarrow \uparrow} \rbrace_{y \in X}, \qquad \quad
\lbrace y^{\uparrow \uparrow} \rbrace_{y \in X}.
\end{align*}
\end{lemma}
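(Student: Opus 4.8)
The plan is to show that each of the four families arises from the standard basis $\lbrace \hat y\rbrace_{y\in X}$ by applying an invertible linear operator, hence is a basis. Concretely, by Definition \ref{def:ydd} we have
\begin{align*}
y^{\downarrow \downarrow} &= {\rm exp}_{q^{-1/2}} (L)\,\hat y, & y^{\uparrow \downarrow} &= {\rm exp}_{q^{1/2}} (RA^*)\,\hat y,\\
y^{\downarrow \uparrow} &= {\rm exp}_{q^{-1/2}} (-\varphi L)\,\hat y, & y^{\uparrow \uparrow} &= {\rm exp}_{q^{1/2}} (-\varphi^{-1}RA^*)\,\hat y,
\end{align*}
so it suffices to observe that the four operators ${\rm exp}_{q^{-1/2}}(L)$, ${\rm exp}_{q^{1/2}}(RA^*)$, ${\rm exp}_{q^{-1/2}}(-\varphi L)$, ${\rm exp}_{q^{1/2}}(-\varphi^{-1}RA^*)$ are invertible on $V$. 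Since each of $L$, $RA^*$, $-\varphi L$, $-\varphi^{-1}RA^*$ is nilpotent on $V$ (for instance $L$ lowers the $E^*_i$-grading by Lemma \ref{lem:RaiseLower}, so $L^{N+1}=0$, and $RA^*$ raises it since $A^*$ is diagonal and $R$ raises it by Lemma \ref{lem:RaiseLower}), each $q$-exponential is a well-defined polynomial in a nilpotent operator, and by the discussion around \eqref{eq:Exp1}, \eqref{eq:Exp2} it is invertible, with inverse the corresponding ${\rm exp}_{q^{1/2}}$ (resp. ${\rm exp}_{q^{-1/2}}$) of the negative. An invertible linear operator sends a basis to a basis, and $\lbrace \hat y\rbrace_{y\in X}$ is a basis for $V$ by construction, so each of the four listed families is a basis for $V$.

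The one point requiring a word of care is that the operators $RA^*$ and $L$ (and their scalar multiples) do indeed act nilpotently, so that the infinite series defining ${\rm exp}_{q^{\pm 1/2}}$ truncate and the invertibility statement from \eqref{eq:Exp2} applies; this is immediate from Lemma \ref{lem:RaiseLower} together with the fact that $A^*$ is diagonal (Definition \ref{def:As}). Alternatively, and perhaps more cleanly, one may invoke the module structures: $L = RA^*$-type operators are, up to nonzero scalars, the images of the nilpotent Chevalley/equitable generators under Lemmas \ref{lem:Umodule}, \ref{lem:U2module}, \ref{lem:U2module2}, and $q$-exponentials of nilpotent operators on a finite-dimensional $U_{q^{1/2}}(\mathfrak{sl}_2)$-module are invertible as noted after \eqref{eq:Exp1}. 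I expect no real obstacle here: the statement is essentially a formal consequence of "invertible operators preserve bases," and the only thing to verify is the nilpotency, which the grading by $\lbrace E^*_i\rbrace_{i=0}^N$ supplies at once. The explicit formulas in Lemma \ref{lem:yymeaning} can also be cited as an independent confirmation that, e.g., $y^{\downarrow\downarrow}$ expands $\hat y$ plus lower-dimensional terms, making the change-of-basis matrix unitriangular with respect to a linear extension of $\leq$, hence invertible.
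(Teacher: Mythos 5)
Your proposal is correct and is essentially the same argument as the paper's, just packaged a level higher. The paper inspects the coefficient matrix of $\lbrace y^{\downarrow\downarrow}\rbrace$ (and the other three families) against $\lbrace\hat x\rbrace$ using Lemma \ref{lem:yymeaning} and observes it is triangular with $1$'s on the diagonal, hence invertible; you instead observe that each family is the image of the standard basis under ${\rm exp}_{q^{\pm 1/2}}$ of a nilpotent operator, which is invertible per the remarks around \eqref{eq:Exp1}--\eqref{eq:Exp2}, and you note the unitriangularity route as a cross-check. The two observations are equivalent (a $q$-exponential of a strictly triangular nilpotent operator is unitriangular), so there is no real gap, only a difference in emphasis: the paper's version is self-contained and needs nothing beyond Lemma \ref{lem:yymeaning}, while yours leans on the stated invertibility of $q$-exponentials of nilpotents but makes the connection to the algebraic structure more visible.
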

\begin{proof} Consider the vectors $\lbrace y^{\downarrow \downarrow} \rbrace_{y \in X}$. List the elements of $X$ in order  of nondecreasing dimension.
 For $y \in X$ write $y^{\downarrow \downarrow}$ in the basis $\lbrace {\hat x}\vert x \in X\rbrace$. The coefficient matrix is upper triangular and has all diagonal entries 1.
 This matrix is invertible, so the vectors $\lbrace y^{\downarrow \downarrow} \rbrace_{y \in X}$ form a basis for $V$. For the other three bases, the proof is similar.
\end{proof}

\begin{lemma} \label{lem:Actionydd} For $y \in X$ we have
\begin{align*}
A^* y^{\downarrow \downarrow} &=   q^{-{\rm dim}\,y}  y^{\downarrow \downarrow} + (q-1)q^{-{\rm dim}\,y} \sum_{y \,{\rm covers}\,z} z^{\downarrow \downarrow}, \\
A^+ y^{\downarrow \downarrow} &= \frac{q^{N-{\rm dim}\,y}}{q-1} y^{\downarrow \downarrow} - \sum_{z \,{\rm covers}\,y} z^{\downarrow \downarrow},\\
A^- y^{\downarrow \downarrow} & = \frac{q^{{\rm dim}\,y}}{q-1} y^{\downarrow \downarrow},\\
A y^{\downarrow \downarrow} &= \frac{\varphi q^{{\rm dim}\,y}-q^{N-{\rm dim}\,y}}{q-1} y^{\downarrow \downarrow} + \sum_{z \,{\rm covers}\,y} z^{\downarrow \downarrow} .
\end{align*}
\end{lemma}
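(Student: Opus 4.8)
The plan is to evaluate each of the four operators on the vector $y^{\downarrow\downarrow} = \sum_{z\le y}\hat z$ directly, using the explicit description of $y^{\downarrow\downarrow}$ from Lemma~\ref{lem:yymeaning} together with the actions of $A^*,R,L$ on the standard basis vectors $\hat z$ from \eqref{eq:AsAct} and Lemma~\ref{lem:RLact}. First I would treat $A^*$: since $A^*\hat z = q^{-\dim z}\hat z$, applying $A^*$ to $y^{\downarrow\downarrow}$ gives $\sum_{z\le y} q^{-\dim z}\hat z$, and the task is to re-express this in the $\downarrow\downarrow$-basis. Writing $q^{-\dim z} = q^{-\dim y} + (q^{-\dim z}-q^{-\dim y})$ and recognizing that $\sum_{z\le y, z\ne y}(\cdots)\hat z$ should collapse into a sum over the subspaces $w$ covered by $y$ of $w^{\downarrow\downarrow}$, one checks the coefficient bookkeeping: each $z<y$ with $\dim z = \dim y - k$ lies below exactly $\lbrack k\rbrack_q$ of the covers of $y$, and one needs the telescoping identity that makes the factor $(q-1)q^{-\dim y}$ come out correctly. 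This is the one genuinely combinatorial step.

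Next I would handle $A^-$, which is the cleanest: by Definition~\ref{def:Apm}, $A^- = \frac{(A^*)^{-1}}{q-1} + (A^*)^{-1}L$, and the key observation is that $y^{\downarrow\downarrow}$ is (a scalar multiple of) a highest-weight-type vector for the relevant $\mathfrak{sl}_2$ action. Concretely, applying Lemma~\ref{eq:expE}(iii) with $n_z = -L$ shows that $\mathrm{exp}_{q^{-1/2}}(L) = \mathrm{exp}_{q^{1/2}}(n_z)^{-1}$ intertwines $\mathcal Z$ (i.e.\ $A^+$ up to scalar) in a way that, after translating via Theorem~\ref{lem:U2module} and Lemma~\ref{lem:nxnz}, forces $A^- y^{\downarrow\downarrow}$ to be a scalar multiple of $y^{\downarrow\downarrow}$; computing that scalar by looking at the $\hat y$-coefficient gives $q^{\dim y}/(q-1)$. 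Alternatively one can just compute $A^- y^{\downarrow\downarrow}$ entry-by-entry: $(A^*)^{-1}L\,\hat z = q^{\dim z - 1}\sum_{z\,\mathrm{covers}\,w}\hat w$, sum over $z\le y$, and combine with $\frac{1}{q-1}(A^*)^{-1}y^{\downarrow\downarrow} = \frac{1}{q-1}\sum_{z\le y}q^{\dim z}\hat z$; the coefficient of each $\hat w$ with $\dim w = \dim y - j$ works out to $\frac{q^{\dim w}}{q-1}$ after summing the geometric-type contributions, which is exactly what an eigenvector relation predicts.

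Then $A^+ = \frac{(A^*)^{-1}}{q-1} - R$ is immediate once $A^-$ and $A^*$ are in hand, via a bridge relation. From \eqref{eq:AsAct} and Lemma~\ref{lem:RLact}, $R\,y^{\downarrow\downarrow} = \sum_{z\le y}\sum_{w\,\mathrm{covers}\,z}\hat w$; reorganizing this double sum by grouping all $\hat w$ with fixed $\dim w$ and using that $w\ge y$ contributes $\sum_{z\le y}\hat w$ while intermediate $w$'s contribute with the right multiplicity yields $R\,y^{\downarrow\downarrow} = \sum_{z\,\mathrm{covers}\,y} z^{\downarrow\downarrow} + (\text{lower terms that cancel})$ — more cleanly, one notes $R\,y^{\downarrow\downarrow} = \sum_{z:\,z\ge y,\ \dim z = \dim y+1}z^{\downarrow\downarrow}$ after the telescoping, giving $\sum_{z\,\mathrm{covers}\,y}z^{\downarrow\downarrow}$, whence $A^+ y^{\downarrow\downarrow} = \frac{q^{N-\dim y}}{q-1}y^{\downarrow\downarrow} - \sum_{z\,\mathrm{covers}\,y}z^{\downarrow\downarrow}$, using that $(A^*)^{-1}y^{\downarrow\downarrow}/(q-1) = \sum_{z\le y}q^{\dim z}\hat z/(q-1)$ and a second telescoping sends $\frac{1}{q-1}\sum_{z\le y}q^{\dim z}\hat z = \frac{q^{\dim y}}{q-1}y^{\downarrow\downarrow} + (q^{N-\dim y}/(q-1) - q^{\dim y}/(q-1))(\cdots)$ — here I would double-check the exponent $N-\dim y$ against $A^+ = \frac{(A^*)^{-1}}{q-1}-R$ by using the relation $LR - RL = \frac{q^N A^* - (A^*)^{-1}}{q-1}$ from Lemma~\ref{lem:AsRL} applied to $y^{\downarrow\downarrow}$, which pins down the $q^{N-\dim y}$ coefficient unambiguously. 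Finally $A\,y^{\downarrow\downarrow}$ follows with no further work: since $A = \varphi A^- - A^+$ by \eqref{eq:AAA}, we get $A\,y^{\downarrow\downarrow} = \varphi\frac{q^{\dim y}}{q-1}y^{\downarrow\downarrow} - \frac{q^{N-\dim y}}{q-1}y^{\downarrow\downarrow} + \sum_{z\,\mathrm{covers}\,y}z^{\downarrow\downarrow} = \frac{\varphi q^{\dim y} - q^{N-\dim y}}{q-1}y^{\downarrow\downarrow} + \sum_{z\,\mathrm{covers}\,y}z^{\downarrow\downarrow}$, matching the claim. The main obstacle is the first telescoping/recombination lemma — proving that $\sum_{z<y}c_{\dim z}\hat z$ equals an explicit combination of the $w^{\downarrow\downarrow}$ over $w$ covered by $y$ — which requires the interval structure of $L_N(q)$ (each lower interval $[0,y]$ is itself a projective geometry) and the counting in Lemma~\ref{lem:yi}; once that recombination identity is isolated and proved, all four formulas drop out mechanically.
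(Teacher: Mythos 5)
Your overall route is genuinely different from the paper's. The paper does \emph{not} verify these formulas by coefficient-chasing in the standard basis; it conjugates the intertwining relations of Lemma \ref{eq:expE2} by ${\rm exp}_{q^{1/2}}(n_z)^{-1}$ (note: Lemma \ref{eq:expE2}, not Lemma \ref{eq:expE}, since $y^{\downarrow\downarrow}$ involves $L=-n_z$), applies both sides to $\hat y$, and reads off all three formulas at once from the $U_{q^{1/2}}(\mathfrak{sl}_2)$ dictionary of Theorem \ref{lem:U2module} and Lemma \ref{lem:nxnz}. That approach requires essentially no combinatorial counting. Your direct approach can be made to work, and your treatment of $A^*$ (the $[k]_q$ count of covers of $y$ above a fixed $z<y$) and your second, entry-by-entry computation of $A^-$ are both correct in outline.

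However, there is a concrete error in your $A^+$ paragraph that you need to repair. You assert that, after telescoping, $R\,y^{\downarrow\downarrow} = \sum_{z\,{\rm covers}\,y} z^{\downarrow\downarrow}$. This is false. Comparing coefficients of $\hat w$ with $w\le y$: in $R\,y^{\downarrow\downarrow}=\sum_{z\le y}\sum_{w\,{\rm covers}\,z}\hat w$ the coefficient is $[\dim w]_q$ (the number of coatoms of $[\mathbf 0,w]$), whereas in $\sum_{z\,{\rm covers}\,y}z^{\downarrow\downarrow}$ the coefficient is $[N-\dim y]_q$ (every cover of $y$ contains every $w\le y$). These differ whenever $\dim w\neq N-\dim y$. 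The two expressions \emph{do} agree on the coefficients of $\hat w$ with $w\not\le y$, where each is $1$ or $0$ according as $\dim(w\cap y)=\dim w-1$ or not, by Lemma \ref{lem:modular}. The correct statement is
\begin{align*}
R\,y^{\downarrow\downarrow} \;=\; \sum_{z\,{\rm covers}\,y}z^{\downarrow\downarrow} \;+\; \sum_{w\le y}\bigl([\dim w]_q - [N-\dim y]_q\bigr)\hat w,
\end{align*}
and the leftover sum is exactly what is needed: combining it with $\tfrac{1}{q-1}(A^*)^{-1}y^{\downarrow\downarrow}=\tfrac{1}{q-1}\sum_{w\le y}q^{\dim w}\hat w$ gives $\tfrac{q^{N-\dim y}}{q-1}\sum_{w\le y}\hat w$, hence $A^+y^{\downarrow\downarrow}=\tfrac{q^{N-\dim y}}{q-1}y^{\downarrow\downarrow}-\sum_{z\,{\rm covers}\,y}z^{\downarrow\downarrow}$. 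Your subsequent ``second telescoping'' formula and the appeal to $LR-RL$ do not repair the false intermediate identity; you should either carry out the coefficient comparison as above, or, more efficiently, adopt the paper's method and read all three formulas off from Lemma \ref{eq:expE2}.
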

\begin{proof}  To obtain the first three equations,
consider the $U_{q^{1/2}} (\mathfrak{sl}_2)$-module $V$ from Lemma  \ref{lem:Umodule} and Theorem \ref{lem:U2module}. We adjust each equation in Lemma \ref{eq:expE2}, by multiplying each 
side on the left and right by the inverse of ${\rm exp}_{q^{1/2}}(n_z)$.  In the resulting equation, apply each side to $\hat y$, and evaluate the result using
Definitions  \ref{def:Apm}, \ref{def:ydd} and \eqref{eq:AsAct} along with  Lemmas  \ref{lem:RLact},  \ref{lem:nxnz} and Theorem  \ref{lem:U2module}. This yields the first three equations.
 To get the fourth equation, use  \eqref{eq:AAA}.
\end{proof}

\begin{lemma} \label{lem:Actionyud} For $y \in X$ we have
\begin{align*}
A^* y^{\uparrow \downarrow} &=   q^{-{\rm dim}\,y}  y^{\uparrow \downarrow} - (q-1)q^{-2\, {\rm dim}\,y-1} \sum_{z \,{\rm covers}\,y} z^{\uparrow \downarrow}, \\
A^+ y^{\uparrow \downarrow} & = \frac{q^{{\rm dim}\,y}}{q-1} y^{\uparrow \downarrow},\\
A^- y^{\uparrow \downarrow} &= \frac{q^{N-{\rm dim}\,y}}{q-1} y^{\uparrow \downarrow} +q^{{\rm dim}\,y-1} \sum_{y \,{\rm covers}\,z} z^{\uparrow \downarrow},\\
A y^{\uparrow \downarrow} &= \frac{\varphi q^{N-{\rm dim}\,y}-q^{{\rm dim}\,y}}{q-1} y^{\uparrow \downarrow} + \varphi q^{{\rm dim}\,y-1}\sum_{y \,{\rm covers}\,z} z^{\uparrow \downarrow} .
\end{align*}
\end{lemma}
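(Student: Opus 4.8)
The plan is to mimic the proof of Lemma \ref{lem:Actionydd}, but now working with the second $U_{q^{1/2}}(\mathfrak{sl}_2)$-module structure and the operator ${\rm exp}_{q^{1/2}}(n_x)$ instead of ${\rm exp}_{q^{1/2}}(n_z)$. Recall from Definition \ref{def:ydd} that $y^{\uparrow\downarrow} = {\rm exp}_{q^{1/2}}(RA^*){\hat y}$, and from Lemma \ref{lem:nxnz} that $n_x = RA^*$ on the $U_{q^{1/2}}(\mathfrak{sl}_2)$-module $V$ of Lemma \ref{lem:Umodule} and Theorem \ref{lem:U2module}. So $y^{\uparrow\downarrow} = {\rm exp}_{q^{1/2}}(n_x){\hat y}$ on that module. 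I would start from the three intertwining relations in Lemma \ref{eq:expE}, which express $\mathcal X\,{\rm exp}_{q^{1/2}}(n_x)$, $\mathcal Y\,{\rm exp}_{q^{1/2}}(n_x)$, and $\mathcal Z\,{\rm exp}_{q^{1/2}}(n_x)$ in terms of ${\rm exp}_{q^{1/2}}(n_x)$ times a polynomial in $\mathcal X,\mathcal Y,\mathcal Y^{-1},\mathcal Z$.

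The key steps, in order: First, apply each relation in Lemma \ref{eq:expE} to ${\hat y}$. Since ${\rm exp}_{q^{1/2}}(n_x){\hat y} = y^{\uparrow\downarrow}$, the left sides become $\mathcal X\,y^{\uparrow\downarrow}$, $\mathcal Y\,y^{\uparrow\downarrow}$, $\mathcal Z\,y^{\uparrow\downarrow}$. On the right sides I need to evaluate the polynomials in $\mathcal X,\mathcal Y,\mathcal Y^{-1},\mathcal Z$ applied to ${\hat y}$, then push ${\rm exp}_{q^{1/2}}(n_x)$ through. Using Theorem \ref{lem:U2module}, on this module $\mathcal Y = q^{N/2}A^*$ acts diagonally on ${\hat y}$ by \eqref{eq:AsAct}, and $\mathcal X = (q-1)q^{-N/2}A^-$, $\mathcal Z = (q-1)q^{-N/2}A^+$, so the relevant words ($\mathcal Y^{-1}$, $\mathcal Y\mathcal Z\mathcal Y$, $\mathcal X + \mathcal Y - \mathcal Y^{-1}$) applied to ${\hat y}$ reduce to explicit vectors: $\mathcal Y^{-1}{\hat y} = q^{-N/2}(A^*)^{-1}{\hat y} = q^{\dim y - N/2}{\hat y}$ gives a scalar multiple of ${\hat y}$, whose ${\rm exp}_{q^{1/2}}(n_x)$-image is a scalar multiple of $y^{\uparrow\downarrow}$; and $\mathcal Y\mathcal Z\mathcal Y$ applied to ${\hat y}$ involves $A^+ = (A^*)^{-1}/(q-1) - R$ acting on ${\hat y}$, which by Lemma \ref{lem:RLact} and \eqref{eq:AsAct} produces a diagonal term plus a sum over $z$ covering $y$; then ${\rm exp}_{q^{1/2}}(n_x)$ sends ${\hat z} \mapsto z^{\uparrow\downarrow}$. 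Second, translate the three resulting equations back into statements about $A^*$, $A^+$, $A^-$ on $y^{\uparrow\downarrow}$ by clearing the scalar factors $q^{\pm N/2}$, $(q-1)$ from Theorem \ref{lem:U2module}; this should yield exactly the first three displayed formulas (for $A^*y^{\uparrow\downarrow}$, $A^+y^{\uparrow\downarrow}$, $A^-y^{\uparrow\downarrow}$). Third, obtain the fourth formula for $A\,y^{\uparrow\downarrow}$ by substituting into $A = \varphi A^- - A^+$ from \eqref{eq:AAA} and simplifying, using Lemma \ref{lem:yi}(i) to identify the number of $z$ covered by $y$ if a combinatorial count appears, though more likely the $A^-$ term carries the sum over $y$ covers $z$ and the $A^+$ term is purely diagonal, so the combination is immediate.

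The main obstacle I expect is bookkeeping with the exponents of $q$ and the direction of the covering relations: $y^{\uparrow\downarrow}$ is "up" in the first slot, so the off-diagonal terms in $A^*y^{\uparrow\downarrow}$ and $A^-y^{\uparrow\downarrow}$ should run over $z$ that \emph{cover} $y$ (raising) while the one in $A^-y^{\uparrow\downarrow}$ in the statement actually runs over $y$ covers $z$ — I must be careful to check which intertwiner in Lemma \ref{eq:expE} produces which direction. Relation (iii) gives $\mathcal Z\,{\rm exp}_{q^{1/2}}(n_x) = {\rm exp}_{q^{1/2}}(n_x)\mathcal Y^{-1}$, so $\mathcal Z\,y^{\uparrow\downarrow}$ is a scalar multiple of $y^{\uparrow\downarrow}$ — this matches $A^+y^{\uparrow\downarrow}$ being purely diagonal. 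Relation (ii) gives the $A^*$ formula and relation (i) (or a combination) gives the $A^-$ formula; disentangling these and matching the precise powers $q^{-2\dim y - 1}$ and $q^{\dim y - 1}$ in the statement is where the computation is delicate but routine. A secondary subtlety is justifying that pushing ${\rm exp}_{q^{1/2}}(n_x)$ past the diagonal action is legitimate — but this is just linearity and the fact that ${\hat z} \mapsto z^{\uparrow\downarrow}$ is the defining relation, so no real difficulty there.

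Overall, the proof is a direct parallel of Lemma \ref{lem:Actionydd} with $(n_z, A^+, A^-)$ replaced by $(n_x, A^-, A^+)$ in the appropriate slots and with the "$\downarrow\downarrow$" vectors replaced by "$\uparrow\downarrow$" vectors; the one genuinely new input is Lemma \ref{eq:expE} (rather than Lemma \ref{eq:expE2}), and the correctness of the final formulas rests on carefully tracking the $q$-power normalizations from Theorem \ref{lem:U2module} and the explicit actions in \eqref{eq:AsAct} and Lemma \ref{lem:RLact}.
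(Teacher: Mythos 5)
Your proposal matches the paper's proof exactly: apply the three intertwining relations of Lemma \ref{eq:expE} to $\hat y$ on the $U_{q^{1/2}}(\mathfrak{sl}_2)$-module from Lemma \ref{lem:Umodule}/Theorem \ref{lem:U2module}, use $y^{\uparrow\downarrow} = {\rm exp}_{q^{1/2}}(n_x)\hat y$ with $n_x = RA^*$ from Lemma \ref{lem:nxnz}, unwind via Theorem \ref{lem:U2module}, Definition \ref{def:Apm}, \eqref{eq:AsAct}, and Lemma \ref{lem:RLact} to get the $A^*$, $A^+$, $A^-$ formulas, then combine via $A = \varphi A^- - A^+$ for the fourth. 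The one direction you flagged as uncertain resolves as you suspected: relation (iii) yields the purely diagonal $A^+$ action, relation (ii) yields $A^*$ (with $R$ inside $A^+$ giving the sum over $z$ covering $y$), and relation (i) yields $A^-$ (with $L$ inside $A^-$ giving the sum over $z$ covered by $y$).
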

\begin{proof}
To obtain the first three equations,
consider the $U_{q^{1/2}} (\mathfrak{sl}_2)$-module $V$ from Lemma  \ref{lem:Umodule} and Theorem \ref{lem:U2module}. For each equation in Lemma \ref{eq:expE},  apply each side to $\hat y$, and evaluate the result using
Definitions  \ref{def:Apm}, \ref{def:ydd} and \eqref{eq:AsAct} along with  Lemmas  \ref{lem:RLact},  \ref{lem:nxnz} and Theorem  \ref{lem:U2module}. This yields the first three equations.
 To get the fourth equation, use  \eqref{eq:AAA}.
\end{proof}

\begin{lemma} \label{lem:Actionydu} For $y \in X$ we have
\begin{align*}
A^* y^{\downarrow \uparrow} &=   q^{-{\rm dim}\,y}  y^{\downarrow \uparrow} - (q-1) \varphi q^{-{\rm dim}\,y} \sum_{y \,{\rm covers}\,z} z^{\downarrow \uparrow}, \\
A_+ y^{\downarrow \uparrow} &= \frac{q^{N-{\rm dim}\,y}}{q-1} y^{\downarrow \uparrow} +\varphi^{-1} \sum_{z \,{\rm covers}\,y} z^{\downarrow \uparrow},\\
A_- y^{\downarrow \uparrow} & = \frac{q^{{\rm dim}\,y}}{q-1} y^{\downarrow \uparrow},\\
A y^{\downarrow \uparrow} &= \frac{\varphi q^{N-{\rm dim}\,y}-q^{{\rm dim}\,y}}{q-1} y^{\downarrow \uparrow} + \sum_{z \,{\rm covers}\,y} z^{\downarrow \uparrow} .
\end{align*}
\end{lemma}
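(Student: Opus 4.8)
The plan is to imitate the proof of Lemma~\ref{lem:Actionydd}, but working with the second $U_{q^{1/2}}(\mathfrak{sl}_2)$-module structure on $V$ from Theorem~\ref{lem:U2module2} instead of the one from Theorem~\ref{lem:U2module}. The key observation is that the vector $y^{\downarrow\uparrow} = {\rm exp}_{q^{-1/2}}(-\varphi L)\,\hat y$ is precisely ${\rm exp}_{q^{-1/2}}(n_z)\,\hat y$ when $n_z$ is computed on the module of Theorem~\ref{lem:U2module2}, by Lemma~\ref{lem:nxnz2}, which gives $n_z = \varphi L$ there; and ${\rm exp}_{q^{-1/2}}(-\psi)$ is the inverse of ${\rm exp}_{q^{1/2}}(\psi)$ by \eqref{eq:Exp1}, \eqref{eq:Exp2}. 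So $y^{\downarrow\uparrow} = {\rm exp}_{q^{1/2}}(n_z)^{-1}\,\hat y$.

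For the first three equations, I would start from the three relations in Lemma~\ref{eq:expE2}, which hold on any finite-dimensional $U_{q^{1/2}}(\mathfrak{sl}_2)$-module and hence on the module of Theorem~\ref{lem:U2module2}. Multiply each relation on the left and right by ${\rm exp}_{q^{1/2}}(n_z)^{-1}$ to obtain expressions of the form $g\,{\rm exp}_{q^{1/2}}(n_z)^{-1} = {\rm exp}_{q^{1/2}}(n_z)^{-1}\,g'$ for $g \in \{\mathcal X, \mathcal Y, \mathcal Z\}$ and $g'$ the corresponding right-hand side. Apply both sides to $\hat y$: the left side gives $g\,y^{\downarrow\uparrow}$, and the right side gives ${\rm exp}_{q^{1/2}}(n_z)^{-1}$ applied to $g'\hat y$, which I evaluate using \eqref{eq:AsAct}, Lemma~\ref{lem:RLact}, and Definition~\ref{def:ydd}. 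Then I translate from the equitable generators $\mathcal X,\mathcal Y,\mathcal Z$ to $A^*, A_+, A_-$ via Theorem~\ref{lem:U2module2} and Definition~\ref{def:Apm2}: $\mathcal Y = q^{N/2}A^*$ gives the $A^*$-equation, $\mathcal X$ is (up to scalar) $A_-$ and $\mathcal Z$ is (up to scalar) $A_+$. The parameter $\varphi$ enters exactly through $n_z = \varphi L$, which explains the factors $\varphi$ and $\varphi^{-1}$ appearing in the $A^*$- and $A_+$-equations. Finally the fourth equation follows by taking the combination $A = \varphi A_+ - A_-$ from \eqref{eq:AAA2} and adding the $A_+$ and $A_-$ equations.

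The main obstacle is bookkeeping rather than conceptual: one must correctly track which of $\mathcal X, \mathcal Z$ corresponds to $A_-, A_+$ and carry the scalars $(q-1)q^{-N/2}$ through, and one must be careful that Lemma~\ref{eq:expE2} is stated with ${\rm exp}_{q^{1/2}}(n_z)$ on the \emph{left} of each generator, so conjugating to isolate $g\,y^{\downarrow\uparrow}$ requires multiplying on both sides and reading off the image of $\hat y$ under ${\rm exp}_{q^{1/2}}(n_z)^{-1}$ applied to a vector supported on $\{z : z \le y\}$ --- i.e. one stays within the span of the $\downarrow\uparrow$-basis vectors indexed by subspaces of $y$, which is why only the terms $z^{\downarrow\uparrow}$ with $y$ covers $z$ (for $A^*$) or $z$ covers $y$ (for $A_+$, $A$) appear. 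Everything else is a routine evaluation parallel to Lemma~\ref{lem:Actionydd}.
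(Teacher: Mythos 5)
Your proposal is correct and mirrors the paper's proof exactly: both work on the $U_{q^{1/2}}(\mathfrak{sl}_2)$-module from Theorem~\ref{lem:U2module2}, conjugate the identities of Lemma~\ref{eq:expE2} by ${\rm exp}_{q^{1/2}}(n_z)^{-1}$ using $n_z = \varphi L$ from Lemma~\ref{lem:nxnz2}, apply to $\hat y$, translate via Theorem~\ref{lem:U2module2} and Definition~\ref{def:Apm2}, and finish with $A = \varphi A_+ - A_-$. (One small slip in your closing remark: for the $A_+$ and $A$ equations the vectors $z^{\downarrow\uparrow}$ that appear have $z$ covering $y$, so one does not ``stay within the subspaces of $y$''; the correct point, which you also state, is simply that ${\rm exp}_{q^{1/2}}(n_z)^{-1}$ carries each $\hat z$ to $z^{\downarrow\uparrow}$.)
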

\begin{proof} Similar to the proof of Lemma \ref{lem:Actionydd}.
To obtain the first three equations,
consider the $U_{q^{1/2}} (\mathfrak{sl}_2)$-module $V$ from Theorem \ref{lem:U2module2}. We adjust each equation in Lemma \ref{eq:expE2}, by multiplying each 
side on the left and right by the inverse of ${\rm exp}_{q^{1/2}}(n_z)$.  In the resulting equation, apply each side to $\hat y$, and evaluate the result using
Definitions \ref{def:Apm2}, \ref{def:ydd}
and \eqref{eq:AsAct}  along with  Lemmas  \ref{lem:RLact},  \ref{lem:nxnz2} and Theorem  \ref{lem:U2module2}. This yields the first three equations.
 To get the fourth equation, use  \eqref{eq:AAA2}.
\end{proof}

\begin{lemma} \label{lem:Actionyuu} For $y \in X$ we have
\begin{align*}
A^* y^{\uparrow \uparrow} &=   q^{-{\rm dim}\,y}  y^{\uparrow \uparrow} + (q-1) \varphi^{-1} q^{-2\,{\rm dim}\,y-1} \sum_{z \,{\rm covers}\,y} z^{\uparrow \uparrow}, \\
A_+ y^{\uparrow \uparrow} & = \frac{q^{{\rm dim}\,y}}{q-1} y^{\uparrow \uparrow},\\
A_- y^{\uparrow \uparrow} &= \frac{q^{N-{\rm dim}\,y}}{q-1} y^{\uparrow \uparrow} -\varphi q^{{\rm dim}\,y-1} \sum_{y \,{\rm covers}\,z} z^{\uparrow \uparrow},\\
A y^{\uparrow \uparrow} &= \frac{\varphi q^{{\rm dim}\,y}-q^{N-{\rm dim}\,y}}{q-1} y^{\uparrow \uparrow} +\varphi q^{{\rm dim}\,y-1} \sum_{y \,{\rm covers}\,z} z^{\uparrow \uparrow} .
\end{align*}
\end{lemma}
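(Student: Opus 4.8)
The plan is to mirror exactly the proof of Lemma~\ref{lem:Actionydd}, working with the second $U_{q^{1/2}}(\mathfrak{sl}_2)$-module structure on $V$ from Theorem~\ref{lem:U2module2} rather than the first one. Recall that $y^{\uparrow\uparrow} = {\rm exp}_{q^{1/2}}(-\varphi^{-1}RA^*)\,\hat y$, and that by Lemma~\ref{lem:nxnz2} the operator $n_x$ acts as $-\varphi^{-1}RA^*$ on the module of Theorem~\ref{lem:U2module2}; hence $y^{\uparrow\uparrow} = {\rm exp}_{q^{1/2}}(n_x)\,\hat y$ on that module. So the three identities in Lemma~\ref{eq:expE} (which relate $\mathcal X$, $\mathcal Y$, $\mathcal Z$ to conjugation by ${\rm exp}_{q^{1/2}}(n_x)$) are precisely the tool needed.

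First I would apply each of the three equations in Lemma~\ref{eq:expE} to $\hat y$. Equation~(iii), $\mathcal Z\,{\rm exp}_{q^{1/2}}(n_x) = {\rm exp}_{q^{1/2}}(n_x)\,\mathcal Y^{-1}$, gives $\mathcal Z\, y^{\uparrow\uparrow} = {\rm exp}_{q^{1/2}}(n_x)\,\mathcal Y^{-1}\hat y = q^{{\rm dim}\,y}\, {\rm exp}_{q^{1/2}}(n_x)\,\hat y = q^{{\rm dim}\,y}\, y^{\uparrow\uparrow}$, using that $\mathcal Y^{-1}$ acts as $q^{-N/2}(A^*)^{-1}$ and $(A^*)^{-1}\hat y = q^{{\rm dim}\,y}\hat y$ by \eqref{eq:AsAct}. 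Since $\mathcal Z$ acts as $(q-1)q^{-N/2}A_+$ on this module, this yields $A_+ y^{\uparrow\uparrow} = \frac{q^{{\rm dim}\,y}}{q-1}y^{\uparrow\uparrow}$, the second displayed equation. Equation~(ii), $\mathcal Y\,{\rm exp}_{q^{1/2}}(n_x) = {\rm exp}_{q^{1/2}}(n_x)\,\mathcal Y\mathcal Z\mathcal Y$, gives $\mathcal Y\,y^{\uparrow\uparrow} = {\rm exp}_{q^{1/2}}(n_x)\,\mathcal Y\mathcal Z\mathcal Y\,\hat y$; I would evaluate $\mathcal Y\mathcal Z\mathcal Y\,\hat y$ using the explicit actions from Theorem~\ref{lem:U2module2}, Definition~\ref{def:Apm2} (to rewrite $A_+$ via $R$), \eqref{eq:AsAct}, and Lemma~\ref{lem:RLact} for $R\hat y$, then re-express the result in terms of the $z^{\uparrow\uparrow}$ basis; matching against the action of $\mathcal Y = q^{N/2}A^*$ produces the first displayed equation for $A^* y^{\uparrow\uparrow}$. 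Equation~(i), $\mathcal X\,{\rm exp}_{q^{1/2}}(n_x) = {\rm exp}_{q^{1/2}}(n_x)(\mathcal X + \mathcal Y - \mathcal Y^{-1})$, similarly gives the action of $\mathcal X = (q-1)q^{-N/2}A_-$, yielding the third displayed equation for $A_- y^{\uparrow\uparrow}$. Finally, the fourth equation follows by combining the second and third via $A = \varphi A_+ - A_-$ from \eqref{eq:AAA2}.

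The main obstacle, as in the companion lemmas, is purely computational bookkeeping: in evaluating $\mathcal Y\mathcal Z\mathcal Y\,\hat y$ and $(\mathcal X + \mathcal Y - \mathcal Y^{-1})\,\hat y$, the operator $R$ (hidden inside $A_+$) sends $\hat y$ to a sum $\sum_{z\,{\rm covers}\,y}\hat z$, and one must then correctly pull the $q$-power weights through ${\rm exp}_{q^{1/2}}(n_x)$ so that $\hat z$ becomes $z^{\uparrow\uparrow}$; this requires tracking the dimension-dependent prefactors carefully, together with the factor of $\varphi^{-1}$ coming from $A_+$ in Definition~\ref{def:Apm2}. The signs and the $\varphi^{\pm 1}$ placements are where errors creep in, so I would cross-check the final coefficients (e.g. the $(q-1)\varphi^{-1}q^{-2\,{\rm dim}\,y-1}$ in front of the sum for $A^* y^{\uparrow\uparrow}$) against the structural consistency required by \eqref{eq:UqOne2}, \eqref{eq:UqTwo2}, and against the analogous Lemma~\ref{lem:Actionyud} (which has the same shape with $\varphi$ replaced appropriately). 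No genuinely new idea is needed beyond what already appears in Lemmas~\ref{lem:Actionydd}--\ref{lem:Actionydu}; the proof is a transcription with $(A^+,A^-,n_z\text{-picture})$ replaced by $(A_+,A_-,n_x\text{-picture})$ and the appropriate $\varphi$-twists, so I would simply write "Similar to the proof of Lemma~\ref{lem:Actionyud}" and indicate the two changes (use Theorem~\ref{lem:U2module2} and Lemma~\ref{eq:expE} in place of Lemma~\ref{lem:Actionyud}'s inputs, and use \eqref{eq:AAA2} for the fourth equation).
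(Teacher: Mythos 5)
Your proposal follows the same route as the paper: work on the $U_{q^{1/2}}(\mathfrak{sl}_2)$-module from Theorem~\ref{lem:U2module2}, use Lemma~\ref{lem:nxnz2} to identify $y^{\uparrow\uparrow} = {\rm exp}_{q^{1/2}}(n_x)\hat y$, apply each equation of Lemma~\ref{eq:expE} to $\hat y$ and translate back via Theorem~\ref{lem:U2module2}, Definition~\ref{def:Apm2}, \eqref{eq:AsAct}, and Lemma~\ref{lem:RLact}, then obtain the fourth line from \eqref{eq:AAA2}. One small nit: your opening sentence says you will mirror Lemma~\ref{lem:Actionydd} (the $n_z$-picture, which requires inverting the exponential), but the body and your closing sentence correctly identify Lemma~\ref{lem:Actionyud} (the $n_x$-picture, applied directly) as the true template, which is exactly what the paper does.
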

\begin{proof} Similar to the proof of Lemma \ref{lem:Actionyud}.
To obtain the first three equations,
consider the $U_{q^{1/2}} (\mathfrak{sl}_2)$-module $V$ from Theorem \ref{lem:U2module2}. For each equation in Lemma \ref{eq:expE}, apply each side to $\hat y$, and evaluate the result using
Definitions \ref{def:Apm2}, \ref{def:ydd}
 and \eqref{eq:AsAct}  along with Lemmas  \ref{lem:RLact}, \ref{lem:nxnz2} and Theorem  \ref{lem:U2module2}. This yields the first three equations.
 To get the fourth equation, use  \eqref{eq:AAA2}.
\end{proof}

\begin{lemma}\label{lem:AAAAeigval}
Each of $A^-, A^+, A_-, A_+$ is diagonalizable with eigenvalues 
\begin{align*}
\frac{q^i}{q-1} \qquad \qquad (0 \leq i \leq N).
\end{align*}
\noindent For each matrix and $0 \leq i \leq N$, the eigenspace with eigenvalue $q^i/(q-1)$ has dimension $\binom{N}{i}_q$.
\end{lemma}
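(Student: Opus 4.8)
The plan is to read off the eigenvalues and eigenspace dimensions directly from the triangular actions computed in Lemmas \ref{lem:Actionydd}--\ref{lem:Actionyuu}. Consider first $A^-$. Lemma \ref{lem:Actionydd} gives $A^- y^{\downarrow \downarrow} = \frac{q^{{\rm dim}\,y}}{q-1} y^{\downarrow \downarrow}$ for every $y \in X$, so by Lemma \ref{lem:4basis} the vectors $\lbrace y^{\downarrow \downarrow}\rbrace_{y\in X}$ form an eigenbasis of $V$ for $A^-$. Hence $A^-$ is diagonalizable, and its eigenvalues are exactly the scalars $q^{{\rm dim}\,y}/(q-1)$ as $y$ ranges over $X$; these take each value $q^i/(q-1)$ $(0\le i\le N)$, and the multiplicity of $q^i/(q-1)$ equals the number of $y\in X$ with ${\rm dim}\,y=i$, which is $\binom{N}{i}_q$ by Lemma \ref{lem:size}. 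The same argument applies verbatim to the other three matrices: $A^+$ is diagonalized by $\lbrace y^{\uparrow \downarrow}\rbrace_{y\in X}$ with $A^+ y^{\uparrow \downarrow}=\frac{q^{{\rm dim}\,y}}{q-1}y^{\uparrow \downarrow}$ (Lemma \ref{lem:Actionyud}); $A_-$ is diagonalized by $\lbrace y^{\downarrow \uparrow}\rbrace_{y\in X}$ with $A_- y^{\downarrow \uparrow}=\frac{q^{{\rm dim}\,y}}{q-1}y^{\downarrow \uparrow}$ (Lemma \ref{lem:Actionydu}); and $A_+$ is diagonalized by $\lbrace y^{\uparrow \uparrow}\rbrace_{y\in X}$ with $A_+ y^{\uparrow \uparrow}=\frac{q^{{\rm dim}\,y}}{q-1}y^{\uparrow \uparrow}$ (Lemma \ref{lem:Actionyuu}).

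In more detail, for each of the four matrices the relevant lemma shows that a specific basis of $V$ from Lemma \ref{lem:4basis} consists of eigenvectors, with the eigenvector indexed by $y$ having eigenvalue $q^{{\rm dim}\,y}/(q-1)$. Since $V$ is spanned by these eigenvectors, each matrix is diagonalizable in the sense of the Preliminaries. Grouping the basis vectors by ${\rm dim}\,y$, the eigenspace for the eigenvalue $q^i/(q-1)$ is spanned by $\lbrace z^{(\cdot)} \mid {\rm dim}\,z=i\rbrace$ (with the appropriate superscript), and these vectors are linearly independent as part of a basis, so the eigenspace has dimension equal to the number of such $z$, namely $\binom{N}{i}_q$ by Lemma \ref{lem:size}. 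The eigenvalues $q^i/(q-1)$ for $0\le i\le N$ are mutually distinct since $q$ is not a root of unity, so these are precisely the eigenvalues and no others occur.

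There is essentially no obstacle here: the real work was done in Lemmas \ref{lem:Actionydd}--\ref{lem:Actionyuu}, where the $q$-exponential conjugation identities from Lemmas \ref{eq:expE}, \ref{eq:expE2} were used to compute the actions of $A^\pm, A_\pm$ on the four split bases, and the point of those computations was precisely that each of $A^+, A^-, A_+, A_-$ acts \emph{diagonally} on its corresponding split basis. The present lemma is a clean corollary: one simply invokes those diagonal actions together with the basis property (Lemma \ref{lem:4basis}) and the enumeration of vertices by dimension (Lemma \ref{lem:size}). If anything needs care, it is only the bookkeeping that the superscript $\downarrow\downarrow, \uparrow\downarrow, \downarrow\uparrow, \uparrow\uparrow$ is matched to the correct matrix $A^-, A^+, A_-, A_+$ respectively, and that one cites the diagonal (third) equation in each of the four lemmas rather than one of the triangular ones.
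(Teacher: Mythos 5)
Your proof is correct and takes precisely the same route as the paper: each of $A^-,A^+,A_-,A_+$ acts as a scalar on the corresponding split basis by the third (diagonal) equation in Lemmas \ref{lem:Actionydd}--\ref{lem:Actionyuu}, the split vectors form a basis for $V$ by Lemma \ref{lem:4basis}, and the multiplicity of $q^i/(q-1)$ is the count $\binom{N}{i}_q$ from Lemma \ref{lem:size}. The paper's proof is merely terser, citing exactly those three ingredients.
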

\begin{proof} For $A^-$, the result follows from Lemmas \ref{lem:size}, \ref{lem:4basis} and the $A^-$ action described in Lemma \ref{lem:Actionydd}.
For $A^+, A_-, A_+$ the proof is similar.
\end{proof} 

\begin{definition}\label{def:Ui} \rm For $A^-, A^+, A_-, A_+$ we name their eigenspaces as follows. For $0 \leq i \leq N$,
\begin{align*} 
\begin{tabular}[t]{c|cccc}
{\rm matrix }& $A^-$ & $A^+$ & $A_-$ & $A_+$
 \\
 \hline
 {\rm eigenspace for $q^i/(q-1)$} & $U_i^{\downarrow \downarrow} $ & $U_i^{\uparrow \downarrow}$ & $ U_i^{\downarrow \uparrow}$ & $U_i^{\uparrow \uparrow}$
    \end{tabular}
\end{align*}
\noindent For notational convenience, we define 
\begin{align*}
 U_{j}^{\downarrow \downarrow}=0, \qquad U_{j}^{\uparrow \downarrow}=0,  \qquad U_{j}^{\downarrow \uparrow}=0,  \qquad U_{j}^{\uparrow \uparrow}=0
\end{align*}
\noindent for $j=-1$ and $j=N+1$.
\end{definition}

\begin{lemma} \label{lem:Uds} The following sums are direct:
\begin{align*}
&V = \sum_{i=0}^N U_i^{\downarrow \downarrow}, \qquad \quad V = \sum_{i=0}^N U_i^{\uparrow \downarrow}, 
\qquad \quad V = \sum_{i=0}^N U_i^{\downarrow \uparrow}, \qquad \quad V = \sum_{i=0}^N U_i^{\uparrow \uparrow}.
\end{align*}
\end{lemma}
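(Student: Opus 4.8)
The statement to prove is Lemma \ref{lem:Uds}, which asserts that the four eigenspace decompositions $V = \sum_{i=0}^N U_i^{\downarrow\downarrow}$, etc., are direct sums.

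The plan is as follows. Each of $A^-, A^+, A_-, A_+$ is diagonalizable by Lemma \ref{lem:AAAAeigval}, and for a diagonalizable matrix the standard module decomposes as the direct sum of the eigenspaces. So in principle this lemma is an immediate corollary of Lemma \ref{lem:AAAAeigval} together with Definition \ref{def:Ui} and the definition of diagonalizability given in Section 2. Let me sketch this cleanly.

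First I would recall that, by the definition of diagonalizable in the Preliminaries, a matrix $B \in {\rm Mat}_X(\mathbb C)$ is diagonalizable whenever $V$ is spanned by the eigenspaces of $B$; and it is standard linear algebra that eigenspaces for distinct eigenvalues intersect trivially (more precisely, the sum of the eigenspaces is always direct), so "spanned by the eigenspaces" is equivalent to "$V$ is the direct sum of the eigenspaces." Then I would apply this to each of $A^-, A^+, A_-, A_+$ in turn, using Lemma \ref{lem:AAAAeigval} to know these are diagonalizable with eigenvalues $q^i/(q-1)$ for $0 \le i \le N$, and Definition \ref{def:Ui} to identify the eigenspace for $q^i/(q-1)$ with $U_i^{\downarrow\downarrow}$, $U_i^{\uparrow\downarrow}$, $U_i^{\downarrow\uparrow}$, $U_i^{\uparrow\uparrow}$ respectively. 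This gives the four direct-sum decompositions. There is essentially no obstacle here; the lemma is a bookkeeping consequence of the preceding results.

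Here is the proof:

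\begin{proof}
We first treat the sum $V = \sum_{i=0}^N U_i^{\downarrow \downarrow}$. By Lemma \ref{lem:AAAAeigval}, the matrix $A^-$ is diagonalizable, with eigenvalues $q^i/(q-1)$ for $0 \leq i \leq N$; by Definition \ref{def:Ui}, the eigenspace of $A^-$ for the eigenvalue $q^i/(q-1)$ is $U_i^{\downarrow \downarrow}$. Since $A^-$ is diagonalizable, $V$ is spanned by the eigenspaces $\lbrace U_i^{\downarrow \downarrow}\rbrace_{i=0}^N$, so $V = \sum_{i=0}^N U_i^{\downarrow \downarrow}$. Eigenvectors of $A^-$ for mutually distinct eigenvalues are linearly independent, so the sum $\sum_{i=0}^N U_i^{\downarrow \downarrow}$ is direct. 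The proof is similar for the sums $V = \sum_{i=0}^N U_i^{\uparrow \downarrow}$, $V = \sum_{i=0}^N U_i^{\downarrow \uparrow}$, $V = \sum_{i=0}^N U_i^{\uparrow \uparrow}$; in these cases use the matrices $A^+$, $A_-$, $A_+$ in place of $A^-$, respectively.
\end{proof}
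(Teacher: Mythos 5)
Your proof is correct and takes the same approach as the paper, which simply cites Lemma \ref{lem:AAAAeigval}, Definition \ref{def:Ui}, and linear algebra. You have merely spelled out the standard linear-algebra step (eigenspaces of a diagonalizable operator for distinct eigenvalues form a direct sum) that the paper leaves implicit.
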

\begin{proof} By Lemma \ref{lem:AAAAeigval}, Definition \ref{def:Ui}, and linear algebra.
\end{proof}

\begin{lemma} \label{lem:Ubasis} For $0 \leq i \leq N$, we give a basis for each subspace $U_i^{\downarrow \downarrow}, U_i^{\uparrow \downarrow}, U_i^{\downarrow \uparrow}, U_i^{\uparrow \uparrow}$ from
Definition \ref{def:Ui}:
\begin{align*} 
\begin{tabular}[t]{c|c}
{\rm subspace }& {\rm basis} \\
\hline
 $U_i^{\downarrow \downarrow}$ & $\lbrace y^{\downarrow \downarrow} \vert y \in X, \;{\rm dim}\,y=i\rbrace$ \\
$U_i^{\uparrow \downarrow}$ & $\lbrace y^{\uparrow \downarrow} \vert y \in X, \;{\rm dim}\,y=i\rbrace$ \\
$U_i^{\downarrow \uparrow}$ & $\lbrace y^{\downarrow \uparrow} \vert y \in X, \;{\rm dim}\,y=i\rbrace$ \\
$U_i^{\uparrow \uparrow}$ & $\lbrace y^{\uparrow \uparrow} \vert y \in X, \;{\rm dim}\,y=i\rbrace$ 
     \end{tabular}
\end{align*}
\end{lemma}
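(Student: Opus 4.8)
The plan is to identify, for each of the four subspaces, a collection of vectors that both spans the subspace and is linearly independent, namely the restriction of one of the four bases from Lemma~\ref{lem:4basis} to the vertices of a fixed dimension $i$. First I would recall from Lemma~\ref{lem:Actionydd} that $A^-$ acts on $y^{\downarrow\downarrow}$ as the scalar $q^{{\rm dim}\,y}/(q-1)$; hence every $y^{\downarrow\downarrow}$ with ${\rm dim}\,y=i$ lies in the eigenspace $U_i^{\downarrow\downarrow}$ (using Definition~\ref{def:Ui}). Linear independence of $\lbrace y^{\downarrow\downarrow}\vert {\rm dim}\,y=i\rbrace$ is inherited from the linear independence of the full basis $\lbrace y^{\downarrow\downarrow}\rbrace_{y\in X}$ established in Lemma~\ref{lem:4basis}. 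It remains to check that these vectors span $U_i^{\downarrow\downarrow}$, not merely sit inside it.

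For the spanning part I would count dimensions. By Lemma~\ref{lem:size} the number of vertices $y$ with ${\rm dim}\,y=i$ is $\binom{N}{i}_q$, so the proposed basis has exactly $\binom{N}{i}_q$ elements. By Lemma~\ref{lem:AAAAeigval} the eigenspace of $A^-$ for $q^i/(q-1)$ has dimension $\binom{N}{i}_q$. A linearly independent set of the right size inside a subspace is a basis of that subspace, so $\lbrace y^{\downarrow\downarrow}\vert y\in X,\ {\rm dim}\,y=i\rbrace$ is a basis for $U_i^{\downarrow\downarrow}$. The arguments for $U_i^{\uparrow\downarrow}$, $U_i^{\downarrow\uparrow}$, $U_i^{\uparrow\uparrow}$ are word-for-word the same, using respectively the $A^+$-action in Lemma~\ref{lem:Actionyud}, the $A_-$-action in Lemma~\ref{lem:Actionydu}, and the $A_+$-action in Lemma~\ref{lem:Actionyuu}, together with the corresponding bases from Lemma~\ref{lem:4basis} and the dimension count from Lemma~\ref{lem:AAAAeigval}.

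There is really no serious obstacle here; the only thing to be careful about is bookkeeping — making sure that in each of the four cases the relevant action lemma assigns eigenvalue $q^i/(q-1)$ precisely to the vectors indexed by vertices of dimension $i$ (as opposed to $N-i$ or some other index), which one reads off directly from the diagonal coefficient in Lemmas~\ref{lem:Actionydd}--\ref{lem:Actionyuu}. A clean way to write the proof is to do the $U_i^{\downarrow\downarrow}$ case in full and then say "the remaining three cases are proved in the same way," citing the appropriate action lemma in each. I expect this to be a short proof of a handful of lines.

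\begin{proof}
Fix $i$ with $0\leq i\leq N$. By Lemma~\ref{lem:Actionydd}, for each $y\in X$ with ${\rm dim}\,y=i$ we have $A^- y^{\downarrow\downarrow}=\tfrac{q^i}{q-1}y^{\downarrow\downarrow}$, so $y^{\downarrow\downarrow}\in U_i^{\downarrow\downarrow}$ by Definition~\ref{def:Ui}. The vectors $\lbrace y^{\downarrow\downarrow}\vert y\in X,\ {\rm dim}\,y=i\rbrace$ are linearly independent, being a subset of the basis $\lbrace y^{\downarrow\downarrow}\rbrace_{y\in X}$ of $V$ from Lemma~\ref{lem:4basis}. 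By Lemma~\ref{lem:size} this set has cardinality $\binom{N}{i}_q$, and by Lemma~\ref{lem:AAAAeigval} we have ${\rm dim}\,U_i^{\downarrow\downarrow}=\binom{N}{i}_q$. A linearly independent subset of $U_i^{\downarrow\downarrow}$ with $\binom{N}{i}_q$ elements is a basis for $U_i^{\downarrow\downarrow}$. The cases of $U_i^{\uparrow\downarrow}$, $U_i^{\downarrow\uparrow}$, $U_i^{\uparrow\uparrow}$ are handled in the same way, using the $A^+$-action from Lemma~\ref{lem:Actionyud}, the $A_-$-action from Lemma~\ref{lem:Actionydu}, and the $A_+$-action from Lemma~\ref{lem:Actionyuu}, respectively, together with Lemmas~\ref{lem:size}, \ref{lem:4basis}, \ref{lem:AAAAeigval} and Definition~\ref{def:Ui}.
\end{proof}
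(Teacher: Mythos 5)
Your proof is correct and takes essentially the same approach as the paper: read off from Lemmas~\ref{lem:Actionydd}--\ref{lem:Actionyuu} that the proposed vectors are eigenvectors with the right eigenvalue, then use the fact that they form part of a basis of $V$ (Lemma~\ref{lem:4basis}) to conclude. Your dimension-count step via Lemmas~\ref{lem:size} and~\ref{lem:AAAAeigval} is a slightly longer route than simply observing that a diagonalizing basis of $V$ partitions into eigenspace bases, but it is equally valid and uses only ingredients already present in the paper.
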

\begin{proof} By Lemmas \ref{lem:Actionydd}--\ref{lem:Actionyuu}
and
Definition \ref{def:Ui}.
\end{proof} 

\begin{lemma} \label{lem:AAAAsplit} For $0 \leq i \leq N$ we have
\begin{align*}
&(A- \theta_{N-i} I) U_i^{\downarrow \downarrow} \subseteq U_{i+1}^{\downarrow \downarrow}, \qquad \qquad (A^*- \theta^*_{i} I) U_i^{\downarrow \downarrow} \subseteq U_{i-1}^{\downarrow \downarrow},\\
&(A- \theta_{i} I) U_i^{\uparrow \downarrow} \subseteq U_{i-1}^{\uparrow \downarrow}, \qquad \qquad (A^*- \theta^*_{i} I) U_i^{\uparrow \downarrow} \subseteq U_{i+1}^{\uparrow \downarrow},\\
&(A- \theta_{i} I) U_i^{\downarrow \uparrow} \subseteq U_{i+1}^{\downarrow \uparrow}, \qquad \qquad (A^*- \theta^*_{i} I) U_i^{\downarrow \uparrow} \subseteq U_{i-1}^{\downarrow \uparrow},\\
&(A- \theta_{N-i} I) U_i^{\uparrow \uparrow} \subseteq U_{i-1}^{\uparrow \uparrow}, \qquad \qquad (A^*- \theta^*_{i} I) U_i^{\uparrow \uparrow} \subseteq U_{i+1}^{\uparrow \uparrow}.
\end{align*}
\end{lemma}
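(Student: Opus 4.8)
The plan is to verify each of the eight containments directly, working basis-vector by basis-vector, using the explicit actions of $A$ and $A^*$ recorded in Lemmas \ref{lem:Actionydd}--\ref{lem:Actionyuu} together with the bases for the subspaces $U_i^{\downarrow\downarrow},U_i^{\uparrow\downarrow},U_i^{\downarrow\uparrow},U_i^{\uparrow\uparrow}$ from Lemma \ref{lem:Ubasis}. Since these subspaces are spanned by the vectors $y^{\downarrow\downarrow},y^{\uparrow\downarrow},y^{\downarrow\uparrow},y^{\uparrow\uparrow}$ with ${\rm dim}\,y$ fixed, it suffices to apply $(A-\theta_? I)$ or $(A^*-\theta_i^* I)$ to one such spanning vector and observe where the result lands.

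First I would treat the $\downarrow\downarrow$ row. Fix $y\in X$ with ${\rm dim}\,y=i$. From Lemma \ref{lem:Actionydd}, $A^* y^{\downarrow\downarrow}=q^{-i}y^{\downarrow\downarrow}+(q-1)q^{-i}\sum_{y\,{\rm covers}\,z}z^{\downarrow\downarrow}$, so $(A^*-\theta_i^* I)y^{\downarrow\downarrow}=(q-1)q^{-i}\sum_{y\,{\rm covers}\,z}z^{\downarrow\downarrow}$; each $z$ here has dimension $i-1$, so the right side lies in $U_{i-1}^{\downarrow\downarrow}$ by Lemma \ref{lem:Ubasis}. Similarly, from the fourth equation of Lemma \ref{lem:Actionydd}, $A y^{\downarrow\downarrow}=\frac{\varphi q^{i}-q^{N-i}}{q-1}y^{\downarrow\downarrow}+\sum_{z\,{\rm covers}\,y}z^{\downarrow\downarrow}$; here the scalar coefficient of $y^{\downarrow\downarrow}$ is exactly $\theta_{N-i}$ by Definition \ref{def:theta} (note $\theta_{N-i}=\frac{\varphi q^{i}-q^{N-i}}{q-1}$), so $(A-\theta_{N-i}I)y^{\downarrow\downarrow}=\sum_{z\,{\rm covers}\,y}z^{\downarrow\downarrow}\in U_{i+1}^{\downarrow\downarrow}$. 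This disposes of the first row. The remaining three rows are handled the same way: in each case the diagonal coefficient appearing in the relevant equation of Lemma \ref{lem:Actionyud}, \ref{lem:Actionydu}, or \ref{lem:Actionyuu} is precisely the indicated eigenvalue $\theta_i$ or $\theta_{N-i}$ (for $A$) or $\theta_i^*=q^{-i}$ (for $A^*$), and subtracting it off leaves only the off-diagonal sum over covers, which by the dimension shift lands in $U_{i\pm 1}$ with the sign matching the direction in which $R$ or $L$ moves dimension. One must only be careful to match the correct case: for $A^*$ the off-diagonal term always involves $\sum_{z\,{\rm covers}\,y}$ in the $\uparrow$ bases and $\sum_{y\,{\rm covers}\,z}$ in the $\downarrow$... actually, reading Lemmas \ref{lem:Actionyud} and \ref{lem:Actionyuu}, the $A^*$ off-diagonal term is $\sum_{z\,{\rm covers}\,y}$, raising dimension, hence $U_{i+1}$; while in Lemmas \ref{lem:Actionydd}, \ref{lem:Actionydu} it is $\sum_{y\,{\rm covers}\,z}$, lowering dimension, hence $U_{i-1}$. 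For $A$, the direction similarly flips between the two pairs.

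The one point that needs genuine checking rather than mechanical reading is the identification of the scalar coefficients with the eigenvalues $\theta_i$, $\theta_{N-i}$: for the $\uparrow\downarrow$ and $\downarrow\uparrow$ bases the coefficient of $y$ in $Ay$ is $\frac{\varphi q^{N-i}-q^{i}}{q-1}=\theta_i$, while for the $\downarrow\downarrow$ and $\uparrow\uparrow$ bases it is $\frac{\varphi q^{i}-q^{N-i}}{q-1}=\theta_{N-i}$; a one-line appeal to Definition \ref{def:theta} settles this. I expect the main (and only real) obstacle to be bookkeeping: keeping straight which of the four bases goes with which direction of dimension shift and which of $\theta_i$ versus $\theta_{N-i}$ appears, so that the eight containments come out with exactly the sub/superscripts claimed. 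Since all the hard analytic content is already packaged in Lemmas \ref{lem:Actionydd}--\ref{lem:Actionyuu} and \ref{lem:Ubasis}, the proof itself is short: read off each equation, subtract the scalar, and invoke Lemma \ref{lem:Ubasis} for the target subspace.
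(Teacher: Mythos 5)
Your proposal is correct and takes exactly the same route as the paper, whose proof is the one-line citation ``By Lemmas \ref{lem:Actionydd}--\ref{lem:Actionyuu} and Lemma \ref{lem:Ubasis}.'' You have simply unpacked that citation: read off the $A$- and $A^*$-actions on the split basis vectors, recognize the diagonal coefficient as $\theta_{N-i}$ or $\theta_i$ (resp.\ $\theta_i^*$) via Definition \ref{def:theta}, and observe that what remains after subtracting it is a sum over covers, hence lies in the adjacent subspace by Lemma \ref{lem:Ubasis}.
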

\begin{proof}  By Lemmas \ref{lem:Actionydd}--\ref{lem:Actionyuu}
and 
Lemma \ref{lem:Ubasis}.
\end{proof}

\begin{lemma} \label{lem:3sum} The following {\rm (i)--(iv)} hold for $0 \leq i \leq N$.
\begin{enumerate}
\item[\rm (i)] These sums are equal:
\begin{align*}
E^*_0V + E^*_1V+\cdots + E^*_iV, \quad U_0^{\downarrow \downarrow} +U_1^{\downarrow \downarrow}+\cdots+U_i^{\downarrow \downarrow}, \quad
 U_0^{\downarrow \uparrow} +U_1^{\downarrow \uparrow}+\cdots+U_i^{\downarrow \uparrow}.
 \end{align*}
\item[\rm (ii)] These sums are equal:
\begin{align*}
E^*_NV + E^*_{N-1}V+\cdots + E^*_iV, \quad U_N^{\uparrow \downarrow} +U_{N-1}^{\uparrow \downarrow}+\cdots+U_i^{\uparrow \downarrow}, \quad
 U_N^{\uparrow \uparrow} +U_{N-1}^{\uparrow \uparrow}+\cdots+U_i^{\uparrow \uparrow}.
 \end{align*}
\item[\rm (iii)] These sums are equal:
\begin{align*}
E_0V + E_1V+\cdots + E_iV, \quad U_N^{\downarrow \downarrow} +U_{N-1}^{\downarrow \downarrow}+\cdots+U_{N-i}^{\downarrow \downarrow}, \quad
 U_0^{\uparrow \downarrow } +U_1^{\uparrow \downarrow }+\cdots+U_i^{\uparrow \downarrow}.
 \end{align*}
\item[\rm (iv)] These sums are equal:
\begin{align*}
E_NV + E_{N-1}V+\cdots + E_iV, \quad U_N^{\downarrow \uparrow} +U_{N-1}^{\downarrow \uparrow}+\cdots+U_i^{\downarrow \uparrow}, \quad
 U_0^{\uparrow \uparrow} +U_1^{\uparrow \uparrow}+\cdots+U_{N-i}^{\uparrow \uparrow}.
 \end{align*}
\end{enumerate}
\end{lemma}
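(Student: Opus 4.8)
The plan is to prove each of the four equalities in (i)--(iv) by establishing a chain of two inclusions, using the triangular actions recorded in Lemma~\ref{lem:AAAAsplit} together with the eigenspace decompositions from Lemma~\ref{lem:Uds}. The key general principle is this: if a diagonalizable operator $B$ has eigenspaces $\{W_j\}$ with $V=\sum_j W_j$, and a second operator $C$ satisfies $(C-\lambda_j I)W_j\subseteq W_{j+1}$ for a suitable indexing, then for each $i$ the partial sum $W_0+\cdots+W_i$ is invariant under $C$; and if moreover $C$ is diagonalizable with the right spectrum, that partial sum is a sum of eigenspaces of $C$. Conversely one compares dimensions using Lemma~\ref{lem:size}, Lemma~\ref{lem:Com2}, and the dimension formulas in Lemma~\ref{lem:AAAAeigval}, all of which give $\binom{N}{i}_q$-type counts, so a sum of $i+1$ consecutive eigenspaces in any of the relevant decompositions has the same dimension $\binom{N}{0}_q+\cdots+\binom{N}{i}_q$ (or the analogous telescoped quantity). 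Thus it suffices in each case to prove one inclusion and invoke equality of dimensions.

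Concretely, for (i): First I would show $E^*_0V+\cdots+E^*_iV=U_0^{\downarrow\downarrow}+\cdots+U_i^{\downarrow\downarrow}$. From Lemma~\ref{lem:AAAAsplit} we have $(A^*-\theta^*_iI)U_i^{\downarrow\downarrow}\subseteq U_{i-1}^{\downarrow\downarrow}$, so the subspace $U_0^{\downarrow\downarrow}+\cdots+U_i^{\downarrow\downarrow}$ is $A^*$-invariant; since $A^*$ acts on it with eigenvalues among $\theta^*_0,\dots,\theta^*_i$ (the triangular action forces the eigenvalues to be exactly these, reading off the diagonal), this subspace is contained in $E^*_0V+\cdots+E^*_iV$. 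Both sides have dimension $\sum_{j=0}^i\binom{N}{j}_q$ by Lemma~\ref{lem:size} and Lemma~\ref{lem:AAAAeigval}, so they are equal. The same argument with $A_-$ in place of $A^-$ and Lemma~\ref{lem:Actionydu} (i.e.\ the third row of Lemma~\ref{lem:AAAAsplit}) gives $E^*_0V+\cdots+E^*_iV=U_0^{\downarrow\uparrow}+\cdots+U_i^{\downarrow\uparrow}$. Parts (ii)--(iv) are handled identically: in (ii) one uses the $A^*$-action on the $\uparrow$ bases, noting the triangular shift goes the other way so one sums from the top; in (iii) and (iv) one uses the $A$-action rather than the $A^*$-action, replacing $E^*_jV$ by $E_jV$, $\theta^*_j$ by $\theta_j$, and keeping track of whether $A$ shifts the index up or down on the given basis (Lemma~\ref{lem:AAAAsplit} records $A-\theta_{N-i}$ lowering on $U_i^{\downarrow\downarrow}$ and $U_i^{\uparrow\uparrow}$, and $A-\theta_i$ raising on $U_i^{\uparrow\downarrow}$ and $U_i^{\downarrow\uparrow}$), which dictates whether the $E_jV$ sum runs from $j=0$ or from $j=N$.

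The main obstacle, such as it is, is bookkeeping: one must carefully match the direction of the triangular shift on each of the four split bases with the ascending-versus-descending order in which the $E^*_jV$ (resp.\ $E_jV$) are summed, and make sure the eigenvalue read off the diagonal of the triangular matrix is the claimed $\theta^*_j$ or $\theta_{N-j}$ etc. The substantive content has already been done in Lemmas~\ref{lem:Actionydd}--\ref{lem:Actionyuu} and repackaged in Lemma~\ref{lem:AAAAsplit}; what remains is to observe that a flag of $C$-invariant subspaces of the stated dimensions, inside an operator $C$ whose spectrum and eigenspace dimensions are known, must coincide with the standard flag built from $C$'s eigenspaces. I would state this once as a short linear-algebra observation and then apply it twelve times (three equal sums in each of four parts, so two inclusions-plus-dimension-count arguments per part).
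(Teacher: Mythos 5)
Your proposal is correct, and the overall strategy (show one inclusion, then conclude by comparing dimensions) is exactly the paper's. The only real divergence is in how the inclusion is obtained for parts (i) and (ii). The paper reads the inclusion
$U_0^{\downarrow\downarrow}+\cdots+U_i^{\downarrow\downarrow}\subseteq E^*_0V+\cdots+E^*_iV$ directly off the explicit split-basis vectors: by Lemma~\ref{lem:yymeaning} the vector $y^{\downarrow\downarrow}$ is a combination of $\hat z$ with $z\leq y$, so if $\dim y\leq i$ it visibly lies in $E^*_0V+\cdots+E^*_iV$, and similarly for $y^{\downarrow\uparrow}$ and for the $\uparrow$ vectors in part (ii). You instead derive the same inclusions from the triangular shifts in Lemma~\ref{lem:AAAAsplit}, i.e.\ from $A^*$-invariance together with a spectrum constraint. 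That is precisely the mechanism the paper itself uses in (iii) and (iv), where no explicit formula for an $E_j$-eigenvector is available: there the paper forms $P_i=(A-\theta_0 I)\cdots(A-\theta_i I)$ and observes $P_i$ annihilates the partial $U$-sums, which is the annihilator version of your ``invariant subspace with eigenvalues among $\theta_0,\dots,\theta_i$'' argument. So your proof unifies all four parts under a single mechanism, at the cost of not exploiting the extra information available in (i),(ii); both routes are valid and of comparable length.

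One small slip worth fixing: in your statement of the ``general principle'' you wrote that $(C-\lambda_jI)W_j\subseteq W_{j+1}$ makes $W_0+\cdots+W_i$ invariant under $C$. With that shift direction it is $W_i+W_{i+1}+\cdots$ that is $C$-invariant; for $W_0+\cdots+W_i$ you need the shift to go down, $(C-\lambda_jI)W_j\subseteq W_{j-1}$. Your concrete application to part (i) uses the correct (downward) direction, so this is only a typo in the general formulation, but it should be corrected so the principle matches its uses across all four parts.
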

\begin{proof} (i) The three sums have the same dimension, by  \eqref{eq:ctwo} and Lemmas \ref{lem:AAAAeigval}, \ref{lem:Uds} along with Definition \ref{def:Ui}. The last two sums are contained in the first sum, by Lemmas  \ref{lem:yymeaning}, \ref{lem:Ubasis}.
By these comments, the three sums are equal. \\
\noindent (ii) Similar to the proof of (i). \\
\noindent (iii) The three sums have the same dimension, by Lemmas \ref{lem:Com2}, \ref{lem:AAAAeigval}, \ref{lem:Uds} and Definition \ref{def:Ui}. We show that the last two sums are contained in the first sum.
Let $S_i^{\downarrow \downarrow}$ (resp. $S_i^{\uparrow \downarrow}$) denote the second sum (resp. third sum). Define 
\begin{align*}
P_i = (A-\theta_0 I) (A - \theta_1 I ) \cdots (A-\theta_i I).
\end{align*}
\noindent By linear algebra,
\begin{align*}
E_0V+ E_1V+\cdots + E_iV = \lbrace v \in V \vert P_iv=0\rbrace.
\end{align*}
By Lemma \ref{lem:AAAAsplit} we obtain $P_i S_i^{\downarrow \downarrow} =0$ and $P_i S_i^{\uparrow \downarrow} =0$. By these comments,
\begin{align*}
S_i^{\downarrow \downarrow}, S_i^{\uparrow \downarrow} \subseteq E_0V+E_1V+\cdots+E_iV.
\end{align*}
We have shown that the last two sums are contained in the first sum.
It follows that the three sums are equal. \\
\noindent (iv) Similar to the proof of (iii).
\end{proof}

\begin{theorem} \label{lem:ECE} For $0 \leq i \leq N$ we have
\begin{align*}
U_i^{\downarrow \downarrow} &= (E^*_0V+E^*_1V+\cdots + E^*_iV) \cap (E_0V+E_1V+\cdots+E_{N-i}V), \\
U_i^{\uparrow \downarrow} &= (E^*_NV+E^*_{N-1}V+\cdots + E^*_iV) \cap (E_0V+E_1V+\cdots+E_{i}V), \\
U_i^{\downarrow \uparrow} &= (E^*_0V+E^*_1V+\cdots + E^*_iV) \cap (E_NV+E_{N-1}V+\cdots+E_{i}V), \\
U_i^{\uparrow \uparrow} &= (E^*_NV+E^*_{N-1}V+\cdots + E^*_iV) \cap (E_NV+E_{N-1}V+\cdots+E_{N-i}V).
\end{align*}
\end{theorem}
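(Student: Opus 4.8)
The plan is to derive the theorem as a purely formal consequence of Lemma \ref{lem:3sum}, Lemma \ref{lem:Uds}, and one elementary fact about direct sums: if $V = W_0 \oplus W_1 \oplus \cdots \oplus W_N$, then for $0 \le i \le N$ one has
\[
(W_0 + W_1 + \cdots + W_i) \cap (W_i + W_{i+1} + \cdots + W_N) = W_i .
\]
This is immediate: a vector in the intersection has $W_j$-components supported on $\{0,\dots,i\}$ and also on $\{i,\dots,N\}$, hence only on $\{i\}$.

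First I would treat $U_i^{\downarrow \downarrow}$. Take $W_j = U_j^{\downarrow \downarrow}$ for $0 \le j \le N$; by Lemma \ref{lem:Uds} this is a direct sum decomposition of $V$, so the displayed identity applies. By Lemma \ref{lem:3sum}(i) the first partial sum in the theorem, $E^*_0V + \cdots + E^*_iV$, equals $U_0^{\downarrow \downarrow} + \cdots + U_i^{\downarrow \downarrow}$. By Lemma \ref{lem:3sum}(iii), applied with $N-i$ in place of $i$, the second partial sum, $E_0V + \cdots + E_{N-i}V$, equals $U_i^{\downarrow \downarrow} + U_{i+1}^{\downarrow \downarrow} + \cdots + U_N^{\downarrow \downarrow}$. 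Intersecting these two subspaces and invoking the direct-sum identity yields exactly $U_i^{\downarrow \downarrow}$.

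The other three identities are obtained the same way, each time choosing $\{W_j\}$ to be the relevant family of split eigenspaces (Lemma \ref{lem:Uds} supplies the direct-sum hypothesis in each case) and reading off the two partial-sum identities from Lemma \ref{lem:3sum}: for $U_i^{\uparrow \downarrow}$ use parts (ii) and (iii); for $U_i^{\downarrow \uparrow}$ use parts (i) and (iv); for $U_i^{\uparrow \uparrow}$ use parts (ii) and (iv), the latter applied with $N-i$ in place of $i$ so that $E_NV + \cdots + E_{N-i}V$ becomes $U_0^{\uparrow \uparrow} + \cdots + U_i^{\uparrow \uparrow}$. In every case the two partial sums appearing in the theorem translate into $W_0 + \cdots + W_i$ and $W_i + \cdots + W_N$ for the appropriate family, and the direct-sum identity closes the argument. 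The only point needing any care is the bookkeeping with the index reflections $i \leftrightarrow N-i$ in parts (iii) and (iv) of Lemma \ref{lem:3sum}; beyond that there is no genuine obstacle, the theorem being essentially a restatement of Lemma \ref{lem:3sum}.
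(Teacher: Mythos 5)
Your proposal is correct and follows the same approach as the paper: invoke Lemma \ref{lem:Uds} for the direct-sum hypothesis, translate each pair of partial sums via Lemma \ref{lem:3sum} into consecutive blocks of the relevant split decomposition, and apply the elementary fact that in a direct sum the intersection of the lower tail through index $i$ and the upper tail from index $i$ is the $i$th summand. The only difference is cosmetic: the paper writes out just the $U_i^{\downarrow\downarrow}$ case and says the rest are similar, whereas you carry out the index bookkeeping (including the $i\leftrightarrow N-i$ substitution in parts (iii) and (iv)) for all four.
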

\begin{proof} We verify the first equation. By Lemma  \ref{lem:Uds} and Lemma \ref{lem:3sum}(i),(iii) we have
\begin{align*} 
U_i^{\downarrow \downarrow} &= (U_0^{\downarrow \downarrow}+U_1^{\downarrow \downarrow}+\cdots + U_i^{\downarrow \downarrow}) \cap 
(U_i^{\downarrow \downarrow}+U_{i+1}^{\downarrow \downarrow}+\cdots + U_N^{\downarrow \downarrow}) \\
&=
(E^*_0V+E^*_1V+\cdots + E^*_iV) \cap (E_0V+E_1V+\cdots + E_{N-i}V).
\end{align*}
The other three equations are similarly verified.
\end{proof}

\noindent The equations in Theorem \ref{lem:ECE} resemble  \cite[Definition~3.1]{kim1} and \cite[Definition~5.1]{ds}. For this reason, we call the sums in Lemma \ref{lem:Uds}
the {\it split decompositions of $V$} with respect to $A$ and the vertex $\bf 0$. We also call the four bases in Lemma \ref{lem:4basis} the {\it split bases for $V$} with respect to $A$ and the vertex $\bf 0$.
\medskip

\section{Acknowledgement} 
\noindent The author thanks Roghayeh Maleki and Kazumasa Nomura for reading the manuscript carefully and offering helpful comments.


\bigskip


\noindent Paul Terwilliger \hfil\break
\noindent Department of Mathematics \hfil\break
\noindent University of Wisconsin \hfil\break
\noindent 480 Lincoln Drive \hfil\break
\noindent Madison, WI 53706-1388 USA \hfil\break
\noindent email: {\tt terwilli@math.wisc.edu }\hfil\break

\section{Statements and Declarations}

\noindent {\bf Funding}: The author declares that no funds, grants, or other support were received during the preparation of this manuscript.
\medskip

\noindent  {\bf Competing interests}:  The author  has no relevant financial or non-financial interests to disclose.
\medskip

\noindent {\bf Data availability}: All data generated or analyzed during this study are included in this published article.

\end{document}